\numberwithin{equation}{section}
	\newcommand{\vare}{\varepsilon}
	\newcommand{\N}{{\mathbb N}}
	\newcommand{\R}{{\mathbb R}}
	\newcommand{\F}{{\mathcal F}}
	\def\Vect#1{\mbox{\boldmath$#1$} }
	\def\pd#1#2{\dfrac{\partial#1}{\partial#2}}
	\def\spd#1#2{\frac{\partial#1}{\partial#2}}
	\renewcommand{\labelenumi}{\rm (\theenumi)}
	\def\Vect#1{\mbox{\boldmath$d$} }
	\DeclareMathOperator{\supp}{supp}
	\newcommand{\Norm}[2]{{\left\|#1\right\|_#2}}
	\newtheorem{thm}{Theorem}
	\newtheorem{lemma}{Lemma}
	\newtheorem*{lemmaA}{Lemma A}
	\newtheorem*{lemmaB}{Lemma B}
	\newtheorem*{lemmaC}{Lemma C}
	\newtheorem*{lemmaD}{Lemma D}
	\newtheorem*{lemmaE}{Lemma E}
	\newtheorem*{CorA}{Corollary A}
	\newtheorem{prop}{Proposition}
	\newtheorem{Cor}{Corollary}
	\theoremstyle{remark}
	\newtheorem{notation}{{\sc Notation}}
	\newtheorem{remark}{\rm\sc Remark}
\title[]
{Asymptotic profile of solutions \\
	for semilinear wave equations \\
	with structural damping 
}
\author[Taeko Yamazaki]{Taeko Yamazaki}
\address{Department of Mathematics, Faculty of Science and Technology, \\
	Tokyo University of Science,\\
	 Chiba, 278-8510, Japan}
\email{yamazaki$\_$taeko@ma.noda.tus.ac.jp}
\begin{document}
		
\begin{abstract}
This paper is concerned with the initial value problem for semilinear wave equation with structural damping 
$u_{tt}+(-\Delta)^{\sigma}u_t -\Delta u =f(u)$, 
where $\sigma \in (0,\frac{1}{2})$ 
and	$f(u) \sim  |u|^p$ or $u |u|^{p-1}$ with $p> 1 + {2}/(n - 2 \sigma)$.  
We first show the global existence for initial data small in some weighted Sobolev spaces on $\R^n$ ($n \ge 2$).  
Next, we show that the asymptotic profile of the solution above is given by  
a constant multiple of the fundamental solution of the corresponding parabolic equation, provided the initial data belong to weighted $L^1$ spaces.   
\end{abstract}
	
\thanks{This work is supported in part by Grant-in-Aid for Scientific Research (C) 17K05338 of JSPS}

\keywords{Semilinear wave equation; Structural damping; Asymptotic profile; Diffusion phenomena}

\subjclass{primary 35L70, secondary 35L15, 35B40}

\maketitle
			
\section{Introduction}
				
In this paper, we consider 
the unique global existence of solutions and diffusion phenomina for
	 the Cauchy problem of the semilinear wave equation with structural damping 
	 (damping term depends on the frequency) for $\sigma \in (0,\frac{1}{2})$: 	 
		\begin{eqnarray}\label{NW}
		\begin{cases}
		u_{tt} -\Delta u +(-\Delta)^{\sigma}u_t = f(u),
		&  t \geq 0,\; x\in \mathbb{R}^n,
		\\
		u(0,x)=u_0(x), \quad
		u_t(0,x)=u_1(x), & x\in \mathbb{R}^n, 
			\end{cases}
		\end{eqnarray}			
where $f \in C^{[\bar{s}],1 }(\R)$ ($1 \le \bar{s},\; [\bar{s}] < p$) satisfies   
\begin{equation}\label{fass}
\begin{cases}
&|\dfrac{d^j}{du^j} f(u) | \le C |u|^{p - j} \quad(0 \le j \le [\bar{s}]),
\\
&|\dfrac{d^j}{du^j} (f(u) - f(v)) | \le C |u - v|(|u| + |v|)^{p - [\bar{s}] } 
\quad( j = [\bar{s}] ),
\end{cases}
\end{equation} 	
for a positive constant $C$. Here, $[\bar{s}]$ denotes the integer part of $\bar{s}$.  		

For linear wave equations with structural damping: 
	\begin{eqnarray}\label{LW}
\begin{cases}
u_{tt}+(-\Delta)^{\sigma}u_t -\Delta u = 0,
&  t \geq 0,\; x\in \mathbb{R}^n,
\\
u(0,x)=u_0(x), \quad
u_t(0,x)=u_1(x), & x\in \mathbb{R}^n, 
\end{cases}
\end{eqnarray}
with $\sigma \in (0,\frac{1}{2})$, 
Narazaki and Reissig \cite{NR} gave some $L^p-L^q$ ($1 \le p \le q \le \infty$) estimates of the solutions. 
D'Abbicco and Ebert \cite{AE1} 
 showed the diffusion phenomena,  
by giving the $L^p-L^q$ decay estimates     
of the difference between the low frequency part of the solution of \eqref{LW} and that of the corresponding parabolic equation
\begin{equation}\label{H}
v_t + (-\Delta)^{1 -\sigma} v = 0,
\quad   t \geq 0,\; x\in \mathbb{R}^n,
\end{equation}
with initial data $(-\Delta)^{\sigma}u_0 + u_1$.  
Ikehata and Takeda \cite{IT} 
showed that a constant multiple of the fundamental solution of the  parabolic equation \eqref{H} gives the asymptotic profile of the solutions of \eqref{LW} with $(u_0, u_1)  \in ( L^1 \cap H^1) \times (L^1 \cap L^2)$ (see Remark \ref{ITremark}).  
		
For semilinear structural damped wave equation  
\eqref{NW} with $\sigma \in (0, \frac{1}{2})$, D'Abbicco and Reissig \cite{D1} first showed global existence and decay estimates of the solution  of   
\eqref{NW} with small initial data for space dimension $1 \le n \le 4$ and $p \in [2,n/[n-2]_+]$ such that 
	\begin{equation}
	\label{pass0}
	p > p_{\sigma}:= 1 + \frac{2}{n - 2\sigma}.   
	\end{equation}
	They showed the results by using $(L^1 \cap L^2)-L^2$ estimates of solutions of  the linear wave equation with structural damping \eqref{LW}.      
 		\cite{D1} considers also for $\sigma \in [\frac{1}{2},1]$ and shows that $p_\sigma$ is critical in the case $\sigma = 1/2$.   
Using the $L^p-L^q$ decay estimate ($1 \le p \le q \le \infty$) of solutions of 
the linear wave equations with structural damping \eqref{LW} by \cite{AE1} for low frequency part,  
D'Abbicco and Ebert \cite{AE3} (see also \cite{AE2}) showed the unique existence of  solutions of \eqref{NW} for small initial data  in some Sobolev spaces and gave the decay estimates of the solutions, in the following two cases:
\begin{equation}\label{AEass1}
p_\sigma < p, \quad n < 1 + 2 \max \left\{m \in \N; m < \frac{1+ 2 \sigma}{1 - 2\sigma} \right\}, \quad 
\end{equation}
or
\begin{equation}\label{AEass2}
p_\sigma < p < 1 + \frac{2(1 + 2\sigma)}{[n - 2(1 +2\sigma)]_+}, \quad
\left[\frac{n}{2} \right]\left[ \frac{2}{p} - 1 \right]_+(1 - 2 \sigma)
< 1 + 2\sigma. 
 \end{equation}
In \cite{AE3}, they also treated the case where $-\Delta u$ is replaced by $(- \Delta)^\delta u$ with $\delta > 0$. 
	
The assumption \eqref{AEass1} and \eqref{AEass2} for $p < 2$ restrict the space dimension from above. 
The first purpose of this paper is to remove restriction of the space dimension $n$ from above   
for every $\sigma \in (0, \frac{1}{2})$. 	

The second purpose is to give the asymptotic profile of the solutions of \eqref{NW} as $t \to \infty$, if small initial data belongs to some weighted 
$L^1$ spaces.    		
We show that a constant multiple of the fundamental solution of the parabolic equation \eqref{H} gives the asymptotic profile of \eqref{NW} (Theorem \ref{thmdiff}).  
As as far as the author knows, there seems to be no results on the asymptotic profile for semilinear wave equation with structural damping \eqref{NW} for $\sigma \in (0,\frac{1}{2})$.  

In the case $\sigma = 0$, the asymptotic profile for semilinear damped wave equation is investigated. 
Since we treat nonlinear term not necessarily absorbing, we only refer to the results for non-absorbing type nonlinear  term. Then if $1 < p \le p_0$ where 
\begin{equation*}
p_0 := 1 + \frac{2}{n} \ :\text{Fujita Exponent},    
\end{equation*}
then 
the solution of the semilinear damped wave equation blows up when $f(u) = |u|^p$ and the integrals of initial data on $\R^n$ are positive 
(see \cite{LZ, ToYo, Zh, IO} ).   
On the other hand, in the case $p > p_0$, small data global existence is widely studied, (see \cite{Ma, NO, ToYo, N1, Hy, HO, Na, ITani, NN, IIW}, for example, 
and the references therein).   
The asymptotic profiles of the solutions are obtained as follows.    
Galley and Raugel \cite{GL} ($n = 1$), Hosono and Ogawa \cite{HO} ($n = 2$), 
 showed that the asymptotic profile of the solutions  
is given by a constant multiple of the heat kernel $G(t,x)$, provided the initial data belong to some Sobolev spaces. 
(See also Kawakami and Takeda \cite{KT} for higher order asymptotic expansion in the case $n \le 3$.) 
For general space dimensions,   
Hayashi, Kaikina and Naumkin \cite{Hy} 
proved the unique existence of global solution $u \in C([0,\infty);H^{\bar{s}} \cap H^{0,\delta})$ 
for small initial data belonging to some weighted $L^1$ spaces, 
and showed that a constant multiple of the heat kernel  gives 
the asymptotic profile of the solutions (see Remark \ref{KarchHayashi}).   

We consider the equation in weighted Sobolev spaces as in \cite{Hy}. 
The high frequency part of the structural damped wave equation has a good regularizing property.  
However, unlike the damped wave equation ($\sigma = 0$), the Fourier transform of the kernel of the linear structural damped wave equation is singular at the origin. 
This fact causes the difficulty when we treat the equation in weighted Sobolev spaces.
To get around this difficulty, we estimate the low frequency part in a new way employing Lorentz spaces (Lemma \ref{weightlemma}).  
For the estimate of nonlinear term, we use the method of \cite{Hy, IIW}.  

\medskip

This paper is organized as follows.  
\begin{itemize}
	\item In section 2, we list some notations and state main results.  
	\item In section 3, we list known preliminary lemmas.  
	\item In section 4, we estimate kernels.  
	\item In section 5, we prove Theorem 1. 
	\item In section 6, we estimate a nonlinear term. 
	\item In section 7, we estimate a convolution term.  
	\item In section 8, we prove Proposition 1 and Theorems 2 and 3.  That is, we prove the global existence of the solution of semilinear wave equation with structural damping, and give the asymptotic profile of the solutions. 
\end{itemize}
	
	\section{Main Results}			
	
	Before stating our results, we list some notations.  

	\begin{notation} We write  $\varphi(x) \lesssim \psi(x)$ on $I$ if there exists a positive constant $C$ such that
			\begin{align*}
			&\ \varphi(x) \leq C\psi(x) \; \text{for every} \; x \in I.  
			\end{align*}			
			We write  $\varphi(x) \sim \psi(x)$ on $I$, if $\varphi(x) \lesssim \psi(x)$ and 
			$\psi(x) \lesssim \varphi(x)$ on $I$. 			
		\end{notation}
	
	\begin{notation}
		For $a \in \R$, $[a]_+: = \max \{a, 0\}$. 
		\end{notation}
	
\begin{notation}
For every $q \in [1,\infty]$, we abbreviate $\R^n$ in $L^q(\R^n)$, and $L^q$ norm  is denoted by $\|\cdot\|_q$. 
\end{notation}

\begin{notation}
Let	$ H^{s,\delta} = H^{s,\delta}(\R^n)$ denote the weighted Sobolev space equipped with the norm
	\begin{equation*}
	\| u \|_{{H^{s,\delta}}}  = \| \langle  x \rangle^\delta ( 1 - \Delta)^{s/2} \|_{{L^2}}. 
	\end{equation*}	
$H^{s,0}$ equals $H^s$.  
Let	$ \dot H^{s} = \dot H^{s}(\R^n)$ denote the homogeneous Sobolev space equipped with the norm
\begin{equation*}
\| u \|_{{\dot H^{s}}}  = \|  ( - \Delta)^{s/2} \|_{{L^2}}. 
\end{equation*}		
\end{notation}

			\begin{notation}[see {\cite[section 1.3]{BL}}, for example]
		Let $q \in (1,\infty)$ and $r \in [1,\infty]$.  
		Let $\mu$ be the Lebesgue measure on $\R^n$.  
		The distribution function $ m(\tau, \varphi)$ is defined by 
		$$
		m(\tau, \varphi) := \mu(\{x; |\varphi(x)| > \tau \}). 
	$$
	The Lorentz space $L_{q,r}$ consists of   all locally integrable function $\varphi$ on $\R^n$ such that
	\begin{align*}
&	\| \varphi \|_{{q,r}} 
:= \left(\int_0^\infty (t^{1/q} \varphi^*(t)^r \frac{dt}{t}\right)^{1/r} < \infty
	\quad \text{when} \quad r  < \infty,
	\\
&	\| \varphi \|_{{q,\infty}} 
	:= \sup_t t^{1/q} \varphi^*(t)   = \sup_\tau \tau m(\tau,\varphi)^{1/q}< \infty,	
	\end{align*}
	where 
	$
	\varphi^*(t) = \inf \{\tau; m(\tau,\varphi) \le t \} 
	$ (the rearrangement of $\varphi$).   
		\end{notation}

				\begin{notation}
			For $\kappa \in (0,n)$,  Riesz potential is the operator
				\begin{eqnarray*}
					I_{\kappa}f(x) := \frac{1}{|x|^{n-\kappa}}* f 
				=	\int_{\mathbb{R}^n} \frac{f(y)}{|x-y|^{n-\kappa}} dy
				=  C_{n,\kappa} \mathcal{F}^{-1}(|\xi|^{- \kappa} \hat f(\rho))	. 
				\end{eqnarray*}
			\end{notation} 
			
First we give the asymptotic profile of the solutions to linear wave equation with structural damping.  
			
\begin{thm} \label{thm_lin_diff} 
Let $n \ge 1$. 
Let $(u_0, u_1)  \in ( L^1 \cap L^2) \times (L^1 \cap H^{-2 \sigma})$ 
such that $|\cdot|^{\theta_j} u_j \in L^1$ with $\theta_j \in [0,1]$ $(j = 0,1)$.  
Let $u \in C([0,\infty); H^1) \cap C^1((0,\infty);L^2)$ be a unique global solution of  
\eqref{LW}.  				
	Then the following holds. 
			\begin{equation}
	\begin{aligned}\label{lin_diff}
		&	\| u(t,\cdot) - \vartheta_0 H_{\sigma}(t, \cdot) 
	-	\vartheta_1 G_{\sigma}(t, \cdot) 	  \|_2 
	\\
	&	\lesssim
	\langle t \rangle 
	^{{
			\frac{1}{1 - \sigma}\left(-\frac{n}{4} +  2\sigma - 1 \right)
		 }}
	\|u_0\|_1
	+	\langle t \rangle 
	^{{\max \{
			\frac{1}{1 - \sigma}\left(-\frac{n}{4} +  3\sigma - 1 \right),
			\frac{1}{\sigma}		 \left(-\frac{n}{4} +  \sigma \right) 
			\} }}
	\| u_1 \|_{{{1}}} 
	\\
	& \quad	 + 
	e^{-\vare_{\sigma} t} (\| u_0 \|_2+  \|u_1 \|_{{H^{-2 \sigma} }})
	\\
	& \quad +	t ^{ \frac{1}{1-\sigma}(-\frac{n}{4} -\frac{\theta_0}{2} )}
	\| | \cdot |^{\theta_0}  u_0 \|_1 
	+ t ^{ \frac{1}{1-\sigma}(-\frac{n}{4} + \sigma -\frac{\theta_1}{2} )}\| | \cdot |^
	{\theta_1}  u_1 \|_1,
	\end{aligned}
	\end{equation}
	where
	\begin{align}
H_{\sigma}(t, x) &:=\F^{-1}[e^{- |\xi|^{2(1 - \sigma)} t}](x), 
\nonumber	\\
		G_{\sigma}(t, x) &:=\F^{-1}[|\xi|^{-2 \sigma} e^{- |\xi|^{2(1 - \sigma)} t}](x)
	=  C_{n,2 \sigma}^{-1} I_{2\sigma} H_{\sigma}(t, x), 
\label{Gdef}
	\\
\vartheta_0 &:=  \int_{\R^n}u_0 (y) dy,
\quad	\vartheta_1 :=  \int_{\R^n} u_1(y) dy.  
\label{theta1def}
	\end{align}
\end{thm}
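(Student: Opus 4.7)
The plan is to work in Fourier space, using Plancherel, and exploit the explicit representation
\[
\hat u(t,\xi) = K_0(t,\xi)\hat u_0(\xi) + K_1(t,\xi)\hat u_1(\xi),
\qquad
K_0 = \frac{\lambda_+ e^{\lambda_- t} - \lambda_- e^{\lambda_+ t}}{\lambda_+ - \lambda_-},
\quad
K_1 = \frac{e^{\lambda_+ t} - e^{\lambda_- t}}{\lambda_+ - \lambda_-},
\]
where $\lambda_\pm(\xi)$ are the roots of $\tau^2 + |\xi|^{2\sigma}\tau + |\xi|^2 = 0$. I would first split the frequency space into a low-frequency region $\{|\xi|\le \delta\}$ and its complement. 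On $|\xi|\le \delta$ the roots are real with the expansions
\[
\lambda_+(\xi) = -|\xi|^{2(1-\sigma)}\bigl(1+O(|\xi|^{2-4\sigma})\bigr),
\qquad
\lambda_-(\xi) = -|\xi|^{2\sigma}\bigl(1+O(|\xi|^{2-4\sigma})\bigr),
\]
while off the low-frequency region the real parts of $\lambda_\pm$ are bounded above by $-\varepsilon_\sigma<0$, producing the exponential high-frequency contribution $e^{-\varepsilon_\sigma t}(\|u_0\|_2+\|u_1\|_{H^{-2\sigma}})$ in \eqref{lin_diff} by direct Plancherel.

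The core of the proof is the low-frequency analysis. Since the target profile in Fourier space is $\vartheta_0 e^{-|\xi|^{2(1-\sigma)} t} + \vartheta_1 |\xi|^{-2\sigma}e^{-|\xi|^{2(1-\sigma)} t}$, I would split
\begin{align*}
\hat u - \vartheta_0 e^{-|\xi|^{2(1-\sigma)} t} - \vartheta_1 |\xi|^{-2\sigma}e^{-|\xi|^{2(1-\sigma)} t}
= {}& K_0(t,\xi)(\hat u_0(\xi) - \vartheta_0) + \bigl(K_0(t,\xi) - e^{-|\xi|^{2(1-\sigma)} t}\bigr)\vartheta_0 \\
& + K_1(t,\xi)(\hat u_1(\xi) - \vartheta_1) + \bigl(K_1(t,\xi) - |\xi|^{-2\sigma}e^{-|\xi|^{2(1-\sigma)} t}\bigr)\vartheta_1.
\end{align*}
The "$\hat u_j - \vartheta_j$" terms are handled by the elementary inequality $|\hat u_j(\xi) - \vartheta_j| \lesssim |\xi|^{\theta_j} \||\cdot|^{\theta_j} u_j\|_1$ combined with the pointwise kernel bounds $|K_0(t,\xi)|\lesssim e^{-c|\xi|^{2(1-\sigma)}t}$ and $|K_1(t,\xi)|\lesssim |\xi|^{-2\sigma}e^{-c|\xi|^{2(1-\sigma)}t}$ established in Section 4; after integration in $\xi$ with the scaling $r = |\xi| t^{1/(2(1-\sigma))}$ these produce the weighted terms $t^{\frac{1}{1-\sigma}(-\frac{n}{4}-\frac{\theta_0}{2})}\||\cdot|^{\theta_0}u_0\|_1$ and $t^{\frac{1}{1-\sigma}(-\frac{n}{4}+\sigma-\frac{\theta_1}{2})}\||\cdot|^{\theta_1}u_1\|_1$.

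For the "$K_j - \text{profile}$" terms I would use the algebraic identities
\[
K_0 - e^{-|\xi|^{2(1-\sigma)}t} = \bigl(e^{\lambda_+ t} - e^{-|\xi|^{2(1-\sigma)}t}\bigr) + \frac{\lambda_+}{\lambda_+ - \lambda_-}\bigl(e^{\lambda_- t} - e^{\lambda_+ t}\bigr),
\]
together with the analogous decomposition for $K_1$. The first summand is handled by writing $e^{\lambda_+ t} - e^{-|\xi|^{2(1-\sigma)}t} = e^{-|\xi|^{2(1-\sigma)}t}\bigl(e^{(\lambda_+ + |\xi|^{2(1-\sigma)})t}-1\bigr)$ and using $\lambda_+ + |\xi|^{2(1-\sigma)} = O(|\xi|^{4-6\sigma})$ from the low-frequency expansion; the second summand benefits from the strong decay $e^{\lambda_- t}\le e^{-c|\xi|^{2\sigma}t}$ and the smallness $|\lambda_+/(\lambda_+-\lambda_-)|\lesssim |\xi|^{2(1-2\sigma)}$. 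Multiplying by $\vartheta_j$ (which is bounded by $\|u_j\|_1$) and integrating over the low-frequency region with the above scaling yields the decay rates $\langle t\rangle^{\frac{1}{1-\sigma}(-\frac{n}{4}+2\sigma-1)}\|u_0\|_1$ and $\langle t\rangle^{\frac{1}{1-\sigma}(-\frac{n}{4}+3\sigma-1)}\|u_1\|_1$; the alternative rate $\langle t\rangle^{\frac{1}{\sigma}(-\frac{n}{4}+\sigma)}\|u_1\|_1$ arises from the $\lambda_-$ piece in $K_1$, where the natural scaling is $r=|\xi|t^{1/(2\sigma)}$ and the maximum with the $\lambda_+$ contribution is taken.

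The main technical obstacle will be the careful comparison between the exact semigroup symbol $e^{\lambda_+ t}$ and the parabolic profile $e^{-|\xi|^{2(1-\sigma)}t}$, since the relative error $e^{(\lambda_+ + |\xi|^{2(1-\sigma)})t}-1$ behaves differently in the two regimes $t|\xi|^{4-6\sigma}\lesssim 1$ (where a Taylor expansion is sharp) and $t|\xi|^{4-6\sigma}\gg 1$ (where both exponentials have to be bounded separately). Organizing these case distinctions together with the careful bookkeeping of weighted kernel estimates from Section 4 is what yields the precise decay rates in \eqref{lin_diff}.
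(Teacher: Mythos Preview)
Your proposal is correct and rests on the same ingredients as the paper: the frequency splitting, the low-frequency expansions $\lambda_+ \sim -|\xi|^{2(1-\sigma)}$, $\lambda_- \sim -|\xi|^{2\sigma}$ with $\lambda_+ + |\xi|^{2(1-\sigma)} = O(|\xi|^{4-6\sigma})$, the exponential decay off the low-frequency ball, and the moment inequality $|\hat u_j(\xi)-\vartheta_j|\lesssim |\xi|^{\theta_j}\||\cdot|^{\theta_j}u_j\|_1$.

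The organization differs from the paper's in one respect worth noting. You write
\[
K_j\hat u_j - \vartheta_j\,\widehat{\text{profile}}_j
= K_j(\hat u_j-\vartheta_j) + (K_j-\widehat{\text{profile}}_j)\vartheta_j,
\]
i.e.\ you apply the moment estimate against the \emph{full} kernel $K_j$ and then compare $K_j$ with the parabolic symbol as scalar functions. The paper (Lemma~\ref{lemma_lin_diff}) instead first compares $K_j\ast u_j$ with $\text{profile}_j\ast u_j$ via the convolution framework of Lemmas~\ref{Kweight}--\ref{Kweightdiff} (specialized to $q_1=q_2=1$, $\vartheta=s_1=s_2=0$), and only afterwards replaces $u_j$ by $\vartheta_j$ in the simpler profile $G_\sigma$ or $H_\sigma$. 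Both orderings produce exactly the rates in \eqref{lin_diff}. Your route is more self-contained for this theorem, since it needs only pointwise Fourier bounds and scalar integration; the paper's route goes through the general Lorentz-space machinery of Section~4 because that machinery is reused for the nonlinear estimates in Sections~6--8. One small point you should make explicit: on the high-frequency set the profile symbols $|\xi|^{-2\sigma}e^{-|\xi|^{2(1-\sigma)}t}$ and $e^{-|\xi|^{2(1-\sigma)}t}$ themselves must be controlled (this is the paper's term $I_{1,4}$), which is where the $H^{-2\sigma}$ norm of $u_1$ enters.
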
	

\begin{remark}
	\begin{equation*}
	\max \Big\{
	\frac{1}{1 - \sigma}\big(-\frac{n}{4} +  3\sigma - 1 \big),
	\frac{1}{\sigma}		 \big(-\frac{n}{4} +  \sigma \big) 
	\Big\} 
	= \begin{cases}
	\frac{1}{1 - \sigma}\left(-\frac{n}{4} +  3\sigma - 1 \right) &
	\text{if} \; 8 \sigma \le n,
	\\
	\frac{1}{\sigma}		 \left(-\frac{n}{4} +  \sigma \right) &
	\text{if} \; 8 \sigma \ge n. 
	\end{cases}
	\end{equation*}	
\end{remark}

\begin{remark}\label{Gdecay}
The function $G_\sigma(t,x)$ is the fundamental solution of the parabolic equation \eqref{H}.  
We easily see that 
	\begin{equation}\label{Gsigmadecay}
	\|  G_{\sigma}(t,\cdot) \|_{{2}}
	= 	\| \hat G_{\sigma}(t,\cdot)
		 \|_{{2}} 
	\sim t^{ \frac{1}{1-\sigma}(-\frac{n}{4} + \sigma)},
	\quad 
\| H_{\sigma}(t,\cdot)\| = 
\| \hat H_{\sigma}(t,\cdot)
\|_{{2}}
\sim {t}^{  -\frac{n}{4(1-\sigma)}},
\end{equation}
(see \eqref{Gest02} and \eqref{Hest02}). 
Putting $\theta_0 = [\theta - 2 \sigma]_+ $ and $\theta_1 = \theta$ ($\theta \in (0,1]$) in 
 \eqref{lin_diff}, and taking \eqref{Gsigmadecay} and the assumption $\sigma \in (0,\frac{1}{2})$ into consideration, we obtain   
	\begin{equation*}
\begin{aligned}
&	\| u(t,\cdot) - \vartheta_1 G_{\sigma}(t, \cdot) 	  \|_2 
\\
&	\lesssim
 t 
^{{\max \left\{
		\frac{1}{1 - \sigma}\left(-\frac{n}{4} +  \sigma - 
		\min \{1 - 2\sigma,  \frac{\theta}{2} \} \right),
		\frac{1}{\sigma}		 \left(-\frac{n}{4} +  \sigma \right) 
		\right\} }}
\\
& \quad \times ( 
\| u_0 \|_2+  \|u_1 \|_{{H^{-2 \sigma} }} 
+ \| \langle \cdot \rangle^{{[\theta - 2 \sigma]_+} } u_0 \|_1 
+ \| \langle \cdot \rangle^{\theta}  u_1 \|_1). 
\end{aligned}
\end{equation*}
Thus, the decay order of $\|u(t,\cdot) -  \vartheta_1 G_{\sigma}(t, \cdot)\|_2 $ is larger than that of  $\|G_{\sigma}(t,\cdot)\|_2$ itself, and therefore,  $\vartheta_1 G_{\sigma}(t,x)$ gives the asymptotic profile of the solution if $\vartheta_1 \neq 0$.  

If $u_1 = 0$, then \eqref{lin_diff} means
	\begin{equation}
\begin{aligned}
&	\| u(t,\cdot) - \vartheta_0 H_{\sigma}(t, \cdot) 
 \|_2 
\lesssim
t^{{
		\frac{1}{1 - \sigma}\left(-\frac{n}{4} - \min \{ 1 -  2\sigma, \frac{\theta_0}{2}\} \right)
}}
( \| u_0 \|_2 +  \|\langle \cdot \rangle^{\theta_0} u_0 \|_1). 
\end{aligned}
\end{equation}
Thus, the decay order of $\|u(t,\cdot) -  \vartheta_0 H_{\sigma}(t, \cdot)\|_2 $ is larger than that of  $\|H_{\sigma}(t,\cdot)\|_2$ itself if $\theta_0 > 0$, and therefore,  $\vartheta_0 H_{\sigma}(t,x)$ gives the asymptotic profile if $\vartheta_0 \neq 0$.  
\end{remark}

\begin{remark}\label{ITremark}	
Ikehata and Takeda \cite[Theorem 1.2]{IT} showed 
			\begin{equation*}
		\| u(t,\cdot) - \vartheta_1 G_{\sigma}(t,\cdot)  \|_2 
		= o( t^{ \frac{1}{1-\sigma}(-\frac{n}{4} + \sigma)})
		\end{equation*}
		as $t \to \infty$ for initial data in $(u_0, u_1) \in ( L^1 \cap H^1) \times (L^1 \cap L^2)$.   
		
			If $u_1 = 0$, 	Karch \cite[Corollary 4.1]{K}  showed 
		\begin{equation*}
		\| u(t,\cdot) - \vartheta_0 H_{\sigma}(t,\cdot)  \|_2 
		= o( t^{ -\frac{n}{4(1-\sigma)}})
		\end{equation*}
		as $t \to \infty$ for $u_0  \in L^1 $. 
	\end{remark}

	\begin{thm}[Global existence of the solution]\label{existence2}
		Let $n \ge 2$, and 
		\begin{equation}
	\label{pass1}
	p > p_{\sigma}:= 1 + \frac{2}{n - 2 \sigma}.   
	\end{equation}
	Assume that $\bar{s} \ge 1$ and $[\bar{s}] < p$. 
	If $ 2 \bar{s} < n$,  assume moreover that
	\begin{alignat}{2}\label{pass2}
	& p \le  1 + \frac{2}{n - 2\bar{s}}. 
		\end{alignat}
		Assume that $f \in C^{[\bar{s}],1 }(\R)$ satisfies \eqref{fass}. 
Let $q_j\; (j=0,1) $ be numbers such that
\begin{equation}
\left\{
\begin{aligned}
&  q_0 = \frac{2n}{n + 2 - 4\sigma}, \; 
&	& q_1 = \frac{2n}{n +2}
\qquad \quad \text{if} \quad 
p_\sigma < p < 1 + \frac{4}{n + 2 - 4\sigma},&
\\
&1 < q_0 < \frac{n(p-1)}{2},\; 
&	& 1 < q_1 <  \frac{n(p-1)}{2(1 + (p-1)\sigma)}
&
\\
&  \; 
& & 
\qquad \qquad \qquad \qquad \text{if} \quad 
1 + \frac{4}{n + 2 - 4\sigma} < p \le 1 + \frac{4}{n}, &
\\
&	1 < q_0 < 2,\;  
& & 1 < q_1 < \frac{2n}{n + 4 \sigma}
\qquad \quad \text{if} \quad 1 + \frac{4}{n} < p. &
\end{aligned}
\right. 
\end{equation}

	\renewcommand{\labelenumi}{(Case \arabic{enumi}). }
		\begin{enumerate}
		\item In the case 
		$p_\sigma < p \le 1 + \frac{4}{n + 2 - 4\sigma}$,
			let $\delta$ be a number satisfying
		\begin{equation}\label{deltaass1}
		2 \left(\frac{1}{p-1} + \sigma \right) - \frac{n}{2} - 1 < \delta \le 
		\frac{2}{p-1} - \frac{n}{2}. 
		\end{equation} 
			Then there exists a positive number $\vare$ such that if initial data 
		\begin{equation}\label{initial1}
		u_0 \in 
		H^{\bar{s}} 
		\cap H^{0,\delta}, \langle \cdot \rangle^\delta u_0 \in L^{q_0,2 }, \quad 
		u_1 \in \dot H^{\bar{s}-1}, \langle \cdot \rangle^\delta u_1 \in L^{{ q_1,2 }}
		\end{equation}	
		satisfy
		\begin{equation}
		\begin{aligned}\label{initial2}
		&\|\langle \cdot \rangle^\delta u_0 \|_{q_0,2 }
		+ \|\langle \cdot \rangle^\delta u_0 \|_2
		+  \|  u_0 \|_{{H^{\bar{s}}}}
		+ 		\|\langle \cdot \rangle^\delta u_1 \|_{{ q_1,2 }}
		+ \|  u_1 \|_{{\dot H^{s-1} }}
				\le \varepsilon,
		\end{aligned}
		\end{equation}
		then initial value problem \eqref{NW}  has a unique global  solution \\
		$u \in C([0,\infty); H^{\bar{s}}\cap H^{0,\delta}) \cap C^1((0,\infty);H^{\bar{s} - 1}) $. 
		
		\item
		In the case $p >1 +  \frac{4}{n + 2 - 4\sigma}$,
		there exists a positive number $\vare$ such that if initial data 
		\begin{equation*}
		u_0 \in H^{\bar{s}} \cap L^{q_0,2}, 
		\quad 
		u_1 \in H^{\bar{s}-1} \cap L^{q_1,2},
		\end{equation*}	
		satisfy
		\begin{align}\label{initial3}
		&\| u_0 \|_{q_0,2} + \| u_0 \|_{H^{\bar{s}}}
				 + 
		\|u_1 \|_{q_1,2}
		+ \|u_1\|_{{ H^{\bar{s}-1}}}  
		\le \varepsilon,
		\end{align}	
then initial value problem \eqref{NW}  has a unique global  solution \\
$u \in C([0,\infty); H^{\bar{s}}) \cap C^1((0,\infty);H^{\bar{s} - 1}) $. 					
	\end{enumerate}
		\end{thm}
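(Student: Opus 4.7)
The plan is to construct the global solution as the fixed point of the Duhamel map
\[
\Phi(u)(t) := E_0(t)u_0 + E_1(t)u_1 + \int_0^t E_1(t-s)\,f(u(s))\,ds,
\]
where $E_0(t),\,E_1(t)$ denote the two linear propagators of \eqref{LW}. Both Cases 1 and 2 fit one scheme: choose a complete metric space $X$ equipped with several time-weighted norms, verify $\Phi(B_\rho)\subset B_\rho$ on a small closed ball $B_\rho\subset X$ together with the contraction property, and invoke Banach's fixed-point theorem. The metric on $X$ in Case 1 is built from $t^{\alpha_k}$-weighted norms in $H^{\bar s}$, $H^{0,\delta}$ and assorted $L^r$ norms; in Case 2 the $H^{0,\delta}$ piece is dropped and one tracks $L^{q_0,2}$, $L^{q_1,2}$ and $H^{\bar s}$ in its place.

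For the linear estimates I would split each propagator into a high-frequency and a low-frequency piece. The high-frequency piece enjoys the parabolic regularisation coming from the damping $(-\Delta)^\sigma u_t$, giving exponential decay in $H^{\bar s}$ (and in $H^{0,\delta}$, the weight passing harmlessly through a smoothing kernel), which consumes the $H^{\bar s}\times\dot H^{\bar s-1}$ data and produces the third line of \eqref{lin_diff}-type estimates. The low-frequency piece is governed by the heat-type kernel $\exp(-c|\xi|^{2(1-\sigma)}t)$ and is the content of the kernel bounds of Section 4 (already used in Theorem \ref{thm_lin_diff}). The Lorentz norms on the data in \eqref{initial2}--\eqref{initial3} are chosen so that, after convolution with the low-frequency kernel and an application of the Lorentz Hardy--Littlewood--Sobolev inequality (Lemma \ref{weightlemma}), the output sits in $X$ with the prescribed decay rate $t^{\alpha_k}$. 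The trichotomy defining $q_0,q_1$ reflects at which point along the range $p>p_\sigma$ each term becomes critical.

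For the Duhamel term I would apply the nonlinear and convolution estimates of Sections 6 and 7: from \eqref{fass} one deduces bounds of the shape $\|f(u)\|_Y\lesssim \|u\|_\infty^{p-1}\|u\|_Y$, with the $j=[\bar s]$ clause of \eqref{fass} handling the fractional regularity part, together with a matching Lipschitz estimate for $f(u)-f(v)$. Plugging these into the linear estimates reduces the self-map property to integrability of time convolutions of the form $\int_0^t (t-s)^{-a_k}\langle s\rangle^{-b_k(p-1)}\,ds = O(t^{-\alpha_k})$. Condition \eqref{pass1} and, when $2\bar s<n$, the upper bound \eqref{pass2} jointly force $b_k(p-1)-a_k>1$ in Case 2; in Case 1 the integrability is borderline, and the extra decay supplied by the weighted norm $\|\langle\cdot\rangle^\delta u\|_2$ is what closes the gap, which is why the admissible range of $\delta$ in \eqref{deltaass1} is precisely that interval. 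The contraction estimate is identical in structure, with the Lipschitz clause of \eqref{fass} replacing the growth clause.

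The main obstacle is the weighted estimate of the low-frequency part of $E_1(t)$ in Case 1. Near $\xi=0$ its symbol behaves like $|\xi|^{-2\sigma}$, a genuine Riesz-potential singularity, so the weight $\langle x\rangle^\delta$ cannot be absorbed by a routine Leibniz or commutator argument as in the heat equation. The way around this is to represent $|\xi|^{-2\sigma}$ as a Riesz potential $I_{2\sigma}$ and invoke its boundedness between Lorentz spaces via Lemma \ref{weightlemma}; this is precisely the reason the data in \eqref{initial1}--\eqref{initial3} are measured in $L^{q_j,2}$ rather than in an ordinary $L^p$. Matching the admissible $\delta$ from \eqref{deltaass1} with the Lorentz indices needed both for the Sobolev embedding that produces the $L^\infty$ bound in the nonlinear estimate and for the convergence of the Duhamel time integral is the most delicate bookkeeping step, and is what pins down the hypotheses of the theorem; once this is in hand, the fixed-point argument closes routinely.
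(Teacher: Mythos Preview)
Your outline is essentially the paper's approach: contraction in a time-weighted space built on $H^{\bar s}$, $H^{0,\delta}$ and Lorentz norms, with the low-frequency kernel handled via Lemma~\ref{weightlemma} and the nonlinear term via the estimates of Sections~6--7. The paper organises this slightly differently: it first proves Proposition~\ref{existence}, a contraction argument in $X_{r,\delta,\bar s}$ indexed by an auxiliary parameter $r\in[1,\tfrac{2n}{n+4\sigma})$ controlling the decay rates, and then derives Theorem~\ref{existence2} by choosing, for each range of $p$, a suitable pair $(r,\delta)$ (with $\delta=0$ in Case~2) so that the data hypotheses \eqref{initial2}--\eqref{initial3} imply \eqref{initialass_r}. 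Your trichotomy in $p$ corresponds exactly to the three regimes of $(r,\delta)$ chosen in the paper's proof.

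One caution: your schematic $\|f(u)\|_Y\lesssim\|u\|_\infty^{p-1}\|u\|_Y$ is not what the paper does and would fail when $2\bar s<n$, since $H^{\bar s}\not\hookrightarrow L^\infty$ there. The actual nonlinear estimate (Lemma~\ref{function}, especially \eqref{nonlinear1} and \eqref{fweightq}) uses H\"older together with the interpolation bounds \eqref{u_q2}--\eqref{u_q}, which access $\|u\|_q$ for $q$ down to $\tfrac{2n}{n+2\delta}$ via the weighted norm; the upper bound \eqref{pass2} is precisely what keeps the required exponents in the admissible range. Your remark that ``the extra decay supplied by $\|\langle\cdot\rangle^\delta u\|_2$ closes the gap'' is better read as: the weight grants access to sub-$L^2$ norms of $u$, which in turn places $f(u)$ in the $L^r$ space needed for the low-frequency Duhamel estimate.
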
		
			
	\begin{remark}
	We note that $L^{q_j} = L^{q_j, q_j} \subset L^{q_j, 2}$ by Lemma A given later, since $q_j \le 2$.   
\end{remark}

	\begin{remark}
		If the space dimension $n = 2$, then 
			$ 1 + \frac{4}{n + 2 - 4\sigma} \le p_{\sigma}$,
			and therefore, (Case~1) does not occur. 
		\end{remark}
	
We prove Theorem \ref{existence2} by using the following proposition.  

\begin{prop}[Global existence of the solution] \label{existence}
	Let $n \ge 2$ and $r \in [1,\frac{2n}{n + 4 \sigma})$.  
Let 
	\begin{equation}
	\label{pass}
	p > p_{\sigma,r}:= 1 + \frac{2r}{n - 2r\sigma}.   
	\end{equation}
	Assume that $\bar{s} \ge 1$ and $[\bar{s}] < p$. 
If $ 2 \bar{s} < n$,  assume moreover \eqref{pass2}. 
Assume that $f \in C^{[\bar{s}],1 }(\R)$ satisfies \eqref{fass}. 
	Let $\delta$ be a non-negative constant satisfying 
	\begin{equation} 
	\label{delta}
	n(\frac{1}{r} - \frac{1}{2}) - 1 \le 
	\delta < n(\frac{1}{r} - \frac{1}{2}) - 2\sigma. 
	\end{equation}	
	and	
	\begin{equation}
	\label{delta2}
		\begin{cases}
	&	n(\frac{1}{p} - \frac{1}{2}) <  \delta, \quad \text{if}\quad  r = 1,\\
	& 	n(\frac{1}{pr} - \frac{1}{2}) \le \delta \quad \; \text{if}  \quad r \in (1,\frac{2n}{n + 4 \sigma}). 
	\end{cases}\end{equation}
	Let
	\begin{equation}\label{hat_qdef}
	\hat q_0 = \frac{nr}{n - r(\delta + 2\sigma)}, \quad 
	\hat q_1 =  \frac{nr}{n - r \delta}. 
	\end{equation}
	Then there exists a positive number $\vare$ such that if initial data 
	\begin{equation}\label{initialass_0}
	u_0 \in H^{\bar{s}} \cap H^{0,\delta}, 
	\langle \cdot \rangle^\delta u_0 \in L^{{\hat q_0,2}}, \quad 
	u_1 \in \dot H^{\bar{s}-1}, \langle \cdot \rangle^\delta u_1 \in 
	L^{{\hat q_1,2 }}	
	\end{equation}	
	satisfy
	\begin{equation}\label{initialass_1}
	\begin{aligned}
	&	  \|\langle \cdot \rangle^\delta u_0 \|_{{\hat q_0,2}}
	+ \|\langle \cdot \rangle^\delta u_0 \|_{{{{2}} }}
	+  \|  u_0 \|_{{H^{\bar{s}}}}
		\\	
	&  + 
	\|u_1\|_{{{1}}}
	+ \|\langle \cdot \rangle^\delta u_1 \|_{{\hat q_1,2}}
		+ \|  (-\Delta)^{\frac{\bar{s}}{2}} (1 - \Delta)^{-\frac{1}{2}}  u_1 \|_{{2}} 
	\le \vare,
	\end{aligned}
	\end{equation}		
	in the case $r = 1$, 	and
		\begin{equation}
	\begin{aligned}\label{initialass_r}
	&\|\langle \cdot \rangle^\delta u_0 \|_{{{\hat q_0},2 }}
	+ \|\langle \cdot \rangle^\delta u_0 \|_{{2}}
	+ \|u_0\|_{{H^{\bar{s}}}} 
	\\
	& \quad + 
		\|\langle \cdot \rangle^\delta u_1 \|_{{{{ \hat q_1   ,2}} }}
	+ \|  (-\Delta)^{\frac{\bar{s}}{2}} (1 - \Delta)^{-\frac{1}{2}}  u_1 \|_{{2}}
	\le \varepsilon,
	\end{aligned}		\end{equation}
	in the case $r \in (1,2]$, 	
	then initial value problem \eqref{NW} has a unique global solution 
	$u \in C([0,\infty); H^{\bar{s}} \cap H^{0,\delta}))	\cap C^1((0,\infty);H^{\bar{s}-1}) $. 
	
	Furthermore, the solution satisfies estimate:
	\begin{equation}
	\begin{aligned}\label{sol_est}
	\sup_{t > 0} 
	&	 \Big(
	\langle t \rangle ^{{\frac{1}{1 - \sigma} \left(\frac{n}{2}(\frac{1}{r} - \frac{1}{2}) -  \sigma \right) }}
	\Norm{ u(t,\cdot)}{{2}}
	+
	\langle t \rangle ^{{\frac{1}{1 - \sigma} \left(\frac{n}{2}(\frac{1}{r} - \frac{1}{2}) - \frac{\delta}{2} - \sigma \right) }}
	\Norm{|\cdot|^\delta u(t,\cdot)}{{2}}			
	\\
	&\quad +
	\langle t \rangle ^{{\frac{1}{1 - \sigma} \left(\frac{n}{2}(\frac{1}{r} - \frac{1}{2}) -  \sigma  + \frac{\bar{s}}{2}
			\right) }}
\|(-\Delta)^{\frac{\bar{s}}{2}} u(t,\cdot)\|_{{2}}
	\Big) < \infty. 
	\end{aligned}
	\end{equation}
\end{prop}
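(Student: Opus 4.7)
The plan is a contraction-mapping argument applied to the Duhamel formulation
\begin{equation*}
u(t) = S_0(t)u_0 + S_1(t)u_1 + \int_0^t S_1(t-s)f(u(s))\,ds,
\end{equation*}
where $S_0(t), S_1(t)$ denote the two solution operators of \eqref{LW}. I would introduce the Banach space $X$ consisting of functions $u\in C([0,\infty);H^{\bar{s}}\cap H^{0,\delta})\cap C^1((0,\infty);H^{\bar{s}-1})$ normed by the supremum on the left-hand side of \eqref{sol_est}, so that membership of the fixed point in $X$ is exactly the conclusion of the Proposition, and work on a small closed ball $B_\varepsilon\subset X$.

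The first main step is to show $\|S_0(t)u_0 + S_1(t)u_1\|_X\lesssim \varepsilon$ under \eqref{initialass_1} or \eqref{initialass_r}. The unweighted $L^2$ and $\dot H^{\bar{s}}$ pieces follow from the $(L^r\cap L^2)\to L^2$ and $\dot H^{\bar{s}}$ kernel estimates of Section~4 in the same spirit as Theorem~\ref{thm_lin_diff}. The weighted part $\||\cdot|^\delta(S_0(t)u_0+S_1(t)u_1)\|_2$ is the delicate one: the low-frequency symbol of $S_1(t)$ behaves like $|\xi|^{-2\sigma}e^{-c|\xi|^{2(1-\sigma)}t}$, whose inverse Fourier transform decays too slowly in $x$ for standard weighted-$L^2$ arguments to close. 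The remedy is Lemma~\ref{weightlemma}, which should be established using the Lorentz-space Hardy--Littlewood--Sobolev inequality: multiplication by $\langle x\rangle^\delta$ is absorbed by treating the low-frequency kernel as a Riesz potential mapping $L^{\hat q_j,2}$ into $L^2$, with the exponents $\hat q_j$ given in \eqref{hat_qdef} tuned so that the Sobolev gain cancels the weight exactly. The constraints \eqref{delta} and \eqref{delta2} are dictated by this mapping property.

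The second main step is the nonlinear estimate of Section~6. For $u\in B_\varepsilon$, the bound $|f(u)|\lesssim |u|^p$ from \eqref{fass} together with Gagliardo--Nirenberg-type interpolation in weighted $L^2$ and the Sobolev embeddings made available by \eqref{pass2} yield control of $\|f(u(s))\|_{L^r}$ and $\|\langle\cdot\rangle^\delta f(u(s))\|_{L^{\hat q_j,2}}$ by $\|u\|_X^p\langle s\rangle^{-\beta}$ with $\beta$ computed from the exponents defining $X$. Applying the linear estimates of step one slice by slice to $S_1(t-s)f(u(s))$ and integrating in $s$ reduces the task to checking that the convolutions $\int_0^t\langle t-s\rangle^{-a}\langle s\rangle^{-b}\,ds$ decay at the three rates appearing in the norm of $X$. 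The subcriticality $p>p_{\sigma,r}$ in \eqref{pass} is precisely what ensures the exponents $b$ are large enough for these time integrals to carry the correct rate.

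Once the nonlinear map is shown to send $B_\varepsilon$ into itself and to be a contraction (the Lipschitz estimate is the same as the nonlinear estimate with $|u-v|(|u|+|v|)^{p-1}$ in place of $|u|^p$, as licensed by the second line of \eqref{fass}), Banach's theorem supplies a unique fixed point $u\in X$ automatically satisfying \eqref{sol_est}; the regularity statement $u\in C([0,\infty);H^{\bar{s}}\cap H^{0,\delta})\cap C^1((0,\infty);H^{\bar{s}-1})$ follows from strong continuity of $S_0(t),S_1(t)$ on the relevant spaces together with dominated convergence applied to the Duhamel integral. The main obstacle throughout is the weighted $L^2$ bound on the singular low-frequency part of $S_1$; the Lorentz-space framework of Lemma~\ref{weightlemma} is the central technical device that makes the whole scheme close, and essentially every index constraint in the Proposition's hypotheses is imposed to make this device interact properly with the nonlinear estimate.
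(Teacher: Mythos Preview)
Your proposal is correct and follows essentially the same route as the paper: a contraction argument on the ball $X(\varepsilon)\subset X_{r,\delta,\bar{s}}$, with the linear part handled by the low/middle/high kernel estimates of Section~4 (the weighted low-frequency piece via Lemma~\ref{weightlemma} and its corollaries, feeding into Lemma~\ref{Kweight} with the exponents $\hat q_j$), the nonlinear part by Lemmas~\ref{function}, \ref{f(u)} and \ref{f(u)der}, and the time integrals closed by the subcriticality condition \eqref{pass} through the elementary inequality \eqref{integ}. The only minor remark is that the mechanism inside Lemma~\ref{weightlemma} is not literally HLS for a Riesz potential but rather the splitting $|x|^\vartheta(K*\varphi)\lesssim (|x|^\vartheta K)*\varphi + K*(|x|^\vartheta\varphi)$ combined with Lorentz-space Young and Hausdorff--Young; your description captures the spirit accurately.
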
		

\begin{remark}
	The assumption $r < \frac{2n}{n + 4\sigma}$ means that 
	$n(\frac{1}{r} - \frac{1}{2}) - 2\sigma > 0$.  		
The inequality
	\begin{equation*}
	n(\frac{1}{pr} - \frac{1}{2}) <	n(\frac{1}{r} - \frac{1}{2}) - 2\sigma
	\end{equation*}	
is equivalent to 
	\begin{equation*}
	p > 1 + \frac{2 \sigma r}{n - 2 \sigma r},
	\end{equation*}
	which holds by \eqref{pass} since $ \sigma < 1$. 
	Hence, we can take a non-negative number $\delta$ satisfying the assumption \eqref{delta} and \eqref{delta2}.  						
\end{remark}

\bigskip

If initial data belong to weighted $L^1$ space, the asymptotic profile of the solution is given by a constant multiple of the fundamental solution of the parabolic equation \eqref{H}.   

\begin{thm}[Asymptotic profile] \label{thmdiff} 
Assume the assumption of Proposition \ref{existence} with $r = 1$. 
	Let $\vare$ be a positive constant given by Proposition \ref{existence} for $r = 1$, and 
assume that initial data satisfy \eqref{initialass_0} and \eqref{initialass_1}.
		Let $\nu$  be an arbitrary number satisfying
	\begin{equation}
\label{nudef}
	0 < \nu  <
	\min 	\big \{
\frac{n}{4}(p-2) + \frac{1}{2}p \delta,     \delta 
\big \}. 
	\end{equation}
	Assume moreover that
\begin{equation}\label{nudef2}
	\nu < 	\frac{\delta}{2 \bar{s}}(n - \frac{p}{2}(n - 2 \bar{s}))	
	\qquad 	\text{if} \quad \bar{s} < \frac{n}{2}. 
\end{equation}	
	Then there is a constant $C$ depending on 
\begin{equation*}\begin{aligned}
&
\|\langle \cdot \rangle^{[\theta - 2\sigma]_+} u_0 \|_1+
 \|\langle \cdot \rangle^\delta u_0 \|_{{{{\frac{n}{n - (\delta + 2\sigma)}}} }}
+ \|\langle \cdot \rangle^\delta u_0 \|_{{{{2}} }}
+  \|  u_0 \|_{{H^{\bar{s} }}}
\\
& 
+ \|\langle \cdot \rangle^\theta u_1 \|_1 + \|\langle \cdot \rangle^\delta u_1 \|_{{{{ \frac{n}{n -  \delta}}} }}
+ \|  (-\Delta)^{\frac{\bar{s}}{2}} (1 - \Delta)^{-\frac{1}{2}}  u_1 \|_{{2}} 
 \end{aligned}
\end{equation*}
such that the solution 
$u \in C([0,\infty); H^{\bar{s}} \cap H^{0,\delta}) \cap C^1((0,\infty);H^{\bar{s} - 1}) $ 
of \eqref{NW}, which is given by Proposition \ref{existence}, satisfies the following:
\begin{equation}\begin{aligned}
	\Big\|
	&
	u(t,\cdot) - 
	\varTheta G_{\sigma}(t, \cdot) 
		\Big\|_{{2}}
\\
& \le C 
  t^{{ \max \left\{
		\frac{1}{1 - \sigma} \left(-\frac{n}{4} + \sigma  
				- \min \{( p - 1)\left(\frac{n}{2} - \sigma \right)  - 1, 1 - 2 \sigma, \nu, \frac{\theta}{2} \} 
		\right),
		\frac{1}{\sigma} \left(-\frac{n}{4} +  \sigma \right) 
		\right\}
		 }},	           
\label{heatdiff}
	\end{aligned}\end{equation}
	where $G_{\sigma}$ is defined by \eqref{Gdef} and 
	\begin{align}\label{thetadef}
	 \varTheta &:=  \int_{\R^n} u_1(y)dy  + \int_0^\infty \int_{\R^n}f(u(\tau,y))dy d\tau. 
	\end{align}
	\end{thm}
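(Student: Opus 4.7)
The plan is to use Duhamel's formula and to apply Theorem~\ref{thm_lin_diff} both to the linear piece and slice-wise to the nonlinear convolution. Let $\mathcal{K}_0(t)$, $\mathcal{K}_1(t)$ denote the propagators of the linear equation \eqref{LW} acting on $(u_0,0)$ and $(0,u_1)$ respectively, so that
\begin{equation*}
u(t,\cdot) \;=\; \mathcal{K}_0(t) u_0 + \mathcal{K}_1(t) u_1 + \int_0^t \mathcal{K}_1(t-\tau)\, f(u(\tau,\cdot))\,d\tau.
\end{equation*}
Apply Theorem~\ref{thm_lin_diff} with $\theta_0 = [\theta - 2\sigma]_+$ and $\theta_1 = \theta$ to bound $\|\mathcal{K}_0(t)u_0 + \mathcal{K}_1(t)u_1 - \vartheta_0 H_\sigma(t,\cdot) - \vartheta_1 G_\sigma(t,\cdot)\|_2$ by the linear portion of the right-hand side of \eqref{heatdiff}. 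Since $\|H_\sigma(t,\cdot)\|_2 \sim t^{-n/(4(1-\sigma))}$ is majorised by the target rate (as in the Remark following Theorem~\ref{thm_lin_diff}, using $\sigma \in (0,1/2)$), the term $\vartheta_0 H_\sigma(t,\cdot)$ can be absorbed into the error.

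For the nonlinear convolution, set $F(\tau) := \int_{\R^n} f(u(\tau,y))\,dy$ and decompose
\begin{equation*}
\int_0^t \mathcal{K}_1(t-\tau) f(u(\tau,\cdot))\,d\tau \;-\; G_\sigma(t,\cdot)\int_0^\infty F(\tau)\,d\tau \;=\; J_1 + J_2 + J_3 + J_4,
\end{equation*}
where
\begin{align*}
J_1 &= \int_0^{t/2} \bigl[\mathcal{K}_1(t-\tau) f(u(\tau,\cdot)) - F(\tau)\, G_\sigma(t-\tau,\cdot)\bigr]\,d\tau,\\
J_2 &= \int_0^{t/2} F(\tau)\,\bigl[G_\sigma(t-\tau,\cdot) - G_\sigma(t,\cdot)\bigr]\,d\tau,\\
J_3 &= \int_{t/2}^{t} \mathcal{K}_1(t-\tau) f(u(\tau,\cdot))\,d\tau, \qquad J_4 = -G_\sigma(t,\cdot)\int_{t/2}^\infty F(\tau)\,d\tau.
\end{align*}

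From the a priori decay \eqref{sol_est} with $r=1$, Gagliardo--Nirenberg interpolation gives $\|f(u(\tau,\cdot))\|_1 \lesssim \|u(\tau,\cdot)\|_p^p \lesssim \langle\tau\rangle^{-(p-1)(n/2 - \sigma)/(1-\sigma)}$, which is integrable on $(0,\infty)$ precisely because $p > p_\sigma$. This makes $\varTheta$ finite and, combined with $\|G_\sigma(t,\cdot)\|_2 \sim t^{(\sigma - n/4)/(1-\sigma)}$, controls $J_4$ with the factor $t^{-((p-1)(n/2-\sigma)-1)/(1-\sigma)}$, producing the first entry in the $\min$ of \eqref{heatdiff}. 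Term $J_3$ is handled by the change of variable $t - \tau \in [0,t/2]$ together with the $L^1$--$L^2$ smoothing of $\mathcal{K}_1$. For $J_2$, the identity $\partial_t G_\sigma = -(-\Delta)^{1-\sigma} G_\sigma$ yields $\|G_\sigma(t-\tau,\cdot) - G_\sigma(t,\cdot)\|_2 \lesssim \tau\, t^{-1}\,\|G_\sigma(t,\cdot)\|_2$ for $\tau \le t/2$, and this combined with the summability of $\tau F(\tau)$ contributes the $1-2\sigma$ factor in the $\min$.

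The decisive term is $J_1$. Apply Theorem~\ref{thm_lin_diff} slice-wise with initial data $(0, f(u(\tau,\cdot)))$ and weight $\theta_1 = 2\nu$; for $\tau \in [0,t/2]$ one has $t - \tau \sim t$, and integration in $\tau$ produces the decay rate $\frac{1}{1-\sigma}(-n/4 + \sigma - \nu)$ provided $\||\cdot|^{2\nu} f(u(\tau,\cdot))\|_1$ is summable in $\tau$. The main obstacle lies here: establishing this weighted $L^1$ bound on $f(u(\tau,\cdot))$ with decay strictly faster than $\tau^{-1}$. Since $|f(u)| \lesssim |u|^p$, the problem reduces to estimating $\||x|^{2\nu} |u|^p\|_1$ via H\"older from the a priori bounds $u(\tau,\cdot) \in L^2 \cap H^{\bar{s}}$ and $|\cdot|^\delta u(\tau,\cdot) \in L^2$ supplied by \eqref{sol_est}. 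The constraints \eqref{nudef}--\eqref{nudef2} are precisely the feasibility conditions for this interpolation: $\nu < \delta$ caps the transferable weight, $\nu < \frac{n}{4}(p-2) + \frac{p\delta}{2}$ guarantees $\tau$-summability after the H\"older decomposition, and when $\bar{s} < n/2$ the Sobolev embedding into $L^q$ with $q = 2n/(n - 2\bar{s})$ forces the refined bound \eqref{nudef2}. Assembling the estimates of $J_1,\ldots,J_4$ with the linear contribution yields \eqref{heatdiff}, with the constant $C$ expressed in terms of the data norms listed in the statement.
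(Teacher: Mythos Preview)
Your decomposition is exactly the paper's: your $J_1,J_2,J_3,J_4$ are the paper's $L_2,L_3,L_1,L_4$ in Lemma~\ref{f(u)derdiff2}, the slice-wise application of Theorem~\ref{thm_lin_diff} with weight $\theta_1=2\nu$ to $J_1$ is how the paper handles $L_{2,3}$, and the H\"older splitting $\||\cdot|^{2\nu}|u|^p\|_1\le\||\cdot|^\delta u\|_2^{2\nu/\delta}\|u\|_{\tilde q}^{\,p-2\nu/\delta}$ that forces \eqref{nudef}--\eqref{nudef2} is precisely \eqref{xnu/pu}--\eqref{gest2}. The linear piece and $J_4$ are handled correctly.

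There is, however, a real gap in $J_3$. A plain $L^1\!\to\!L^2$ bound gives, after substituting $s=t-\tau$,
\[
\|J_3\|_2\lesssim t^{-\frac{(p-1)(n/2-\sigma)}{1-\sigma}}\int_0^{t/2}\langle s\rangle^{\frac{1}{1-\sigma}(-\frac{n}{4}+\sigma)}\,ds,
\]
and when $n>4$ the $s$-integral is $O(1)$, so the resulting rate is only $-\,(p-1)(n/2-\sigma)/(1-\sigma)$. For $p$ near $p_\sigma$ this tends to $-1/(1-\sigma)$, which for $n/4>1+\sigma$ is strictly \emph{larger} than the rate claimed in \eqref{heatdiff}; for instance $n=6$, $\sigma=0.2$, $p\approx p_\sigma$ gives roughly $-1.26$ against a target of $-1.63$. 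The paper circumvents this by choosing the endpoint $q_1=q_2=\tfrac{2n}{n+4\sigma}$ in \eqref{K1lowweight}, which makes the kernel factor $\langle t-\tau\rangle^0=O(1)$ and transfers all decay to $\|f(u(\tau,\cdot))\|_{2n/(n+4\sigma)}\lesssim\|u(\tau,\cdot)\|_{2np/(n+4\sigma)}^p$; this is what produces the extra $t^{(-n/4+2\sigma)/(1-\sigma)}$ in \eqref{L1est}. A smaller imprecision: $\tau F(\tau)$ is \emph{not} integrable on $[0,t/2]$ when $(p-1)(n/2-\sigma)<2-\sigma$, so your $J_2$ argument needs the case split carried out in \eqref{L3est}; the non-integrable case still lands inside the $(p-1)(n/2-\sigma)-1$ entry of the $\min$, so this one is easily repaired.
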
	
	
\begin{remark}
The right-hand sides of \eqref{nudef} and \eqref{nudef2} are positive.  
In fact, the assumption \eqref{delta2} implies 
		$\frac{n}{2}(p-2) + p \delta  > 0$, 
		and \eqref{pass2} implies 
		$	n - \frac{p}{2}(n - 2 \bar{s}) > 0$. 
		Hence we can take $\nu$ satisfying \eqref{nudef} and \eqref{nudef2}.  
	\end{remark}
	
\begin{remark}\label{Gdecay2}
Since \eqref{Gsigmadecay} holds, \eqref{heatdiff} implies that $G_\sigma$ gives the asymptotic profile of the solution 
if $\varTheta \neq 0 $.  
\end{remark}

\begin{remark} \label{KarchHayashi}
	In the case $\sigma =  0$, 
Hayashi,  Kaikina and Naumkin \cite{Hy} 
showed the existence of global solution 
$u \in C([0,\infty);H^{\bar{s}} \cap H^{0,\delta})$ 
of  the semilinear damped wave \eqref{NW} with $\sigma = 0$ for small initial data
$
u_0 \in H^{\bar{s}} \cap H^{0,\delta}, u_1 \in H^{\bar{s}-1} \cap H^{0,\delta}
$
with $\delta > \frac{n}{2}$,  
and showed
\begin{align*}
\Big\|
& u(t,\cdot) -  \tilde \varTheta G_0(t, \cdot) 
\Big\|_{{q}}
\le C t^{{ -\frac{n}{2}(1 - \frac{1}{q}) 
		- \min \{\frac{n}{2}(p - 1) - 1, \frac{\delta}{2}- \frac{n}{4}, \nu \}	
	}},
	\end{align*}	
	for $2 \le q	\le \frac{2n}{n - 2 \bar{s}}$,	where 		
		$ \tilde \varTheta =  \int_{\R^n} (u_0(y) +u_1(y))dy  + \int_0^\infty \int_{\R^n}f(u(\tau,y))dy d\tau$,  
		$G_0$ is the heat kernel (\eqref{Gdef} with $\sigma = 0$) and 
	$0  <  \nu < 1$.  
\end{remark}
	
							\section{Preliminary lemmas}
				
			We list some properties for weak $L^p$ and Lorentz spaces which 	are used in this paper 
	(see \cite[section 1.3]{BL}, \cite{O}, for example).  
			
		\begin{lemmaA}
					Let $q \in (0,\infty)$.  Then
			\begin{align*}
			&L^{q,q} = L^q, \quad L^{q,\infty} = L_q^*,
			\label{Lorentz1}
			\\
			&		L^{q,\rho_1} \subset L^{q,\rho_2} \quad \text{if} \quad 1 \le \rho_1 \le \rho_2 \le \infty.   
				\end{align*}
		\end{lemmaA}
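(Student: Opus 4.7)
The plan is to verify each of the three assertions from the definitions, working directly with the decreasing rearrangement $\varphi^*$ and the distribution function $m(\tau,\varphi)$. All three are classical facts (they are quoted from Bergh--Löfström), so the proposal is a direct unpacking rather than a substantive argument.

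First I would prove $L^{q,q}=L^q$. The key is equimeasurability: $m(\tau,\varphi)=m(\tau,\varphi^*)$, which by the layer-cake/Fubini argument yields $\int_0^\infty \varphi^*(t)^q\,dt=\int_{\mathbb R^n}|\varphi(x)|^q\,dx$. Substituting $r=q$ in the definition,
\[
\|\varphi\|_{q,q}^{\,q}=\int_0^\infty\bigl(t^{1/q}\varphi^*(t)\bigr)^q\frac{dt}{t}=\int_0^\infty\varphi^*(t)^q\,dt=\|\varphi\|_q^{\,q}.
\]
For $L^{q,\infty}=L_q^*$, I would show the two suprema in the definition are actually equal. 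Using the identity $m(\varphi^*(t),\varphi)\le t\le m(\varphi^*(t)-,\varphi)$ and the fact that $\varphi^*$ is right-continuous and nonincreasing, one checks that $\{(t,\tau):\varphi^*(t)>\tau\}=\{(t,\tau):m(\tau,\varphi)>t\}$ up to measure-zero endpoint issues, so $\sup_t t^{1/q}\varphi^*(t)=\sup_\tau \tau\,m(\tau,\varphi)^{1/q}$; this is exactly the weak-$L^q$ quasi-norm.

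The embedding $L^{q,\rho_1}\subset L^{q,\rho_2}$ for $\rho_1\le \rho_2\le \infty$ is the only step with nontrivial content. I would proceed in two stages. Stage one: exploit the monotonicity of $\varphi^*$ to control the weak norm by the $(q,\rho_1)$ norm. For any fixed $t>0$, since $\varphi^*(s)\ge\varphi^*(t)$ on $(0,t]$,
\[
\|\varphi\|_{q,\rho_1}^{\rho_1}\ge\int_0^t\bigl(s^{1/q}\varphi^*(t)\bigr)^{\rho_1}\frac{ds}{s}=\frac{q}{\rho_1}\,t^{\rho_1/q}\varphi^*(t)^{\rho_1},
\]
so $t^{1/q}\varphi^*(t)\le(\rho_1/q)^{1/\rho_1}\|\varphi\|_{q,\rho_1}$, giving $\|\varphi\|_{q,\infty}\le(\rho_1/q)^{1/\rho_1}\|\varphi\|_{q,\rho_1}$. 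This already handles the case $\rho_2=\infty$. Stage two: for $\rho_1<\rho_2<\infty$, split an exponent,
\[
\|\varphi\|_{q,\rho_2}^{\rho_2}=\int_0^\infty\bigl(s^{1/q}\varphi^*(s)\bigr)^{\rho_1}\bigl(s^{1/q}\varphi^*(s)\bigr)^{\rho_2-\rho_1}\frac{ds}{s}\le\|\varphi\|_{q,\infty}^{\rho_2-\rho_1}\|\varphi\|_{q,\rho_1}^{\rho_1},
\]
and combine with stage one to conclude $\|\varphi\|_{q,\rho_2}\lesssim\|\varphi\|_{q,\rho_1}$.

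Of these, the only delicate point is the weak-norm bound in stage one of the embedding: it crucially uses that $\varphi^*$ is nonincreasing (so the integrand is controlled from below by its value at the upper endpoint), which is precisely why a general function $g$ satisfying $g\in L^{\rho_1}(dt/t)$ need not lie in $L^{\rho_2}(dt/t)$ even though it does here. The other items reduce to bookkeeping around the definitions of $\varphi^*$ and $m(\tau,\varphi)$.
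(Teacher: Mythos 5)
Your proof is correct and is the standard textbook argument. The paper itself gives no proof of Lemma A --- it is quoted without proof from Bergh--L\"{o}fstr\"{o}m (and O'Neil) --- and your three steps (equimeasurability for $L^{q,q}=L^q$, the identity of the two suprema for $L^{q,\infty}$, and the monotonicity-of-$\varphi^*$ bound $\|\varphi\|_{q,\infty}\le(\rho_1/q)^{1/\rho_1}\|\varphi\|_{q,\rho_1}$ followed by exponent splitting for the nesting) are precisely the classical ones the citation points to.
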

	
		\begin{lemmaB}					
		Assume that $\mu, \rho, \nu \in (1,\infty)$ and $\tilde \mu, \tilde \rho, \tilde \nu \in [1,\infty]$  
								satisfy
								$$
								\frac{1}{\mu} = \frac{1}{\rho}	 + \frac{1}{\nu}, \quad 
								\frac{1}{\tilde \mu  } = \frac{1}{\tilde \rho}	 + \frac{1}{\tilde\nu}.  
								$$
													Then
								\begin{equation*}
								\Norm{ f g}{{{\mu,\tilde \mu}}} \lesssim 	
								\Norm{f}{{{\rho, \tilde \rho}}}
								\Norm{g}{{{\nu, \tilde \nu}}},			
								\end{equation*}
								provided the right-hand side is finite.  
							\end{lemmaB}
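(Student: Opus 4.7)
The plan is to deduce this from a rearrangement version of H\"older's inequality, in the style of O'Neil. The key ingredient is the pointwise bound
\begin{equation*}
(fg)^*(t) \le f^*(t/2)\, g^*(t/2) \qquad (t>0).
\end{equation*}
To see this, set $\alpha = f^*(t/2)$ and $\beta = g^*(t/2)$. By definition of the decreasing rearrangement and right-continuity of the distribution function, $m(\alpha,f) \le t/2$ and $m(\beta,g) \le t/2$. Pointwise, $|f(x)g(x)|>\alpha\beta$ forces $|f(x)|>\alpha$ or $|g(x)|>\beta$, so
\begin{equation*}
\{|fg|>\alpha\beta\}\subset \{|f|>\alpha\}\cup\{|g|>\beta\},
\end{equation*}
whence $m(\alpha\beta,fg)\le t$, giving $(fg)^*(t)\le \alpha\beta$.

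Next I would substitute this bound into the Lorentz quasi-norm. Assume first $\tilde\mu<\infty$. Then
\begin{equation*}
\|fg\|_{\mu,\tilde\mu}^{\tilde\mu}
=\int_0^\infty \bigl(t^{1/\mu}(fg)^*(t)\bigr)^{\tilde\mu}\,\frac{dt}{t}
\le \int_0^\infty t^{\tilde\mu/\mu} f^*(t/2)^{\tilde\mu} g^*(t/2)^{\tilde\mu}\,\frac{dt}{t}.
\end{equation*}
Changing variables $s=t/2$ and using $\frac{1}{\mu}=\frac{1}{\rho}+\frac{1}{\nu}$,
\begin{equation*}
\|fg\|_{\mu,\tilde\mu}^{\tilde\mu}
\le 2^{\tilde\mu/\mu}\int_0^\infty \bigl(s^{1/\rho}f^*(s)\bigr)^{\tilde\mu}\bigl(s^{1/\nu}g^*(s)\bigr)^{\tilde\mu}\,\frac{ds}{s}.
\end{equation*}
Now apply the classical H\"older inequality on the measure space $(\mathbb R_+,ds/s)$ with conjugate exponents $\tilde\rho/\tilde\mu$ and $\tilde\nu/\tilde\mu$; these are indeed conjugate because $\tilde\mu/\tilde\rho+\tilde\mu/\tilde\nu=1$ by the second hypothesis. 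This yields
\begin{equation*}
\|fg\|_{\mu,\tilde\mu}^{\tilde\mu}
\le 2^{\tilde\mu/\mu}\, \|f\|_{\rho,\tilde\rho}^{\tilde\mu}\,\|g\|_{\nu,\tilde\nu}^{\tilde\mu},
\end{equation*}
and taking the $\tilde\mu$-th root gives the claim with constant $2^{1/\mu}$.

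Finally I would handle the endpoint cases where some of $\tilde\mu,\tilde\rho,\tilde\nu$ equal $\infty$. If $\tilde\mu=\infty$, $\tilde\rho=\infty$, $\tilde\nu=\infty$, the rearrangement bound gives directly
\begin{equation*}
t^{1/\mu}(fg)^*(t)\le \bigl((t/2)^{1/\rho}f^*(t/2)\bigr)\bigl((t/2)^{1/\nu}g^*(t/2)\bigr)\cdot 2^{1/\mu},
\end{equation*}
which suprema out to the desired estimate. The remaining mixed endpoint cases (one of $\tilde\rho,\tilde\nu$ finite, the other infinite) are handled by replacing the corresponding factor in the H\"older step by its $L^\infty(ds/s)$ norm against the Lebesgue measure $ds/s$, which is nothing but $\|\cdot\|_{\rho,\infty}$ or $\|\cdot\|_{\nu,\infty}$.

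The argument is essentially routine once the rearrangement inequality is in hand; the only genuine subtlety is the passage $(fg)^*(t)\le f^*(t/2)g^*(t/2)$, and beyond that, the main bookkeeping nuisance is verifying that $\tilde\rho/\tilde\mu,\tilde\nu/\tilde\mu\in[1,\infty]$ are genuine H\"older conjugates in all admissible endpoint combinations. Since the assumption $\mu,\rho,\nu\in(1,\infty)$ ensures the Lorentz quasi-norms are equivalent to norms (via passing to $f^{**}$ if desired), no extra complication arises from the quasi-triangle inequality.
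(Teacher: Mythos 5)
Your proof is correct. Note that the paper does not prove Lemma~B at all: it is listed among the ``Preliminary lemmas'' as a known fact, with a pointer to Bergh--L\"ofstr\"om and to O'Neil, so there is no in-paper argument to compare against. What you have written is exactly the standard O'Neil-type proof from those references: the rearrangement inequality $(fg)^*(t)\le f^*(t/2)\,g^*(t/2)$ (justified correctly via right-continuity of the distribution function and the inclusion $\{|fg|>\alpha\beta\}\subset\{|f|>\alpha\}\cup\{|g|>\beta\}$), followed by the substitution $s=t/2$, the splitting $s^{1/\mu}=s^{1/\rho}s^{1/\nu}$, and H\"older on $(\mathbb R_+,ds/s)$ with conjugate exponents $\tilde\rho/\tilde\mu$, $\tilde\nu/\tilde\mu$. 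Your treatment of the endpoint cases is also right: the hypothesis forces $\tilde\rho=\tilde\nu=\infty$ whenever $\tilde\mu=\infty$, and the mixed cases with $\tilde\mu<\infty$ reduce to pulling out an $L^\infty(ds/s)$ factor, which is the $L^{\rho,\infty}$ or $L^{\nu,\infty}$ quasi-norm. The resulting constant $2^{1/\mu}$ is harmless given the $\lesssim$ in the statement.
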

							
The next corollary immediately follows from Lemma B.  						\begin{CorA}					
								Let  $\omega >0$, 
															$\mu, \nu \in (1,\infty)$  and  
								$\tilde \mu \in [1,\infty]$.  
														If
								$$
								\frac{1}{\mu} = \frac{\omega}{n} + \frac{1}{\nu},
								$$
								then the following hold. 
																\begin{align*}
								\Norm{| x |^{-\omega} f}{{{\mu, \tilde \mu}}} 		
								&\lesssim 			\| | x |^{-\omega}\|_{{{\frac{n}{\omega}, \infty} }}
								\Norm{f}{{{\nu,\tilde \mu}}}
								\lesssim 
								\Norm{f}{{{\nu,\tilde \mu}}}
								\end{align*}	
														\end{CorA}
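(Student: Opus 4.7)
The claim is an immediate consequence of Lemma B once one knows that $|x|^{-\omega}$ belongs to the weak-$L^{n/\omega}$ space with a norm depending only on $n$ and $\omega$. My plan is therefore a two-line reduction.

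First I would verify the membership $|x|^{-\omega}\in L^{n/\omega,\infty}(\R^n)$ by computing the distribution function directly. For $\tau>0$,
\begin{equation*}
m(\tau, |x|^{-\omega}) = \mu(\{x\in\R^n : |x|^{-\omega}>\tau\}) = \mu(\{x\in\R^n : |x|<\tau^{-1/\omega}\}) = \omega_n \tau^{-n/\omega},
\end{equation*}
where $\omega_n$ is the volume of the unit ball. Therefore
\begin{equation*}
\||x|^{-\omega}\|_{n/\omega,\infty} = \sup_{\tau>0} \tau\, m(\tau, |x|^{-\omega})^{\omega/n} = \omega_n^{\omega/n} < \infty.
\end{equation*}
Note the hypothesis $\mu,\nu\in(1,\infty)$ together with $1/\mu = \omega/n + 1/\nu$ forces $\omega/n<1-1/\nu<1$, so $n/\omega\in(1,\infty)$ and the Lorentz indices are admissible.

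Next I would apply Lemma B with the choice $\rho = n/\omega$, $\tilde\rho = \infty$, and $\tilde\nu = \tilde\mu$. The Hölder scaling relations $1/\mu = 1/\rho + 1/\nu$ and $1/\tilde\mu = 1/\tilde\rho + 1/\tilde\nu$ are satisfied by our assumption and by $1/\infty = 0$, respectively. Lemma B then yields
\begin{equation*}
\||x|^{-\omega} f\|_{\mu,\tilde\mu} \lesssim \||x|^{-\omega}\|_{n/\omega,\infty}\, \|f\|_{\nu,\tilde\mu},
\end{equation*}
which is the first inequality. The second inequality follows by absorbing the finite constant $\||x|^{-\omega}\|_{n/\omega,\infty} = \omega_n^{\omega/n}$ into the implicit constant of $\lesssim$.

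There is no serious obstacle here; the only point that requires a moment's care is checking that the indices satisfy the admissibility constraints of Lemma B, which as noted above follows automatically from the standing hypotheses. The proof thus amounts to the weak-$L^p$ computation for $|x|^{-\omega}$ plus one invocation of the Lorentz-space Hölder inequality.
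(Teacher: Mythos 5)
Your proposal is correct and follows exactly the route the paper takes: the paper states that the corollary "immediately follows from Lemma B," with the implicit ingredient being precisely the weak-$L^{n/\omega}$ membership of $|x|^{-\omega}$ that you verify by the distribution-function computation. Your check that the index constraints of Lemma B are met (in particular $n/\omega\in(1,\infty)$ and the choice $\tilde\rho=\infty$, $\tilde\nu=\tilde\mu$) is the only detail the paper leaves unstated, and you handle it correctly.
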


							\begin{lemmaC}
								Let $q \in (2,\infty)$, 
								and let $q^\prime $ be the dual exponent of $q$, that is, $\frac{1}{q} + \frac{1}{q^\prime} = 1$.  
								Let $\nu \in [1,\infty]$.  
								Then 
								$$
								\|\F[\varphi]\|_{{{q,\nu} }} 
								\lesssim 
								\|\varphi \|_{{{q^{\prime},\nu}}}. 
								$$			
							\end{lemmaC}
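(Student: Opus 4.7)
The claim is the Hausdorff--Young inequality on Lorentz spaces, and my plan is to obtain it by real interpolation of the two endpoint bounds for the Fourier transform.

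First I would record the two classical endpoint mappings. Plancherel's theorem says $\F : L^2 \to L^2$ is an isometry, i.e.\
\[
\|\F[\varphi]\|_{2} = \|\varphi\|_{2},
\]
while the definition of $\F$ via an absolutely convergent integral gives the trivial bound
\[
\|\F[\varphi]\|_{\infty} \leq \|\varphi\|_{1}.
\]
Thus $\F$ is a bounded sublinear operator from $L^1$ to $L^\infty$ and from $L^2$ to $L^2$.

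Next I would apply the real interpolation functor $(\,\cdot\,,\,\cdot\,)_{\theta,\nu}$ to the pair $(L^1,L^2)$ in the domain and to the pair $(L^\infty,L^2)$ in the target. For $q \in (2,\infty)$ with dual exponent $q'\in(1,2)$, choose $\theta = 2/q \in (0,1)$; then a direct computation gives
\[
\tfrac{1}{q'} = (1-\theta)\cdot 1 + \theta\cdot\tfrac{1}{2}, \qquad
\tfrac{1}{q} = (1-\theta)\cdot 0 + \theta\cdot\tfrac{1}{2}.
\]
By the standard identification of real interpolation spaces between Lebesgue spaces with Lorentz spaces (see \cite[section 1.3 and Chapter~5]{BL}), for any $\nu \in [1,\infty]$,
\[
(L^1, L^2)_{\theta,\nu} = L^{q',\nu}, \qquad (L^\infty, L^2)_{\theta,\nu} = L^{q,\nu}.
\]
The real interpolation theorem then yields
\[
\F : L^{q',\nu} \longrightarrow L^{q,\nu}
\]
boundedly, which is exactly the asserted inequality.

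The statement is essentially a bookkeeping exercise once the endpoints and the interpolation identification are in hand, so there is no real obstacle: the only points requiring care are (i) checking that $\theta = 2/q$ lies in $(0,1)$, which is guaranteed by the hypothesis $q \in (2,\infty)$, and (ii) verifying that the interpolation identity for Lorentz spaces holds in the form stated for the second Lorentz index $\nu$, which is precisely the content of the reference cited in the paper. Accordingly, I would simply write the proof as a one-paragraph application of real interpolation with a pointer to \cite{BL}.
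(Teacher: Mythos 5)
Your proof is correct. The paper does not actually prove Lemma~C: it is listed among the preliminary lemmas in Section~3 with a pointer to \cite[section 1.3]{BL} and \cite{O}, and the real-interpolation argument you give — interpolating $\F\colon L^1\to L^\infty$ (trivial bound) with $\F\colon L^2\to L^2$ (Plancherel) via $(L^1,L^2)_{\theta,\nu}=L^{q',\nu}$ and $(L^\infty,L^2)_{\theta,\nu}=L^{q,\nu}$ for $\theta=2/q$ — is precisely the standard derivation those references supply, so your write-up matches the intended justification.
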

											
							\begin{lemmaD}[Young's inequality]
								Let $q, \rho \in(1,2]$ such that  $\frac{1}{q} + \frac{1}{\rho} = \frac{3}{2}$.  
								Let $s,t \in [2,\infty)$ such that  $\frac{1}{s} + \frac{1}{t} = \frac{1}{2}$.  
								Then
								\begin{align*}
								&\Norm{\varphi*\psi }{{2}} 
								\lesssim			\Norm{\varphi}{{{\rho,s}}}
								\Norm{\psi}{{{q,t} }}. 
								\end{align*}				
								\end{lemmaD}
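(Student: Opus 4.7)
The plan is to derive the Lorentz-space Young inequality from three tools already present in the excerpt: Plancherel's identity, Hausdorff--Young in Lorentz spaces (Lemma C), and H\"older in Lorentz spaces (Lemma B). The identification $L^{2,2} = L^2$ from Lemma A is what allows the target $L^2$ norm to be placed inside the Lorentz framework.

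First I would check the exponent ranges. The constraint $\frac{1}{q} + \frac{1}{\rho} = \frac{3}{2}$ with $q, \rho > 1$ forces both $q < 2$ and $\rho < 2$ strictly, since $\rho = 2$ would entail $q = 1 \notin (1, 2]$, and symmetrically with $q$ and $\rho$ interchanged. Hence $\rho', q' \in (2, \infty)$, which is exactly the range covered by Lemma C. Applying Lemma C at secondary indices $s$ and $t$ respectively produces
\begin{align*}
\|\widehat{\varphi}\|_{\rho', s} \lesssim \|\varphi\|_{\rho, s},
\qquad
\|\widehat{\psi}\|_{q', t} \lesssim \|\psi\|_{q, t}.
\end{align*}

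Next I would pass to the Fourier side using Plancherel's identity together with the convolution identity $\widehat{\varphi * \psi} = c_n\, \widehat{\varphi}\, \widehat{\psi}$, so that $\|\varphi * \psi\|_2 \sim \|\widehat{\varphi}\, \widehat{\psi}\|_{2, 2}$. Then I would invoke Lemma B with $\mu = \tilde\mu = 2$, primary pair $(\rho', q')$, and secondary pair $(s, t)$. The primary balance $\frac{1}{\rho'} + \frac{1}{q'} = 2 - \frac{3}{2} = \frac{1}{2}$ and the secondary balance $\frac{1}{s} + \frac{1}{t} = \frac{1}{2}$ are exactly the hypotheses of Lemma B, and $s, t \in [2, \infty) \subset [1, \infty]$ lies in the admissible range for secondary indices. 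This gives $\|\widehat{\varphi}\, \widehat{\psi}\|_{2, 2} \lesssim \|\widehat{\varphi}\|_{\rho', s} \|\widehat{\psi}\|_{q', t}$, and combining with the two Hausdorff--Young estimates closes the argument.

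No step poses a substantive obstacle; the only care required is in the exponent bookkeeping, specifically the strict inequalities $q, \rho < 2$ that put the dual exponents inside the range of Lemma C, and the observation that the secondary balance $\frac{1}{s} + \frac{1}{t} = \frac{1}{2}$ is precisely what sends H\"older in Lorentz spaces into $L^{2, 2} = L^2$.
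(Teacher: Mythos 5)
Your proof is correct. Note, however, that the paper does not actually prove Lemma~D: it is listed among the preliminary facts on Lorentz spaces and quoted from the references (Bergh--L\"ofstr\"om and O'Neil), so there is no internal argument to compare against. The classical route behind the citation is O'Neil's convolution theorem, which is proved via rearrangement estimates and yields the general statement $\|\varphi*\psi\|_{\mu,\tilde\mu}\lesssim\|\varphi\|_{\rho,s}\|\psi\|_{q,t}$ for arbitrary target exponents with $\frac{1}{\mu}+1=\frac{1}{\rho}+\frac{1}{q}$. Your argument instead exploits the special target $L^{2}=L^{2,2}$: Plancherel converts the convolution into a pointwise product on the Fourier side, Lemma~C (Hausdorff--Young with secondary indices $s$ and $t$, legitimate because $\frac{1}{q}+\frac{1}{\rho}=\frac{3}{2}$ with $q,\rho>1$ forces $q,\rho<2$ strictly, hence $q',\rho'\in(2,\infty)$) controls $\hat\varphi$ and $\hat\psi$, and Lemma~B closes the estimate since $\frac{1}{\rho'}+\frac{1}{q'}=\frac{1}{2}$ and $\frac{1}{s}+\frac{1}{t}=\frac{1}{2}$. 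The exponent bookkeeping is all in order. What your approach buys is a short, self-contained derivation from the other stated preliminaries; what it gives up is generality, since it cannot reach targets other than $L^{2}$ --- which is all the lemma asserts, so this is harmless here. The only point worth making explicit is that the identity $\widehat{\varphi*\psi}=c\,\hat\varphi\,\hat\psi$ should be justified first for a dense class (e.g.\ simple or Schwartz functions, which are dense in $L^{\rho,s}$ and $L^{q,t}$ because $s,t<\infty$) and then extended by continuity; this is routine.
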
	
							
							\begin{lemmaE}[sharp Sobolev embedding theorem]		
								Let $ q \in [2, \infty)$ and $s \ge 0$.  If 
								\begin{equation*}
								\frac{n}{2} -s \le \frac{n}{q},
								\end{equation*}
								then
								$$
								H^s(\R^n) \subset L^{q,2}(\R^n). 
								$$
				 \end{lemmaE}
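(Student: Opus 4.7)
My approach is to reduce Lemma E to the critical (endpoint) case $s=\frac{n}{2}-\frac{n}{q}$ and then combine Hausdorff--Young in Lorentz spaces (Lemma C) with H\"older in Lorentz spaces (Lemma B), using the Bessel weight $(1+|\xi|^2)^{-s/2}$ as a weak-$L^{n/s}$ multiplier.

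First, the easy reduction. If $q=2$, then $L^{q,2}=L^{2,2}=L^2$ by Lemma A, and $H^s\hookrightarrow L^2$ is trivial for $s\ge 0$. Otherwise $q\in(2,\infty)$; set $s_0:=\frac{n}{2}-\frac{n}{q}\in(0,n/2)$. The hypothesis $\frac{n}{2}-s\le\frac{n}{q}$ gives $s\ge s_0\ge 0$, and inhomogeneous Sobolev spaces are monotone in $s$, so $H^s\hookrightarrow H^{s_0}$. It therefore suffices to prove the critical embedding $H^{s_0}\hookrightarrow L^{q,2}$.

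For the critical step, let $u\in H^{s_0}$ and $q':=q/(q-1)\in(1,2)$. By Lemma C,
\begin{equation*}
\|u\|_{L^{q,2}}\;\lesssim\;\|\mathcal F^{-1}u\|_{L^{q',2}}\;=\;\|\hat u\|_{L^{q',2}},
\end{equation*}
up to a constant coming from Fourier normalization. Factor $\hat u(\xi)=w(\xi)\,v(\xi)$ with $w(\xi):=(1+|\xi|^2)^{-s_0/2}$ and $v(\xi):=(1+|\xi|^2)^{s_0/2}\hat u(\xi)$, so that $\|v\|_{L^2}=\|u\|_{H^{s_0}}$. Now apply Lemma B with $(\rho,\tilde\rho)=(n/s_0,\infty)$, $(\nu,\tilde\nu)=(2,2)$, $(\mu,\tilde\mu)=(q',2)$: the identities $\tfrac1{q'}=\tfrac{s_0}{n}+\tfrac12$ and $\tfrac12=\tfrac1\infty+\tfrac12$ are exactly built into $s_0=\tfrac n2-\tfrac nq$, and all first Lorentz indices lie in $(1,\infty)$ (note $n/s_0>2$ since $s_0<n/2$). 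This yields
\begin{equation*}
\|\hat u\|_{L^{q',2}}\;\lesssim\;\|w\|_{L^{n/s_0,\infty}}\,\|v\|_{L^2}.
\end{equation*}

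It remains to check $w\in L^{n/s_0,\infty}$. Since $0< w\le 1$, the distribution function $m(\tau,w)$ vanishes for $\tau>1$; for $\tau\in(0,1]$ we have $m(\tau,w)=|\{|\xi|<(\tau^{-2/s_0}-1)^{1/2}\}|\sim C\tau^{-n/s_0}$ as $\tau\to 0^+$, so $\sup_\tau\tau\,m(\tau,w)^{s_0/n}<\infty$. Combining the displayed bounds gives $\|u\|_{L^{q,2}}\lesssim\|u\|_{H^{s_0}}\le\|u\|_{H^s}$. The argument is essentially a packaged scaling identity; the only point that requires care is the exponent bookkeeping that makes the critical scaling $s_0=\tfrac n2-\tfrac nq$ align simultaneously with the Hausdorff--Young duality $q\leftrightarrow q'$, the weak-type membership $w\in L^{n/s_0,\infty}$, and the admissibility of the second Lorentz indices $(2,\infty,2)$ in Lemma B.
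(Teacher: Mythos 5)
Your proof is correct. Note, however, that the paper never proves Lemma~E: it is listed in Section~3 among the preliminary facts on Lorentz spaces quoted from the references (Bergh--L\"{o}fstr\"{o}m and O'Neil), so there is no argument in the paper to compare against. What your write-up adds is a self-contained derivation that uses only the other stated preliminaries: reduce to the endpoint $s_0=\frac{n}{2}-\frac{n}{q}$ by monotonicity of $H^s$, pass to the Fourier side via Lemma~C with second index $2$, split off the Bessel weight $(1+|\xi|^2)^{-s_0/2}$, and absorb it by Lemma~B using its membership in $L^{n/s_0,\infty}$. The exponent bookkeeping all checks out: $\frac{1}{q'}=\frac{s_0}{n}+\frac12$ is exactly the definition of $s_0$, the first Lorentz indices $q'\in(1,2)$, $n/s_0>2$, $2$ are all admissible in Lemma~B, and your computation of the distribution function of the weight gives the required weak-type bound. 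This is essentially the classical proof of the Lorentz-refined Sobolev embedding, and it is a reasonable way to make the paper's toolkit internally closed.
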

		
	\section{Decay estimate for the kernels}
			
In this section, we estimate the kernel of the following linear wave equation with structural damping \eqref{LW}. 		
				
By Fourier transform, the equation \eqref{LW} is transformed to
				\begin{align*}
									\hat u_{tt}+|\xi|^{2\sigma} \hat u_t  +  |\xi|^2 \hat u=0 \quad (t > 0),  
					\qquad \hat u(0) =\hat u_0, \quad \hat u_t(0) =\hat u_1.  
				\end{align*}
		Hence the solution $u$ of \eqref{LW} is expressed as 	
			\begin{align}
			u(t,x)&= (K_0(t,\cdot)\ast u_0)(x) + (K_1(t,\cdot)\ast u_1)(x),\label{udef}
			\end{align}
		where
							\begin{align}
\widehat K_0(t,\xi)&=\frac{1}{\lambda_+(|\xi|) - \lambda_-(|\xi|)}(\lambda_+(|\xi|) e^{\lambda_-(|\xi|) t}-\lambda_-(|\xi|) e^{\lambda_+(|\xi|) t}),\label{K0def}
\\
\widehat K_1(t,\xi)&=\frac{1}{\lambda_+(|\xi|) - \lambda_-(|\xi|)}(e^{\lambda_+(|\xi|) t}- e^{\lambda_-(|\xi|) t}),
\label{K1def}
\\
\lambda_{\pm}(|\xi|) 
&=	\frac{1}{2}	\left( -|\xi|^{2\sigma} \pm \sqrt{|\xi|^{4\sigma} - 4|\xi|^2 }\right) 
					\label{lambda_def1}
						\\
					&= \begin{cases}
						\frac{1}{2}  |\xi|^{2 \sigma}\left(-1\pm \sqrt{1-4|\xi|^{2(1 - 2\sigma)} } \right) \ \ 
						\; \text{if} \quad |\xi|^{1 - 2\sigma} < \frac{1}{2},
												\\						
					 \frac{1}{2}|\xi|^{2\sigma} 	
					 \left( -1 \pm  i \sqrt{4|\xi|^{2(1 - 2\sigma)} -1} \right)  \ \ 	 \text{if} \quad |\xi|^{1 - 2\sigma} > \frac{1}{2}. 
							\end{cases}\label{lambda_def2}				
				\end{align}
	
				We divide $K_0$ and $K_1$ into
		\begin{align}
\widehat{K_1^{\pm}}(t,\xi) 
&:=  \pm \frac{e^{\lambda_{\pm}(|\xi|)t}}{\lambda_+(|\xi|) - \lambda_-(|\xi|)},
\label{K1+-def} 
\\
\widehat{K_0^{\pm}}(t,\xi) 
&:=  \mp \frac{
\lambda_{\mp}(|\xi|)	e^{\lambda_{\pm}(|\xi|)t}}{\lambda_+(|\xi|) - \lambda_-(|\xi|)} 
= -\lambda_{\mp}(\xi)\widehat{K_1^{\pm}}(t,\xi). 
\label{K0+-def}		\end{align}
				
Let  $\chi_{low}(\xi) \in C^\infty(\R^n)$ be a function such that $\chi_{low}(\xi) = 1$ for $|\xi| \le  2^{{ - \frac{3}{1 - 2 \sigma}}}$ and 
$\chi_{low}(\xi) = 0$ for $|\xi| \ge   2^{{ - \frac{2}{1 - 2 \sigma}}}$. 
								Let  $\chi_{high}(\xi) \in C^\infty(\R^n)$ be a function such that $\chi_{high}(\xi) = 1$ for $|\xi| \ge  2$ and 
			$\chi_{high}(\xi) = 0$ for $|\xi| \le  1$. 
		
			We put  
			$$
			\chi_{mid}(\xi) := 1 - \chi_{low}(\xi) - \chi_{high}(\xi), \quad
					\chi_{mh}(\xi) :=  1 - \chi_{low}(\xi) = \chi_{mid}(\xi) + \chi_{high}(\xi). 
			$$ 
			Here we note that 
			\begin{equation}
			\label{mid}
		\supp \chi_{mid} \subset \{\xi ; |\xi| \in [2^{{-\frac{3}{1 - 2\sigma} }}, 2] \}, \quad
			\supp \chi_{hm} \subset  \{\xi ; |\xi| \in [2^{{-\frac{3}{1 - 2\sigma} }}, \infty) \}. 
			\end{equation}
			We put
			\begin{equation*}\begin{aligned}
			K_{j,low}(t,x) &:= \F^{-1}[ \hat{K_j}(t,\cdot)\chi_{low}(\cdot) ],
			\\
		K_{j,mid}(t,x) &:= \F^{-1}[ \hat{K_j}(t,\cdot)\chi_{mid}(\cdot) ], 
		\\
		K_{j,high}(t,x) &:= \F^{-1}[ \hat{K_j}(t,\cdot)\chi_{high}(\cdot) ],
			\\
			K_{j,mh}(t,x) &:=	K_{j,mid}(t,x) +K_{j,high}(t,x), 
								\end{aligned}
			\end{equation*}			
	for $j = 0,1$.  
	Dividing the kernel into  
	\begin{equation*}
	\begin{aligned}
	K_j
		=K_{j,low} + K_{j,mid }  + K_{j,high}	= 	K_{j,l} +	K_{j,mh} 
	\end{aligned}
	\end{equation*}
	for $j = 0,1$, 	we estimate each part.  							
				
					\subsection{Estimate of the kernels for low frequency part}
				
			In this subsection, we consider low frequency region: 
	$|\xi| \leq 2^{{ - \frac{2}{1 - 2 \sigma}}}$.  				
	
				\begin{lemma}\label{weightlemma}
			Let $\alpha > -\frac{n}{2}$ and $\beta > 0$.  
			Let $a >  2^{{ - \frac{2}{1 - 2 \sigma}}}$.    
			Let $g(t,\rho)$ be a smooth function on $[0,\infty) \times (0,a)$ satisfying 
			\begin{align}\label{fg}
			& 
			\left|\pd{^k}{\rho^k} g(t,\rho)
		  \right| 						
			\lesssim
			\rho^{\alpha - k}
				e^{-\frac{1}{2}\rho^\beta t}
			\end{align}
			on $[0,\infty) \times (0,a)$ for every $k = 0,1,\cdots$.  Put
			$$
			K(t,x):= \F^{-1}[g(t,|\xi|) \chi_{low}](x). 
			$$
			Then for every $q_j \in [1,2)$ $(j = 0,1)$ and $\vartheta   \in [ 0,\frac{n}{2} + \alpha)$ satisfying					\begin{equation}\label{qass}
			\frac{1}{q_1} \ge \frac{1}{2} + \frac{\vartheta - \alpha  }{n}, \;
			\frac{1}{q_2} \ge \frac{1}{2} - \frac{\alpha}{n}, \;
			\end{equation}	
the following holds. 
\begin{align}\label{Kgamma0}
\Norm{| x |^\vartheta    \left(K(t,\cdot) * \varphi(\cdot) \right)}{{2}}
\lesssim
&\langle t \rangle ^{{ \frac{1}{\beta}
		\left(-n(\frac{1}{q_1} - \frac{1}{2}) + \vartheta   - \alpha \right) }} \|\varphi\|_{q_1}^\prime
\nonumber	\\
&+
\langle t \rangle ^{{ \frac{1}{\beta}
		\left(-n(\frac{1}{q_2} - \frac{1}{2}) - \alpha \right) }}	
\| | x |^\vartheta   \varphi\|_{q_2}^\prime,		
\end{align}	
where $\| \cdot \|^\prime_q$ denote 
\begin{equation}\label{normprime}
\| \cdot \|^\prime_q =
\begin{cases}
\|\cdot \|_1 \quad &\text{if} \quad q = 1
\\
\|\cdot \|_{q,2} \quad &\text{if} \quad q \in (1,2]. 
\end{cases}
\end{equation}				
\end{lemma}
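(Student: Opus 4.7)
My plan is to pass the weight $|x|^\vartheta$ through the convolution, reduce to weak-type estimates on $K(t,\cdot)$ and $|\cdot|^\vartheta K(t,\cdot)$, and apply the Lorentz-space Young inequality. The elementary bound $|x|^\vartheta \le C_\vartheta(|x-y|^\vartheta+|y|^\vartheta)$ (valid for $\vartheta\ge 0$) yields
\[
\||x|^\vartheta(K\ast\varphi)\|_2 \lesssim \|(|\cdot|^\vartheta K)\ast\varphi\|_2 + \|K\ast(|\cdot|^\vartheta\varphi)\|_2.
\]
Defining $\rho_j\in(1,2]$ by $\tfrac{1}{\rho_j}+\tfrac{1}{q_j}=\tfrac{3}{2}$ and applying Lemma~D at the endpoint Lorentz parameters $(s,t)=(\infty,2)$ (or classical Young with $L^2=L^{2,2}$ when $q_j=1$, $\rho_j=2$) reduces the proof to the weak-type kernel bounds
\[
\|K(t,\cdot)\|_{\rho_2,\infty}\lesssim\langle t\rangle^{-\frac{n+\alpha}{\beta}+\frac{n}{\beta\rho_2}},\qquad \||\cdot|^\vartheta K(t,\cdot)\|_{\rho_1,\infty}\lesssim\langle t\rangle^{\frac{\vartheta-n-\alpha}{\beta}+\frac{n}{\beta\rho_1}},
\]
which, after substituting $\tfrac{1}{\rho_j}=\tfrac{3}{2}-\tfrac{1}{q_j}$, give precisely the two exponents in~\eqref{Kgamma0}.

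\textbf{Kernel bounds via rescaling.} For $t\ge 1$, I would rescale the kernel: set
\[
\Phi(t,y):=t^{(n+\alpha)/\beta}K(t,t^{1/\beta}y)=\F^{-1}[G(t,\cdot)](y),\quad G(t,\eta):=t^{\alpha/\beta}g(t,t^{-1/\beta}|\eta|)\chi_{low}(t^{-1/\beta}\eta).
\]
Hypothesis~\eqref{fg} translates to uniform symbol bounds $|\partial_\eta^k G(t,\eta)|\lesssim |\eta|^{\alpha-k}e^{-|\eta|^\beta/2}$ on the support of $G$. Combining a direct integral estimate near $\eta=0$ with integration-by-parts in the Fourier integral at moderate frequencies gives the pointwise bound $|\Phi(t,y)|\lesssim\min\{1,|y|^{-n-\alpha}\}$ uniformly in $t\ge 1$. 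From this, the assumption $\alpha>-n/2$ secures $\Phi(t,\cdot)\in L^{\rho_2,\infty}$ whenever $\rho_2\ge n/(n+\alpha)$, and similarly $|\cdot|^\vartheta\Phi(t,\cdot)\in L^{\rho_1,\infty}$ whenever $\rho_1\ge n/(n+\alpha-\vartheta)$; both admissibility ranges coincide with~\eqref{qass} under $\tfrac{1}{\rho_j}+\tfrac{1}{q_j}=\tfrac{3}{2}$. Unscaling produces the advertised $\langle t\rangle$-factors. For $0<t\le 1$, the support of $G$ shrinks to $\{|\eta|\lesssim t^{1/\beta}\}$, and the same crude estimate of $\int|G|\,d\eta$ delivers $|\Phi(t,y)|\lesssim t^{(n+\alpha)/\beta}$ directly, which after unscaling yields $O(1)$ factors as needed on the short-time side.

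\textbf{Main obstacle.} The delicate step is the pointwise bound $|\Phi(t,y)|\lesssim|y|^{-n-\alpha}$ at $|y|\to\infty$ with the \emph{full fractional exponent} $n+\alpha$. Since $G(t,\cdot)$ is only $C^\infty$ away from $\eta=0$ and carries the non-smooth profile $|\eta|^\alpha$ near the origin, repeated integration by parts against $e^{iy\cdot\eta}$ produces only integer-order decay up to $[n+\alpha]$, leaving a fractional gap. To capture the remaining fractional order I would decompose $G(t,\eta)=|\eta|^\alpha\psi(\eta)+R(t,\eta)$, with $\psi$ a smooth cutoff localising near $\eta=0$ and $R$ a remainder that is Schwartz uniformly in $t$; the first piece inverse-Fourier-transforms to an explicit Riesz-type kernel decaying exactly like $|y|^{-n-\alpha}$, while the remainder contributes only rapidly decaying terms. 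Feeding the resulting kernel estimates into the Young step of the first paragraph then yields~\eqref{Kgamma0}.
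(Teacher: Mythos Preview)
Your overall framework---splitting $|x|^\vartheta(K\ast\varphi)$ and applying Young's inequality in Lorentz spaces---matches the paper's Step~3 exactly. The divergence is in how you obtain the kernel bounds $\||\cdot|^\vartheta K(t,\cdot)\|_{\rho_1,\infty}$ and $\|K(t,\cdot)\|_{\rho_2,\infty}$, and here your argument has a genuine gap. The decomposition $G(t,\eta)=|\eta|^\alpha\psi(\eta)+R(t,\eta)$ with $R$ Schwartz is not justified by hypothesis~\eqref{fg}, which gives only \emph{upper bounds} $|\partial_\eta^k G|\lesssim|\eta|^{\alpha-k}e^{-|\eta|^\beta/2}$ and no leading-order expansion. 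For instance, $g(t,\rho)=\rho^\alpha\sin(\log\rho)\,e^{-\rho^\beta t/2}$ satisfies~\eqref{fg}, yet near the origin the rescaled symbol behaves like $|\eta|^\alpha\sin(\log|\eta|+c_t)$, whose singularity cannot be removed by subtracting any fixed $c|\eta|^\alpha\psi(\eta)$; the remainder still has $k$-th derivatives blowing up like $|\eta|^{\alpha-k}$. So the fractional-order pointwise decay $|\Phi(t,y)|\lesssim|y|^{-n-\alpha}$, which you correctly flag as the crux (and which is needed at the endpoint of~\eqref{qass}), remains unproved by your route. It \emph{can} be obtained---e.g.\ via Bessel-function representations of radial Fourier transforms as in D'Abbicco--Ebert---but that is considerably more technical than what you sketch.

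The paper bypasses pointwise analysis altogether and stays on the Fourier side. Writing $|x|^\vartheta=|x|^{-\omega}\cdot|x|^{\vartheta+\omega}$ with $\vartheta+\omega$ an \emph{integer} and $\omega\in[n(\tfrac{1}{\kappa}-\tfrac12),\tfrac{n}{\kappa})$ (such $\omega$ exists because $n\ge2$), it uses $|x|^{-\omega}\in L^{n/\omega,\infty}$ (Corollary~A) and Hausdorff--Young in Lorentz spaces (Lemma~C) to reduce $\||\cdot|^\vartheta K(t,\cdot)\|_{\kappa,2}$ to $\|(-\Delta)^{(\vartheta+\omega)/2}\hat K(t,\cdot)\|_{\nu}$ with integer $\vartheta+\omega$. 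The latter is then bounded directly from~\eqref{fg} by a single radial integral, with the endpoint handled by the weak-$L^\nu$ bound coming from $|\partial_\xi^\gamma\hat K|\lesssim|\xi|^{\alpha-|\gamma|}$. No fractional oscillatory-integral estimate is needed.
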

	
Before proving Lemma \ref{weightlemma}, we state two corollaries:
\begin{Cor}\label{Kweightlemma}
					Let $\alpha > -\frac{n}{2}$ and $\beta > 0$.  
						Let $a >  2^{{ - \frac{2}{1 - 2 \sigma}}}$.    
			Let $\upsilon$ and $\lambda$ be smooth functions on some interval $(0,a)$ such that
					\begin{align}
					|\upsilon^{(j)}(\rho) | &\lesssim \rho^{\alpha -j}, \label{fassump} \\
					|\lambda^{(j)}(\rho) | &\lesssim \rho^{\beta -j}, \quad 
					-\lambda(\rho) \sim \rho^{\beta}
					\label{gassump}			
					\end{align}
					on $[0,\infty) \times (0,a)$ for every $j = 0,1,\cdots$.  Put 
				\begin{equation}\label{Kdef}
					K(t,x):= \F^{-1}[\upsilon(|\xi|) e^{\lambda(|\xi|)t} \chi_{low}]. 
					\end{equation}
				Then the conclusion of Lemma \ref{weightlemma} holds.   	
\end{Cor}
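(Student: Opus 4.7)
The plan is to reduce Corollary~\ref{Kweightlemma} to Lemma~\ref{weightlemma} by taking
\begin{equation*}
g(t,\rho) := \upsilon(\rho)\, e^{\lambda(\rho) t},
\end{equation*}
so that the kernel $K(t,x)$ in \eqref{Kdef} matches the form $\F^{-1}[g(t,|\xi|)\chi_{low}]$ used in Lemma~\ref{weightlemma}. The entire task is therefore to verify the derivative bound \eqref{fg} for this choice of $g$, after which \eqref{Kgamma0} follows immediately.

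First, I would apply the Leibniz rule to obtain
\begin{equation*}
\pd{^k}{\rho^k} g(t,\rho) = \sum_{j=0}^{k} \binom{k}{j}\, \upsilon^{(k-j)}(\rho)\, \pd{^j}{\rho^j} e^{\lambda(\rho) t},
\end{equation*}
where the factor $\upsilon^{(k-j)}(\rho)$ is controlled by $\rho^{\alpha - (k-j)}$ thanks to \eqref{fassump}. For the exponential factor I would invoke Faà di Bruno's formula, which expresses $\pd{^j}{\rho^j} e^{\lambda(\rho) t}$ as $e^{\lambda(\rho) t}$ times a sum of terms of the form $\prod_{i=1}^{j} (t\lambda^{(i)}(\rho))^{m_i}$ with $\sum_i i m_i = j$. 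Using the bound $|\lambda^{(i)}(\rho)| \lesssim \rho^{\beta - i}$ from \eqref{gassump}, each such product is controlled by $t^{M}\rho^{\beta M - j}$, where $M := \sum_i m_i$.

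The key observation is that $-\lambda(\rho) \sim \rho^{\beta}$ on the low-frequency support, so there exists $c > 0$ with $e^{\lambda(\rho) t} \le e^{-c\rho^\beta t}$. The elementary inequality $x^{M} e^{-cx/2} \lesssim 1$ for $x \ge 0$ then yields
\begin{equation*}
(t\rho^\beta)^{M} e^{-c\rho^\beta t} \lesssim e^{-c\rho^\beta t/2},
\end{equation*}
so that $|\pd{^j}{\rho^j} e^{\lambda(\rho) t}| \lesssim \rho^{-j} e^{-c\rho^\beta t/2}$. A harmless rescaling $t \mapsto (c)^{-1} t$ (which alters only the implicit constants in the final bound \eqref{Kgamma0}) converts the factor $e^{-c\rho^\beta t/2}$ into the exact $e^{-\frac{1}{2}\rho^\beta t}$ demanded by \eqref{fg}. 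Combining with $|\upsilon^{(k-j)}(\rho)| \lesssim \rho^{\alpha - (k-j)}$ gives $|\pd{^k}{\rho^k} g(t,\rho)| \lesssim \rho^{\alpha - k} e^{-\frac{1}{2}\rho^\beta t}$, and Lemma~\ref{weightlemma} delivers the desired conclusion.

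I do not expect any real obstacle: the argument is essentially a bookkeeping exercise combining the Leibniz rule, Faà di Bruno, and the standard "polynomial times decaying exponential is decaying exponential" trick. The only mild care required is tracking the constant in the exponential rate when passing from $e^{-c\rho^\beta t}$ to $e^{-\frac{1}{2}\rho^\beta t}$, which is resolved by the trivial time rescaling above.
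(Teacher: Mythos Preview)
Your proposal is correct and follows essentially the same route as the paper: set $g(t,\rho)=\upsilon(\rho)e^{\lambda(\rho)t}$, verify \eqref{fg} via Leibniz/Fa\`a di Bruno and the polynomial--times--exponential absorption, then invoke Lemma~\ref{weightlemma}. The paper compresses the computation into the single displayed estimate \eqref{cor1}, and handles the exponential constant simply by writing $e^{-\rho^\beta t}$ and absorbing the polynomial into half of it; your time-rescaling remark achieves the same end.
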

							
				In fact, we easily see	that
					\begin{align}\label{cor1}
					& 
					\left|\pd{^k}{\rho^k} 
									\left(	\upsilon(\rho)e^{\lambda(\rho)t}\right)
					\right| 						
					\lesssim
					\rho^{\alpha - k}
										\left(\sum_{j=0}^k (\rho^{\beta}t)^j \right)
					e^{-\rho^\beta t}
					\lesssim
					\rho^{\alpha - k}
									e^{-\frac{1}{2}\rho^\beta t}
					\end{align}
					on $[0,\infty) \times (0,a)$ for every $k = 0,1,\cdots$.   
					Hence, $g(t,\rho) = \upsilon(\rho)e^{\lambda(\rho)t}$ satisfies the assumption 	
					\eqref{fg} of Lemma \ref{weightlemma}, and thus the conclusion holds.  
		
\begin{Cor}\label{Kweightlemmadiff}
	Let $\alpha, \beta, \gamma$ be numbers such that $\alpha - \beta + \gamma > -\frac{n}{2}$, $\beta > 0$ and $\gamma > 0$.  
		Let $a >  2^{{ - \frac{2}{1 - 2 \sigma}}}$.    
	Let $\upsilon$ and $\lambda$ be smooth functions on $(0, a)$ such that
	\begin{align}
	|\upsilon^{(j)}(\rho) | &\lesssim \rho^{\alpha -j}, \label{fassumpdiff} \\
	|\lambda^{(j)}(\rho) | &\lesssim \rho^{\beta -j}, \quad 
	-\lambda(\rho) \sim \rho^{\beta}\label{gassumpdiff}	
	\\	
	|\mu^{(j)}(\rho) | &\lesssim \rho^{\gamma -j}, \quad 
	-\mu(\rho) \sim \rho^{\gamma}
	\label{muassump}		
	\end{align}
	on $(0,a)$ for every $j = 1,2,\cdots$.  Put
	$$
	K(t,x):= \F^{-1}[\upsilon(|\xi|) e^{\lambda(|\xi|)t} 
	(1 - e^{\mu(|\xi|)t})\chi_{low}]. 
	$$
Then for every $q_j \in [1,2)$ $(j = 1,2)$ and $\vartheta   \in [ 0,\frac{n}{2} + \alpha - \beta + \gamma)$ satisfying
\begin{equation}\label{qassdiff}
\frac{1}{q_1} \ge \frac{1}{2} + \frac{\vartheta - \alpha + \beta - \gamma }{n}, \;
\frac{1}{q_2} \ge \frac{1}{2} + \frac{-\alpha + \beta - \gamma}{n}, \;
\end{equation}	
the following holds. 	
	\begin{align}\label{Kgamma0diff}
\Norm{| x |^\vartheta    \left(K(t,\cdot) * \varphi(\cdot) \right)}{{2}}
\lesssim
&\langle t \rangle ^{{ \frac{1}{\beta}
		\left(-n(\frac{1}{q_1} - \frac{1}{2}) + \vartheta   - \alpha + \beta - \gamma \right) }} \|\varphi\|_{q_1}^\prime
\nonumber	\\
&+
\langle t \rangle ^{{ \frac{1}{\beta}
		\left(-n(\frac{1}{q_2} - \frac{1}{2}) - \alpha +  \beta - \gamma \right) }}	
\| | x |^\vartheta   \varphi\|_{q_2}^\prime. 		
\end{align}	
\end{Cor}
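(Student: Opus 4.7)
The plan is to deduce the corollary from Lemma~\ref{weightlemma} by showing that the symbol
\[
g(t,\rho) := \upsilon(\rho)\,e^{\lambda(\rho)t}\,\bigl(1 - e^{\mu(\rho)t}\bigr),
\]
for which $K(t,x) = \F^{-1}[g(t,|\xi|)\chi_{low}](x)$, satisfies the hypothesis \eqref{fg} of Lemma~\ref{weightlemma} with $\alpha$ replaced by the shifted exponent $\alpha' := \alpha - \beta + \gamma$ (keeping the same $\beta$). Under this substitution the restriction $\alpha > -\frac{n}{2}$ becomes $\alpha - \beta + \gamma > -\frac{n}{2}$ and the condition \eqref{qass} becomes exactly \eqref{qassdiff}, so the conclusion \eqref{Kgamma0} of Lemma~\ref{weightlemma} turns into the desired estimate \eqref{Kgamma0diff}.

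Everything therefore reduces to verifying the pointwise derivative bound
\[
\bigl|\partial_\rho^k g(t,\rho)\bigr| \lesssim \rho^{\alpha - \beta + \gamma - k}\,e^{-\frac{1}{2}\rho^\beta t}\qquad \text{on } [0,\infty)\times(0,a),\quad k = 0,1,2,\ldots.
\]
For this I would expand by Leibniz,
\[
\partial_\rho^k g = \sum_{j=0}^k \binom{k}{j}\,\partial_\rho^j\!\bigl[\upsilon(\rho) e^{\lambda(\rho)t}\bigr]\,\partial_\rho^{k-j}\!\bigl[1 - e^{\mu(\rho)t}\bigr].
\]
The first factor is controlled by $\rho^{\alpha-j} e^{-\frac{1}{2}\rho^\beta t}$ by the same computation \eqref{cor1} already used in Corollary~\ref{Kweightlemma}. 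The extra gain of $\rho^{\gamma-\beta}$ comes from the second factor: when $j = k$, the elementary inequality $|1 - e^{-x}| \le x$ gives $|1 - e^{\mu(\rho)t}| \lesssim \rho^\gamma t = \rho^{\gamma-\beta}(\rho^\beta t)$, and the surplus $\rho^\beta t$ is absorbed into the exponential via $xe^{-cx} \lesssim e^{-cx/2}$. When $j < k$, Fa\`a di Bruno's formula produces
\[
\partial_\rho^{k-j}\bigl[1 - e^{\mu(\rho)t}\bigr] = -e^{\mu(\rho)t}\sum_{\pi \in \Pi_{k-j}}\prod_{B \in \pi}\mu^{(|B|)}(\rho)\,t,
\]
i.e.\ a sum of terms of the form $\rho^{\gamma m -(k-j)} t^m e^{\mu(\rho)t}$ with $m = |\pi| \ge 1$.

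The handling of these terms splits into two cases. When $\gamma \le \beta$ I would absorb all the $(\rho^\gamma t)^m$ into $e^{\mu(\rho)t/2}$, getting $|\partial_\rho^{k-j}[1 - e^{\mu t}]| \lesssim \rho^{-(k-j)} e^{-c\rho^\gamma t}$ and hence a total bound $\rho^{\alpha-k} e^{-(c/2)\rho^\beta t}$, which upgrades to $\rho^{\alpha-\beta+\gamma-k} e^{-(c/2)\rho^\beta t}$ via the trivial estimate $\rho^{\beta-\gamma} \le a^{\beta-\gamma}$ on $(0,a)$. When $\gamma > \beta$ I would instead keep each $(\rho^\gamma t)^m$ split as $\rho^{m(\gamma-\beta)}(\rho^\beta t)^m$, absorb $(\rho^\beta t)^m e^{-c\rho^\beta t} \lesssim e^{-(c/2)\rho^\beta t}$, and use $\rho^{m(\gamma-\beta)} \le \rho^{\gamma-\beta}$ for $m \ge 1$ and $\rho < 1$ to reach the same conclusion.

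The main technical obstacle is precisely this bookkeeping—tracking the Fa\`a di Bruno combinatorics across the two cases $\gamma \le \beta$ and $\gamma > \beta$ so as to extract exactly the gain $\rho^{\gamma-\beta}$ uniformly in $t \in [0,\infty)$ and $\rho \in (0,a)$, while trading accumulated powers of $t$ for powers of $\rho^{-\beta}$ through the exponential. Once the pointwise bound above is established, Lemma~\ref{weightlemma} applied with $(\alpha',\beta)$ in place of $(\alpha,\beta)$ delivers the corollary directly.
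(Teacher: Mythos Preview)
Your overall strategy is exactly the paper's: verify that $g(t,\rho)=\upsilon(\rho)e^{\lambda(\rho)t}(1-e^{\mu(\rho)t})$ satisfies hypothesis \eqref{fg} of Lemma~\ref{weightlemma} with exponent $\alpha-\beta+\gamma$ in place of $\alpha$, then invoke that lemma directly. The difference lies only in how you obtain the pointwise derivative bound. The paper avoids your case split $\gamma\le\beta$ versus $\gamma>\beta$ entirely: from the Leibniz expansion it shows uniformly that
\[
\bigl|\partial_\rho^{k-j}(1-e^{\mu(\rho)t})\bigr|\lesssim \rho^{\gamma-(k-j)}\,t\qquad(0\le j\le k),
\]
by extracting a single factor $\rho^\gamma t$ and absorbing the remaining $\sum_{i}(\rho^\gamma t)^i$ into $e^{\mu(\rho)t}$. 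Combined with $\bigl|\partial_\rho^{j}(\upsilon e^{\lambda t})\bigr|\lesssim\rho^{\alpha-j}e^{-\frac{1}{2}\rho^\beta t}$ this gives $\bigl|\partial_\rho^k g\bigr|\lesssim\rho^{\alpha+\gamma-k}\,t\,e^{-\frac{1}{2}\rho^\beta t}\lesssim\rho^{\alpha-\beta+\gamma-k}e^{-\frac{1}{4}\rho^\beta t}$ in one stroke, with the single factor of $t$ traded for $\rho^{-\beta}$ at the very end. Your route reaches the same conclusion but with more bookkeeping; in particular the step $\rho^{m(\gamma-\beta)}\le\rho^{\gamma-\beta}$ in your $\gamma>\beta$ case tacitly uses $\rho<1$, which holds on the support of $\chi_{low}$ but not a priori on all of $(0,a)$ as Lemma~\ref{weightlemma} formally requires---so you would want to note that one may take $a<1$ without loss.
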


	\begin{remark}
D'Abbicco and Ebert \cite{AE1} considered  the  kernels:
	$$	K(t,x) = \F^{-1}[\upsilon(|\xi|) e^{\lambda(|\xi|)t} \chi_{low}](x),
	$$
	 where $\upsilon$ and $\lambda$ 
	satisfy the assumptions \eqref{fassump} and \eqref{gassump} for $\alpha > -1$ (see \cite[Lemma 3.1]{AE1}), and
$$
K(t,x) = \F^{-1}\left[\upsilon(|\xi|) e^{\lambda(|\xi|)t} 
\frac{1 - e^{\mu(|\xi|)t}}{\mu(|\xi|)t}\chi_{low}\right](x), 
$$
where 
$\upsilon$, $\lambda$ and $\mu$ satisfy \eqref{fassumpdiff}, \eqref{gassumpdiff} and \eqref{muassump}
for $\alpha > -1, \beta > 0, \gamma > 0$ (see \cite[Lemma 3.2]{AE1}),
and showed $L^p-L^q$ estimates of $\varphi \mapsto K(t,\cdot)*\varphi$ for $1 \le p \le q \le \infty$ such that
\begin{enumerate}
	\item $p \neq q$ if $\alpha = 0$ and $\upsilon$ is not a constant, 
	\item $\frac{1}{p} - \frac{1}{q} \ge - \frac{\alpha}{n}$  if $\alpha \in (-1,0)$,
\end{enumerate}
by using the description of kernels by Bessel functions. 

	In this paper, we show weighted $L^2$ estimates of $K(t,\cdot)*\varphi$ 
in a way different from \cite{AE1} by employing Lorentz spaces.
	\end{remark}

		\begin{proof}[Proof of Corollary \ref{Kweightlemmadiff}]		
			By the Leibniz rule, we have 
			\begin{align}
			&					\pd{^k}{\rho^k}
			\left(	\upsilon(\rho)e^{\lambda(\rho)t}(1 - e^{\mu(\rho)t})			
			\right)
			= \sum_{j=0}^k C_{k,j} 		
			\pd{^j}{\rho^j} (\upsilon(\rho)e^{\lambda(\rho)t})
			\pd{^{k-j}}{\rho^{k-j}}(1 - e^{\mu(\rho)t}). 
				\label{cor2}\end{align}
			By the assumption \eqref{muassump}, we have
			\begin{align}\label{cor3}
	\left|	\pd{^{k-j}}{\rho^{k-j}}(1 - e^{\mu(\rho)t})	\right| 
	& = \left| \pd{^{k-j}}{\rho^{k-j}} e^{\mu(\rho)t}\right|
	\nonumber \\
	\lesssim & \rho^{-(k-j)}
			\left(\sum_{i=1}^{k-j} (\rho^{\gamma}t)^i \right)
			e^{-\rho^\gamma t}	
			= \rho^{-k+j} \rho^\gamma t
			\sum_{i=0}^{k-j-1}(\rho^{\gamma}t)^i
			e^{-\rho^\gamma t}
		\nonumber		\\
			 \lesssim & \rho^{-k+j+\gamma}  t	e^{-\frac{\rho^\gamma t}{2}}
			\le \rho^{-k+j+\gamma}  t,
			\end{align}
			if $j \le k - 1$,  and
			\begin{equation}\label{cor4}
	\left|	\pd{^{k-j}}{\rho^{k-j}}(1 - e^{\mu(\rho)t})	\right| 
	=		| 1 - e^{\mu(\rho)t}| = |\mu(\rho)t  e^{\theta \mu(\rho)t} | \lesssim \rho^\gamma t,
			\end{equation}
			with $\theta \in (0,1)$ if $j = k$.  
			From  \eqref{cor2}, \eqref{cor1} with $k$ replaced by $j$, \eqref{cor3} and \eqref{cor4}, 
			it follows that   
			\begin{align*}
				& 
		\left|
		\pd{^k}{\rho^k} 
		\left(	\upsilon(\rho)e^{\lambda(\rho)t}(1 - e^{\mu(\rho)t})			
		\right)
		\right| 						
		 \lesssim \rho^{\alpha + \gamma - k}t	
		e^{-\frac{1}{2}\rho^\beta t}
		\lesssim \rho^{\alpha + \gamma - \beta - k}	
			e^{-\frac{1}{4}\rho^\beta t}
			\end{align*}
			on $(0,a)$.  
			Hence, $g(t,\rho) = \upsilon(\rho)e^{\lambda(\rho)t}(1 - e^{\mu(\rho)t})$ satisfies  the assumption 			
			 \eqref{fg} with $\alpha $ replaced by $\alpha - \beta + \gamma$, and therefore, Lemma \ref{weightlemma} implies the assertion.  
		\end{proof}

	Now we prove Lemma \ref{weightlemma}.  
			\begin{proof}[Proof of Lemma \ref{weightlemma}]		
												
	(Step 1)  Let $k$ be a non-negative integer and $\nu \in (0,\infty)$.  We  show that
				\begin{equation}	\label{Knu}
				\begin{aligned}               
				\left\|
				(-\Delta)^{\frac{k}{2}}  \hat K(t,\cdot)				
				\right\|_{{{\nu} }}						
				&	\lesssim	
				\langle t \rangle^{\frac{1}{\beta}(-\alpha + k - \frac{n}{\nu})}
							\end{aligned}
				\end{equation}
for every $t \ge 0$ if  $(-\alpha+ k)\nu < n$, and
					\begin{equation}	\label{Kinfty}
		\begin{aligned}
		\left\|
		(-\Delta)^{\frac{k}{2}}  \hat K(t,\cdot)				
		\right\|_{{{\nu,\infty} }}						
		&	\lesssim	
		\langle t \rangle^{\frac{1}{\beta}(-\alpha + k - \frac{n}{\nu})} = 1,		
		\end{aligned}
		\end{equation}
for every $t \ge 0$ if  $(-\alpha+ k)\nu =n$. 
				
		First, we assume that $(-\alpha+ k)\nu < n$. 
					Using the assumption \eqref{fg} and changing variables by $t^{1/\beta}\rho = r$, 
						we have		
			\begin{align}
			\left\|	(-\Delta)^{\frac{k}{2}}  \hat K(t,\cdot)			
			\right\|_{{{\nu} }}^{\nu}	
			&\lesssim 
			\left\|	\sum_{|\gamma| = k} \partial_{\xi}^\gamma  
			\hat K(t,\xi)			
			\right\|_{{\nu}}^{\nu}	
			\nonumber \\		
			&\lesssim	
			\int_{0}^a \rho^{(\alpha - k)\nu}
			e^{{-\frac{1}{2}\nu \rho^{\beta} t }}
			\rho^{n-1} d\rho	
			\label{K1+der1}		\\						
			&	= 
			t^{\frac{1}{\beta}((-\alpha + k) \nu - n)}
			\int_{0}^{t a} r^{-(-\alpha + k)\nu}
			e^{{-\frac{1}{2}\nu r^{\beta}  }}
				r^{n-1} dr				
			\nonumber			\\						
			&	\lesssim	
			t^{\frac{1}{\beta}((-\alpha + k) \nu - n)}. 
			\label{K1+der2}		\end{align}
				By \eqref{K1+der1}, we have 
			\begin{align*}
			\left\|	(-\Delta)^{\frac{k}{2}}  \hat  K(t,\cdot)				
			\right\|_{{{\nu} }}^{\nu}							
			&
			\lesssim	
				\int_{0}^a \rho^{(\alpha - k)\nu}
			\rho^{n-1} d\rho	< \infty,
			\end{align*}
					for $0 < t \le 1$,
	which together with \eqref{K1+der2} yields 	\eqref{Knu}.   

Next we assume that  $(-\alpha+ k)\nu = n$. 
			By \eqref{fg}, we have	
	\begin{align*}
			\left| \partial_{\xi}^\gamma \hat K(t,\xi)			
		\right|
		&
		\lesssim	
					|\xi|^{\alpha - |\gamma|}
									\end{align*}
					for every $t \ge 0$.  		Hence, 
						\begin{equation*}
						s 	\mu ( \{ \xi; | \partial_{\xi}^\gamma  \hat K(t,\xi)|	 > s \} )^{{\frac{1}{\nu}}}
						\lesssim s^{{1 -\frac{n}{(-\alpha +k)\nu} }} = 1,
						\end{equation*}
					if $|\gamma| = k$,  and therefore, 
									\begin{equation}	
						\begin{aligned}
						\left\|
						(-\Delta)^{\frac{k}{2}}  \hat K(t,\cdot)
						\right\|_{{\nu,\infty}}		
					\lesssim 
					\sum_{|\gamma| = k}	\left\|	 \partial_{\xi}^\gamma  
					\hat K(t,\xi)			
									\right\|_{\nu,\infty}	
					\nonumber 					
						&	\lesssim	1, 
						\end{aligned}
						\end{equation}
						for every $t \ge 0$,  
			that is, \eqref{Kinfty} holds in the case $(-\alpha+ k)\nu = n$.

					(Step 2)  Let $\vartheta   \in [0,\delta]$ and $\kappa \in (1,2]$.  
					We prove that 
					\begin{equation}
					\label{Kweight11}
					\begin{aligned}
								\Norm{|\cdot|^{\vartheta  } K(t,\cdot) }{{{\kappa, 2}}}
					&	\lesssim	
										\langle t \rangle^{\frac{1}{^\beta}(\vartheta - \alpha  - 	n( 1 - \frac{1}{\kappa}))}, 			
					\end{aligned}
					\end{equation}
				for every $t > 0$	if $-\alpha +\vartheta   < n(1 - \frac{1}{\kappa})$, and 					
				\begin{equation}\label{Kweight1}
				\begin{aligned}
				\Norm{|\cdot|^{\vartheta  } K(t,\cdot) }{{{\kappa, \infty}}}
				&	\lesssim	
					\langle t \rangle^{\frac{1}{^\beta}(\vartheta - \alpha  - 	n( 1 - \frac{1}{\kappa}))} = 1, 
				\end{aligned}
				\end{equation}
				for every $t > 0$	if	$-\alpha +\vartheta   = n(1 - \frac{1}{\kappa})$. 
				
						Let $\omega$ be a non-negative number 
						such that $\vartheta   + \omega$ becomes an integer and that
					\begin{equation}  \label{omega}
				n\left( \frac{1}{\kappa} -\frac{1}{2} \right) \le \omega < \frac{n}{\kappa}. 
					\end{equation}
					Since $n \ge 2$, we can take $\omega$ satisfying above conditions.  
					Let $\nu$ and its dual exponent $\nu^\prime$ be the numbers defined by
					\begin{equation}\label{kappa_nu}
					\frac{1}{\kappa} = \frac{\omega}{n}	 + 1 - \frac{1}{\nu}
						= \frac{\omega}{n} + \frac{1}{\nu^\prime}. 
					\end{equation}
		Since $0 < 1/\nu^\prime = 1/\kappa - \omega/n \le 1/2$ by the assumption \eqref{omega}, 
	we have 
	\begin{equation}\label{nu<nuprime}
	1 < \nu \le 2 \le \nu^\prime < \infty. 
	\end{equation} 
									
Now we prove \eqref{Kweight11}	under the assumption $-\alpha +\vartheta   < n(1 - \frac{1}{\kappa})$.  		  
									By Corollary A and Lemmas A and C together with the relation \eqref{nu<nuprime}, 
										we  have
										\begin{align}
						 \Norm{|\cdot|^{\vartheta  } K(t,\cdot) }{{{\kappa, 2}}}
						&=\Norm{|\cdot|^{-\omega}|\cdot|^{\omega + \vartheta  }	K(t,\cdot) }{{{\kappa, 2}}}
						\lesssim 
						\Norm{|\cdot|^{-\omega}}
						{{{{\frac{n}{\omega},\infty}} }}
							\Norm{	|\cdot|^{\vartheta   + \omega} K(t,\cdot) }{{{\nu^\prime,2}}}
						\nonumber	\\
						&
						\lesssim 
						\Norm{	|\cdot|^{\vartheta   + \omega}	K(t,\cdot) }{{{\nu^\prime,2}}}
						= 	\Norm{	
							\F^{-1}[(-\Delta)^{\frac{\vartheta  +\omega}{2}} \hat K(t,\cdot)]  }{{{\nu^\prime,2}}}
						\nonumber	\\					&					
					\lesssim	\Norm{	
						(-\Delta)^{\frac{\vartheta  +\omega}{2}} \hat K(t,\cdot) }{{{\nu,2}}}
						\nonumber	\\
						&\lesssim	\Norm{	(-\Delta)^{\frac{\vartheta  +\omega}{2}} \hat K(t,\cdot) }{{{\nu,\nu}}}
							=\Norm{						(-\Delta)^{\frac{\vartheta  +\omega}{2}} \hat K(t,\cdot) }{{{\nu}}}. 							
					\label{KLmu}	\end{align}														
								Since the assumption $-\alpha +\vartheta   < n(1 - \frac{1}{\kappa})$ and \eqref{kappa_nu} imply that  $- \alpha + \vartheta + \omega < \frac{n}{\nu}$,  we can take   $k = \vartheta   + \omega$ in \eqref{Knu}.  
					 Substituting the inequality into \eqref{KLmu}, we obtain  \eqref{Kweight11}.  
			
				Next we prove \eqref{Kweight1}	under the assumption $-\alpha +\vartheta   = n(1 - \frac{1}{\kappa})$.  
				By Corollary A and Lemma C together with  \eqref{nu<nuprime}, we have 

\begin{align}
			\Norm{|\cdot|^{\vartheta  } K(t,\cdot) }{{{\kappa,\infty}}}
			&=\Norm{|\cdot|^{-\omega}
							|\cdot|^{\omega + \vartheta  }
					K(t,\cdot) }{{{\kappa,\infty}}}
\nonumber \\
&	\lesssim 
				\Norm{	|\cdot|^{\vartheta   + \omega}
					K(t,\cdot) }{{{\nu^\prime,\infty}}}
				= 	\Norm{	
					\F^{-1}[(-\Delta)^{\frac{\vartheta  +\omega}{2}} \hat K(t,\cdot)] }{{{\nu^\prime,\infty}}}
\nonumber\\
&	\lesssim 	\Norm{	
					(-\Delta)^{\frac{\vartheta  +\omega}{2}} \hat K(t,\cdot) }{{{\nu,\infty}}}. 											\label{kappainfty}
				\end{align}				
					Since the assumption $-\alpha +\vartheta   = n(1 - \frac{1}{\kappa})$ means 
					$ - \alpha + \vartheta  + \omega = \frac{n}{\nu}$, 
					 we can take   $k = \vartheta   + \omega$ in 
				\eqref{Kinfty}.  Substituting the inequality into \eqref{kappainfty}, we obtain \eqref{Kweight1}.  
		
		(Step 3) 			
		We define $r_j \in (1,2]$ by $\frac{1}{q_j} + \frac{1}{r_j} = \frac{3}{2}$ $(j = 1,2)$.  		
		We estimate each term of the right-hand side of  
		\begin{align}\label{final}
		\Norm{|\cdot|^{\vartheta  }(K(t,\cdot) * \varphi) }{{2}} 
		& \lesssim 
		\Norm{(|\cdot|^{\vartheta  } K(t,\cdot) )* \varphi}{{2}}
		+\Norm{K(t,\cdot) *(|\cdot|^\vartheta     \varphi)}{{2}}.  
		\end{align}				
If $q_1 \in (1,2)$,	Lemma D yields			
		\begin{align}\label{Young21}
						\Norm{(|\cdot|^{\vartheta  } K(t,\cdot))* \varphi}{{2}}
									& \lesssim 
			\Norm{|\cdot|^{\vartheta  } K(t,\cdot) }{{{r_1,\infty}}}
			\Norm{\varphi}{{{q_1,2} }}.	
	           				\end{align}	
The assumption \eqref{qass} implies 
$-\alpha +\vartheta   \le  n (\frac{1}{q_1} - \frac{1}{2}) = n(1 - \frac{1}{r_1})$.  
Hence, 	noting that $\Norm{|\cdot|^{\vartheta  } K(t,\cdot) }{{{r_1, \infty}}} \le 
\Norm{|\cdot|^{\vartheta  } K(t,\cdot) }{{{r_1, 2}}}$ and substituting \eqref{Kweight11}  or \eqref{Kweight1} with $(\vartheta  , \kappa) =(\vartheta  , r_1)$ into \eqref{Young21}, we obtain
\begin{align}
\label{Young211}
		\Norm{(|\cdot|^{\vartheta  } K(t,\cdot) )* \varphi}{{2}}
& \lesssim 
\langle t \rangle ^{{ \frac{1}{\beta}
		\left(-n(\frac{1}{q_1} - \frac{1}{2}) + \vartheta   - \alpha \right) }} \|\varphi\|_{q_1}^\prime.
\end{align}
	In the case $q_1 = 1$,  Young's inequality yields
\begin{align}\label{Young333}
\Norm{(|\cdot|^{\vartheta  } K(t,\cdot))* \varphi}{{2}}
& \lesssim 
\Norm{|\cdot|^{\vartheta  } K(t,\cdot) }{2}
\Norm{\varphi}{{{1} }}		
= \Norm{|\cdot|^{\vartheta  } K(t,\cdot) }{{2,2}}
\Norm{\varphi}{{{1} }}.	
\end{align}				
The assumption $\vartheta < \frac{n}{2} + \alpha$ means  
$ - \alpha + \vartheta <  \frac{n}{2} = n(1 - \frac{1}{2})$.   
Hence, substituting \eqref{Kweight11}  
with $(\vartheta  , \kappa) =(\vartheta, 2)$ into  \eqref{Young333},
we see that \eqref{Young211} holds also for $q_1 = 1$.  

We can estimate the second term of \eqref{final} in the same way: 
If $q_2 \in (1,2)$, Lemma D yields
\begin{align}\label{Young22} 
\Norm{K(t,\cdot) *(|\cdot|^\vartheta     \varphi)}{{2}}
& \lesssim 
\Norm{K(t,\cdot)}{{{r_2,\infty} }}
\Norm{|\cdot|^\vartheta     \varphi}{{{q_2,2}}}.  
\end{align}
The assumption \eqref{qass} implies 
$-\alpha \le  n (\frac{1}{q_2} - \frac{1}{2}) = n(1 - \frac{1}{r_2})$.  
Hence, 	substituting \eqref{Kweight11}  or \eqref{Kweight1} with $(\vartheta, \kappa)  =(0, r_2)$ into \eqref{Young22}, we obtain
\begin{align}                 
\label{Young212}
\Norm{K *(|\cdot|^\vartheta     \varphi)}{{2}}
& \lesssim 
\langle t \rangle ^{{ \frac{1}{\beta}
		\left(-n(\frac{1}{q_2} - \frac{1}{2}) - \alpha \right) }}	
\| | x |^\vartheta   \varphi\|_{q_2}^\prime.                            
\end{align}	
In the case $q_2 = 1$,  Young's inequality yields
\begin{align}
\label{Young3333}
\Norm{K(t,\cdot) *(|\cdot|^\vartheta     \varphi)}{{2}}
& \lesssim 
\Norm{K(t,\cdot)}{{2}}
\Norm{|\cdot|^\vartheta     \varphi}{1} 
= \Norm{K(t,\cdot)}{{2,2}}
\Norm{|\cdot|^\vartheta     \varphi}{1}.                            
\end{align}				
Since $-\alpha <  \frac{n}{2} = n (1 - \frac{1}{2})$, we have \eqref{Kweight11}  
with $(\vartheta, \kappa) =(0, 2)$, which together with \eqref{Young3333} yields \eqref{Young212} with $q_2 = 1$. 

Hence, \eqref{Young211} and \eqref{Young212} hold for every case.  
Substituting \eqref{Young211} and \eqref{Young212} into \eqref{final}, we obtain \eqref{Kgamma0}. 
	 		\end{proof}				
	
		\begin{lemma}\label{Kweight}
			Assume that $0 \le s_2 \le s_1 $ and
					$\vartheta \ge 0$  satisfy 
			$\vartheta - s_1 + s_2 < \frac{n}{2} - 2 \sigma$. 	
		If $q_j \in [1,2)$  $(j = 1,2,3,4)$ satisfy 
		\begin{equation}\label{qass2}
			\frac{1}{q_1} \ge \frac{1}{2} + \frac{2\sigma + \vartheta - s_1 + s_2 }{n}, \;
			\frac{1}{q_2} \ge \frac{1}{2} + \frac{2 \sigma - s_1 + s_2}{n}, \;
			\frac{1}{q_3} \ge \frac{1}{2} + \frac{ \vartheta - s_1 + s_2 }{n}, \;
				\end{equation}	
	then the following hold provided the right-hand sides are finite:
		\begin{align}
	&		\label{K1lowest+} 
		\Norm{|\cdot|^{\vartheta}(-\Delta)^{\frac{s_1}{2}}   
			\left(\F^{-1}[\hat K_1^+(t,\cdot) \chi_{low}]  * \varphi \right)}{{2}}
	\nonumber	\\
	& \quad \lesssim 
			\langle t \rangle ^{{ \frac{1}{1 - \sigma} \left(-\frac{n}{2}(\frac{1}{q_1} - \frac{1}{2}) + \frac{\vartheta - s_1 + s_2 }{2} + \sigma \right) }} \| (-\Delta)^{\frac{s_2}{2}}\varphi \|_{{{q_1}}}^\prime
	\nonumber	\\
	&	\qquad	+
			\langle t \rangle ^{{ \frac{1}{1 - \sigma} 
			\left(-\frac{n}{2}(\frac{1}{{q_2}} - \frac{1}{2}) - \frac{s}{2} +  \sigma \right) }} 
			\||\cdot|^{\vartheta}(-\Delta)^{\frac{s_2}{2}}\varphi \|_{{{{q_2}} }}^\prime,	
	\\
		&	\Norm{|\cdot|^{\vartheta} (-\Delta)^{\frac{s_1}{2}}   \left(\F^{-1}[\hat K_1^-(t,\cdot) \chi_{low}]  * \varphi \right)}{{2}}
	\nonumber \\
	& \quad		\lesssim 
			\langle t \rangle ^{{ \frac{1}{ \sigma} \left(-\frac{n}{2}(\frac{1}{q_1} - \frac{1}{2}) + \frac{{\vartheta - s_1 + s_2} }{2} + \sigma \right) }} \| (-\Delta)^{\frac{s_2}{2}}\varphi \|_{{{q_1}}}^\prime
			\nonumber	\\
			& \qquad +
			\langle t \rangle ^{{ \frac{1}{\sigma} \left(-\frac{n}{2}(\frac{1}{{q_2}} - \frac{1}{2}) - \frac{s}{2} +  \sigma \right) }} 
			\||\cdot|^{\vartheta} (-\Delta)^{\frac{s_2}{2}}\varphi \|_{{{{q_2}} }}^\prime,	
			\label{K1lowest-}
	\\		
		&	\Norm{|\cdot|^{\vartheta} (-\Delta)^{\frac{s_1}{2}}   \left(K_{1,low}(t,\cdot) * \varphi \right)}{{2}}
	\nonumber	\\
		&\quad 	\lesssim
			\langle t \rangle ^{{ \frac{1}{1 - \sigma} \left(-\frac{n}{2}(\frac{1}{q_1} - \frac{1}{2}) + \frac{{\vartheta - s_1 + s_2} }{2} + \sigma \right) }} \| (-\Delta)^{\frac{s_2}{2}}\varphi \|_{{{q_1}}}^\prime
			\nonumber	\\
			& \qquad +
			\langle t \rangle ^{{ \frac{1}{1 - \sigma} \left(-\frac{n}{2}(\frac{1}{{q_2}} - \frac{1}{2}) - \frac{s}{2} +  \sigma \right) }} \||\cdot|^{\vartheta} (-\Delta)^{\frac{s_2}{2}}\varphi \|_{{{{q_2}} }}^\prime,
			\label{K1lowweight}		
					\\
			\label{K0lowest+} 
		&	\Norm{|\cdot|^{\vartheta} (-\Delta)^{\frac{s_1}{2}}   \left(\F^{-1}[\hat K_0^+(t,\cdot) \chi_{low}]  * \varphi \right)}{{2}}
		\nonumber	
		\\
			& \quad  \lesssim \langle t \rangle ^{{ \frac{1}{1 - \sigma} \left(-\frac{n}{2}(\frac{1}{q_3} - \frac{1}{2}) + \frac{{\vartheta - s_1 + s_2} }{2}  \right) }} \| (-\Delta)^{\frac{s_2}{2}}\varphi \|_{{{q_3}}}^\prime
			\nonumber	\\
			& \qquad +
			\langle t \rangle ^{{ \frac{1}{1 - \sigma} \left(-\frac{n}{2}(\frac{1}{{q_4}} - \frac{1}{2}) - \frac{s}{2} \right) }} \||\cdot|^{\vartheta} (-\Delta)^{\frac{s_2}{2}}\varphi \|_				{{q_4}}^\prime,	
				\\
			\label{K0lowest-} 
		&	\Norm{|\cdot|^{\vartheta} (-\Delta)^{\frac{s_1}{2}}   \left(\F^{-1}[\hat K_0^-(t,\cdot) \chi_{low}]  * \varphi \right)}{{2}}
		\nonumber		\\
		& \quad 	\lesssim \langle t \rangle ^{{ \frac{1}{\sigma} \left(-\frac{n}{2}(\frac{1}{q_3} - \frac{1}{2}) + \frac{{\vartheta - s_1 + s_2} }{2} + 2 \sigma - 1 \right) }} \| (-\Delta)^{\frac{s_2}{2}}\varphi \|_{{{q_3}}}^\prime
			\nonumber	\\
	& \qquad +
			\langle t \rangle ^{{ \frac{1}{\sigma} \left(-\frac{n}{2}(\frac{1}{{q_4}} - \frac{1}{2} ) - \frac{s}{2}+ 2 \sigma - 1)  \right) }} \||\cdot|^{\vartheta} (-\Delta)^{\frac{s_2}{2}}\varphi \|_{{{{q_4}} }}^\prime,	
			\\
	&		\Norm{|\cdot|^{\vartheta} (-\Delta)^{\frac{s_1}{2}}   \left(K_{0,low}(t,\cdot) * \varphi \right) }{{2}}
	\nonumber
	\\	& \quad 	\lesssim
		 \langle t \rangle ^{{ \frac{1}{1 - \sigma} \left(-\frac{n}{2}(\frac{1}{q_3} - \frac{1}{2}) + \frac{{\vartheta - s_1 + s_2} }{2}  \right) }} \| (-\Delta)^{\frac{s_2}{2}}\varphi \|_{{{q_3}}}^\prime
			\nonumber	\\
			& \qquad +
			\langle t \rangle ^{{ \frac{1}{1 - \sigma} \left(-\frac{n}{2}(\frac{1}{{q_4}} - \frac{1}{2}) - \frac{s}{2} \right) }} \||\cdot|^{\vartheta} (-\Delta)^{\frac{s_2}{2}}\varphi \|_{{{{q_4}} }}^\prime,	
			\label{low0weight}		
		\end{align}		
		where $\| \cdot \|^\prime_q$ is defined by \eqref{normprime}. 
	\end{lemma}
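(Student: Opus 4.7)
The plan is to reduce each of the six bounds to a direct invocation of Corollary \ref{Kweightlemma}. Using the identity
\begin{equation*}
(-\Delta)^{s_1/2}(K*\varphi) = \bigl((-\Delta)^{(s_1-s_2)/2}K\bigr)*\bigl((-\Delta)^{s_2/2}\varphi\bigr),
\end{equation*}
I absorb $(-\Delta)^{s_2/2}$ into $\varphi$ and treat $\widetilde K:=(-\Delta)^{(s_1-s_2)/2}K$ as the kernel; on the Fourier side this multiplies $\widehat K$ by $|\xi|^{s_1-s_2}$, which simply shifts the effective exponent $\alpha$ in the hypothesis \eqref{fassump} of Corollary \ref{Kweightlemma} by $s_1-s_2$.

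Next, I identify $(\upsilon,\lambda)$ and hence the effective $(\alpha,\beta)$ for each of the four sign-pure kernels on the low-frequency region $\rho\in(0,a)$ with $a=2^{-2/(1-2\sigma)}$. Setting $h(\rho):=\sqrt{1-4\rho^{2(1-2\sigma)}}$, one has $h(\rho)\in[\tfrac{\sqrt 3}{2},1)$ smoothly with $|h^{(j)}(\rho)|\lesssim \rho^{-j}$, and from $\lambda_\pm(\rho)=\tfrac12\rho^{2\sigma}(-1\pm h(\rho))$ the Leibniz rule yields
\begin{equation*}
\lambda_+(\rho)\sim -\rho^{2(1-\sigma)},\qquad \lambda_-(\rho)\sim -\rho^{2\sigma},\qquad \lambda_+(\rho)-\lambda_-(\rho)\sim \rho^{2\sigma},
\end{equation*}
with the expected derivative scaling. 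For $\widehat{K_1^\pm}\chi_{low}$, the multiplier $\pm 1/(\lambda_+-\lambda_-)$ satisfies \eqref{fassump} with exponent $-2\sigma$; after the shift, $\alpha=s_1-s_2-2\sigma$, and the exponential gives $\beta=2(1-\sigma)$ for the $+$ sign and $\beta=2\sigma$ for the $-$ sign. Feeding these into \eqref{Kgamma0} matches \eqref{K1lowest+} and \eqref{K1lowest-} term-for-term. For $\widehat{K_0^+}\chi_{low}$ the prefactor $-\lambda_-/(\lambda_+-\lambda_-)$ is smooth and bounded away from $0$ on $(0,a)$, yielding $\alpha=s_1-s_2$, $\beta=2(1-\sigma)$ and hence \eqref{K0lowest+}; for $\widehat{K_0^-}\chi_{low}$ the prefactor $\lambda_+/(\lambda_+-\lambda_-)\sim -\rho^{2(1-2\sigma)}$ gives $\alpha=s_1-s_2+2(1-2\sigma)$, $\beta=2\sigma$ and hence \eqref{K0lowest-}. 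In each case, the admissibility $\vartheta\in[0,\tfrac{n}{2}+\alpha)$ and the compatibility of the indices with \eqref{qass} follow from the hypothesis $\vartheta-s_1+s_2<\tfrac{n}{2}-2\sigma$ and from \eqref{qass2}.

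For the combined kernels, I simply write $K_{j,low}=\mathcal F^{-1}[\widehat{K_j^+}\chi_{low}]+\mathcal F^{-1}[\widehat{K_j^-}\chi_{low}]$ and sum the two bounds already obtained. Because $\sigma<1/2$ one has $\tfrac{1}{\sigma}>\tfrac{1}{1-\sigma}$; moreover \eqref{qass2} forces the inner quantity $A$ appearing in each prefactor to satisfy $A\le 0$, and the elementary inequality $(A+2\sigma-1)/\sigma\le A/(1-\sigma)$ (valid whenever $A\le 1-\sigma$) shows that in both cases $j=0,1$ the $K^-$-contribution decays at least as fast as the $K^+$-contribution in $\langle t\rangle$. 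Hence \eqref{K1lowweight} and \eqref{low0weight} drop out.

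The only genuinely non-routine step is the rigorous verification of the derivative scalings of $h$, $\lambda_\pm$, $1/(\lambda_+-\lambda_-)$, and $\lambda_\mp/(\lambda_+-\lambda_-)$ on $(0,a)$; once these are in hand, everything else is bookkeeping of exponents in the general weighted bound of Corollary \ref{Kweightlemma}.
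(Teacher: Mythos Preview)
Your proposal is correct and follows essentially the same route as the paper: reduce to $s_2=0$ by absorbing $(-\Delta)^{s_2/2}$ into $\varphi$, then apply Corollary~\ref{Kweightlemma} to each of the four sign-pure kernels with the pairs $(\alpha,\beta)$ you list (these coincide with the paper's choices), and finally combine the $\pm$ pieces using $\sigma<\tfrac12$. Your discussion of the summation step is in fact slightly more explicit than the paper's, which simply invokes $\sigma<1-\sigma$ without writing out the comparison of exponents; note however that for $j=1$ the relevant comparison is $A/\sigma\le A/(1-\sigma)$ (requiring $A\le 0$), while your stated inequality $(A+2\sigma-1)/\sigma\le A/(1-\sigma)$ is the one needed for $j=0$, so you should state both rather than a single inequality covering ``both cases''.
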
		
	
	\begin{proof}
Since  
\begin{equation*}
\begin{aligned}
&\Norm{|\cdot|^{\vartheta}(-\Delta)^{\frac{s_1}{2}}   
	\left(\F^{-1}[\hat K(t,\cdot) \chi_{low}]  * \varphi \right)}{{2}}
\\
&=\Norm{|\cdot|^\vartheta 
	\F^{-1} [|\xi|^{s_1-s_2} \hat K(t,\xi) \chi_{low} |\xi|^{s_2} \hat \varphi(\xi)]  
	}{{2}}
\\
&= \Norm{|\cdot|^{\vartheta}(-\Delta)^{\frac{s_1-s_2}{2}}   
\F^{-1} [\hat K(t,\cdot)\chi_{low}]*(-\Delta) ^{{\frac{s_2}{2} }} \varphi }{2},
\end{aligned}     
\end{equation*}		
the conclusion reduces to the case $s_2 = 0$ by taking $s_1 - s_2$ and 
$(-\Delta) ^{s_2} \varphi $ as $s_1$ and $\varphi$ respectively.  

Let $\lambda_\pm$ be the functions defined by \eqref{lambda_def1}.  
From \eqref{lambda_def2}, it follows that 
\begin{align}
-2	\rho^{2 - 2 \sigma} \le 
\lambda_+(\rho) 
&= -\frac{2  \rho^{2 - 2 \sigma}}
{1 +\sqrt{1-4\rho^{2(1 - 2\sigma)} } } 
\le  - \rho^{2 - 2 \sigma},  
\label{lambda+}	\quad 
\\
- \rho^{2 \sigma} \le \lambda_- (\rho) & \le  -\frac{1}{2} \rho^{2 \sigma},
\label{lambda-}
\\
\lambda_+(\rho) - \lambda_- (\rho) & 
= \rho^{2 \sigma} \sqrt{1-4\rho^{2(1 - 2\sigma)} } 
\sim \rho^{2 \sigma},
\label{lambda+-}
\end{align}
on the support of $\chi_{low}$. 
	 
We first prove	\eqref{K1lowest+}.  
It is written that
\begin{align*}
	(-\Delta)^{\frac{s_1}{2}}  
	\left(\F^{-1}[\hat K_1^+(t,\cdot) \chi_{low}]  * \varphi \right)
	&= \F^{-1}[|\xi|^{s_1} \hat K_1^+(t,\cdot) \chi_{low}]* 
		\varphi,
\end{align*}
where $K_1^+$ is defined by \eqref{K1+-def}.  
By definition,  	
$K(t,x) = \F^{-1}[|\xi|^{s_1} \hat K_1^+(t,\cdot) \chi_{low}]$		
has the form \eqref{Kdef} with 
		$$
		\upsilon(\rho) = \frac{\rho^{s_1}}{\lambda_+(\rho) - \lambda_-(\rho)} \chi_{low}, \quad
		\lambda(\rho) = \lambda_{+}(\rho).  
		$$
By using \eqref{lambda+} and \eqref{lambda+-}, we easily see that 	 
$\upsilon$ and $\lambda$ above	satisfy  the assumption 
of Corollary \ref{Kweightlemma} with $\alpha =  s_1 - 2 \sigma$, $\beta = 2(1 -\sigma)$.   
The assumption $n \ge 2$ and $0 < 2 \sigma < 1$ implies $\alpha > -\frac{n}{2}$ and  $\beta > 0$, 
that is, $\alpha$ and $\beta$ satisfy the assumption of Corollary \ref{Kweightlemma}.   
Definition of $\alpha$ and \eqref{qass2} means \eqref{qass} (here we note that we assume $s_2 = 0$).  
Hence, applying  
Corollary \ref{Kweightlemma}, we obtain \eqref{K1lowest+}.     	
		
$
K(t,x) = \F^{-1}[|\xi|^{s_1} \hat K_1^-(t,\cdot) \chi_{low}]
$		
$(K_1^-$ is defined by \eqref{K1+-def}) has the form \eqref{Kdef} with 	
		$$
		\upsilon(\rho) = \frac{-\rho^{s_1}}{\lambda_+(\rho) - \lambda_-(\rho)} \chi_{low}, \quad
		\lambda(\rho) = \lambda_{-}(\rho). 
		$$
	By using \eqref{lambda+}, we easily see that $\upsilon$ and $\lambda$ above	satisfy the assumption of Corollary \ref{Kweightlemma} for $\alpha =s_1 - 2 \sigma (> -\frac{n}{2}), \beta = 2 \sigma (> 0 )$  and therefore,  
		\eqref{K1lowest-} holds in the same way as in the proof of \eqref{K1lowest+}. 
		 
		Since $\sigma < 1- \sigma$  by the assumption that $\sigma \in (0,1/2)$, 
the estimate \eqref{K1lowweight} follows from \eqref{K1lowest+} and \eqref{K1lowest-}.  
		
$
K(t,x) = \F^{-1}[|\xi|^{s_1} \hat K_0^+(t,\cdot) \chi_{low}]  	
$	
$(K_0^+$ is defined by \eqref{K0+-def}) 
 has the form \eqref{Kdef} with 	
		$$
		\upsilon(\rho) = \frac{-\rho^{s_1} \lambda_-(\rho)}{\lambda_+(\rho) - \lambda_-(\rho)} \chi_{low}, \quad
		\lambda(\rho) =  \lambda_{+}(\rho),
		$$
	By using \eqref{lambda-}, we easily see that $\upsilon$ and $\lambda$ satisfy  the assumption of Corollary \ref{Kweightlemma} with  $\alpha = s_1, \beta = 2(1 -\sigma)$.   
	Definition of $\alpha$ and \eqref{qass2} means the condition \eqref{qass} of $q_1$ for $q_1 = q_3$.  Since $q_4 < 2$, the condition of $q_2$ of \eqref{qass} holds for $q_2 = q_4$.  Hence,  
	we can apply Corollary \ref{Kweightlemma} to obtain \eqref{K0lowest+}. 
			
Kernel $K = \F^{-1}[|\xi|^{s_1} \hat K_0^-(t,\cdot) \chi_{low}]$ $(K_0^-$ is defined by \eqref{K0+-def}) has the form \eqref{Kdef} with 	
		$$
		\upsilon(\rho) = \frac{\rho^{s_1} \lambda_+(\rho)}{\lambda_+(\rho)   - \lambda_-(\rho)} \chi_{low}, \quad 
		\lambda(\rho) =  \lambda_{-}(\rho),
		$$
	By using \eqref{lambda+} -- \eqref{lambda+-}, we easily see that $\upsilon$ and $\lambda$ above	satisfy  the assumption of Corollary \ref{Kweightlemma} with  
		$\alpha =s_1 + 2(1- 2 \sigma) (> 0), \beta = 2 \sigma (> 0)$.   
		Definition of $\alpha$ and \eqref{qass2} means the condition of $q_1$ of \eqref{qass} for $q_1 = q_3$.  The condition of $q_2$ of \eqref{qass} holds for $q_2 = q_4$ in the same reason as above.  Hence, \eqref{K0lowest-} holds by  \eqref{Kgamma0}.  
		 		
Since $\sigma \in (0,1/2)$, inequality \eqref{low0weight} follows from  \eqref{K0lowest+} and \eqref{K0lowest-}. 
	\end{proof}

	\begin{lemma}\label{Kweightdiff}
			Let 	$\vartheta \ge 0$ and $0 \le s_2 \le s_1 $ 
				 such that 
		$\vartheta - s_1 + s_2 < \frac{n}{2} - 2 \sigma $.  		
		Assume that $q_j \in [1,2)$  $(j = 1,2,3,4)$ satisfy 
		\begin{equation}\label{qass2diss}
		\frac{1}{q_1} \ge \frac{1}{2} + \frac{2\sigma + \vartheta - s_1 + s_2 }{n}, \;
		\frac{1}{q_2} \ge \frac{1}{2} + \frac{2 \sigma - s_1 + s_2}{n}, \;
		\frac{1}{q_3} \ge \frac{1}{2} + \frac{ \vartheta - s_1 + s_2 }{n}.  \;
			\end{equation}		
		Then the following hold provided the right-hand sides are finite:
		\begin{align}
&\Norm{|\cdot|^{\vartheta} (-\Delta)^{\frac{s_1}{2}}   
			\left(\F^{-1}[(\hat K_1^+(t,\cdot) 
			- |\xi|^{-2 \sigma} e^{- |\xi|^{2(1 - \sigma)} t} )\chi_{low}]  * \varphi \right)}{{2}}
	\nonumber	\\
		&\lesssim 
		\langle t \rangle ^{{ \frac{1}{1 - \sigma} \left(-\frac{n}{2}(\frac{1}{q_1} - \frac{1}{2}) + \frac{{\vartheta - s_1 + s_2} }{2} + 3\sigma - 1 \right) }} 
		\| (-\Delta)^{\frac{s_2}{2}}\varphi \|_{{{q_1}}}^\prime
	\nonumber \\
 & \qquad		+	\langle t \rangle ^{{ \frac{1}{1 - \sigma} \left(-\frac{n}{2}(\frac{1}{{q_2}} - \frac{1}{2}) +  3\sigma - 1\right) }} 
	\||\cdot|^{\vartheta} (-\Delta)^{\frac{s_2}{2}}\varphi \|_{q_2}^\prime,	
		\label{K1lowest+diff} 
			\\
			&\Norm{|\cdot|^{\vartheta} (-\Delta)^{\frac{s_1}{2}}   
			\left(\F^{-1}[(\hat K_0^+(t,\xi) 
			-  e^{- |\xi|^{2(1 - \sigma)} t} )\chi_{low}]  * \varphi \right)}{{2}}
		\nonumber	\\
		&\lesssim 
		\langle t \rangle ^{{ \frac{1}{1 - \sigma} \left(-\frac{n}{2}(\frac{1}{q_1} - \frac{1}{2}) + \frac{{\vartheta - s_1 + s_2} }{2} + 2\sigma - 1 \right) }} \|(-\Delta)^{\frac{s_2}{2}}\varphi\|_{q_1}^\prime
	\nonumber \\
& \qquad			+
		\langle t \rangle ^{{ \frac{1}{1 - \sigma} \left(-\frac{n}{2}(\frac{1}{{q_2}} - \frac{1}{2}) +  2\sigma - 1 \right) }} 
		\||\cdot|^{\vartheta} (-\Delta)^{\frac{s_2}{2}}\varphi\|_{q_2}^\prime,
	\label{K0lowest+diff} 
				\end{align}	
where $\| \cdot \|^\prime_q$ is defined by \eqref{normprime}. 	
	\end{lemma}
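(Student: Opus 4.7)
The plan is to follow closely the strategy used in the proof of Lemma~\ref{Kweight}. First, I reduce to the case $s_2 = 0$ by the identity $(-\Delta)^{s_1/2}(\F^{-1}[\hat K \chi_{low}] \ast \varphi) = (-\Delta)^{(s_1-s_2)/2}\F^{-1}[\hat K \chi_{low}] \ast (-\Delta)^{s_2/2}\varphi$, replacing $s_1$ by $s_1 - s_2$ and $\varphi$ by $(-\Delta)^{s_2/2}\varphi$. The main idea is then to split each kernel difference into a ``symbol-defect'' piece that still carries the full exponential $e^{\lambda_+ t}$ (handled by Corollary~\ref{Kweightlemma}) and a ``phase-defect'' piece of the form $(1 - e^{\mu t})$ multiplying the parabolic exponential (handled by Corollary~\ref{Kweightlemmadiff}).

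For \eqref{K1lowest+diff} I would write
\[
\hat K_1^+(t,\xi) - |\xi|^{-2\sigma} e^{-|\xi|^{2(1-\sigma)}t} = \Bigl(\tfrac{1}{\lambda_+ - \lambda_-} - |\xi|^{-2\sigma}\Bigr) e^{\lambda_+ t} - |\xi|^{-2\sigma} e^{-|\xi|^{2(1-\sigma)}t}\bigl(1 - e^{\mu(|\xi|) t}\bigr),
\]
where $\mu(\rho) := \lambda_+(\rho) + \rho^{2(1-\sigma)}$. Since $\lambda_+ - \lambda_- = \rho^{2\sigma}\sqrt{1-4\rho^{2(1-2\sigma)}}$ by \eqref{lambda+-}, the first bracket equals $\rho^{-2\sigma}[(1-4\rho^{2(1-2\sigma)})^{-1/2} - 1]$, which is a smooth function of $\rho^{2(1-2\sigma)}$ vanishing like $\rho^{2-6\sigma}$. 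Together with $\lambda_+(\rho) \sim -\rho^{2(1-\sigma)}$ from \eqref{lambda+}, this allows us to apply Corollary~\ref{Kweightlemma} with $\alpha = s_1 + 2 - 6\sigma$ and $\beta = 2(1-\sigma)$, giving the decay exponent $\tfrac{1}{1-\sigma}\bigl(-\tfrac{n}{2}(\tfrac{1}{q_1}-\tfrac{1}{2}) + \tfrac{\vartheta - s_1}{2} + 3\sigma - 1\bigr)$ claimed in \eqref{K1lowest+diff} (after restoring $s_2$). For the phase-defect piece, the expansion $\lambda_+(\rho) = -\rho^{2(1-\sigma)} \cdot \tfrac{2}{1 + \sqrt{1 - 4\rho^{2(1-2\sigma)}}}$ yields $\mu(\rho) \sim -\rho^{4-6\sigma}$, and the hypothesis $\sigma < 1/2$ ensures $\gamma := 4 - 6\sigma > 0$; Corollary~\ref{Kweightlemmadiff} then applies with $\alpha = s_1 - 2\sigma$, $\beta = 2(1-\sigma)$, $\gamma = 4-6\sigma$, and the short computation $-\alpha + \beta - \gamma = -s_1 - 2 + 6\sigma$ shows this piece produces the same exponent.

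The estimate \eqref{K0lowest+diff} is handled in the same manner via
\[
\hat K_0^+(t,\xi) - e^{-|\xi|^{2(1-\sigma)}t} = \Bigl(\tfrac{-\lambda_-}{\lambda_+ - \lambda_-} - 1\Bigr) e^{\lambda_+ t} - e^{-|\xi|^{2(1-\sigma)}t}\bigl(1 - e^{\mu(|\xi|)t}\bigr).
\]
The algebraic identity $\tfrac{-\lambda_-}{\lambda_+ - \lambda_-} - 1 = \tfrac{-\lambda_+}{\lambda_+ - \lambda_-}$ combined with \eqref{lambda+}--\eqref{lambda+-} shows the first bracket is of order $\rho^{2-4\sigma}$ together with all its derivatives, so Corollary~\ref{Kweightlemma} applies with $\alpha = s_1 + 2 - 4\sigma$, $\beta = 2(1-\sigma)$; the phase-defect piece is handled by Corollary~\ref{Kweightlemmadiff} with $\alpha = s_1$, $\beta = 2(1-\sigma)$, $\gamma = 4 - 6\sigma$. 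Both contributions produce the target exponent $\tfrac{1}{1-\sigma}\bigl(-\tfrac{n}{2}(\tfrac{1}{q_1}-\tfrac{1}{2}) + \tfrac{\vartheta - s_1}{2} + 2\sigma - 1\bigr)$.

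The principal technical obstacle lies not in invoking the corollaries but in the bookkeeping: I must verify the pointwise derivative bounds $|\upsilon^{(j)}(\rho)| \lesssim \rho^{\alpha - j}$ and $|\mu^{(j)}(\rho)| \lesssim \rho^{\gamma - j}$ required by the corollaries, together with the admissibility conditions $\alpha > -n/2$ and $\alpha - \beta + \gamma > -n/2$ and the $q_j$-conditions \eqref{qass}, \eqref{qassdiff}. The derivative bounds follow by repeated Leibniz-rule differentiation of the analytic factors $\rho^{-2\sigma}\bigl[(1 - 4\rho^{2(1-2\sigma)})^{-1/2} - 1\bigr]$ and $\rho^{2(1-\sigma)}\bigl[1 - \tfrac{2}{1 + \sqrt{1 - 4\rho^{2(1-2\sigma)}}}\bigr]$, each smooth on $\supp\chi_{low}$. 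All admissibility and $q_j$ conditions reduce to the elementary inequalities $6\sigma < 3$ and $4\sigma < 2$ (that is, $\sigma < 1/2$) together with $n \ge 2$ and \eqref{qass2diss}, which is precisely the hypothesis of the lemma.
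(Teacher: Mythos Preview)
Your proposal is correct and follows essentially the same approach as the paper. The only difference is cosmetic: where the paper splits $\hat K_1^+ - |\xi|^{-2\sigma}e^{-|\xi|^{2(1-\sigma)}t}$ so that the symbol-defect factor $(\lambda_+-\lambda_-)^{-1} - |\xi|^{-2\sigma}$ is paired with the \emph{pure parabolic} exponential $e^{-|\xi|^{2(1-\sigma)}t}$ and the phase-defect factor $1-e^{\mu t}$ is paired with $(\lambda_+-\lambda_-)^{-1}$, you pair the symbol-defect with $e^{\lambda_+ t}$ and the phase-defect with $|\xi|^{-2\sigma}$; since both prefactors are $\sim \rho^{-2\sigma}$ and both exponentials satisfy $-\lambda(\rho)\sim\rho^{2(1-\sigma)}$, Corollaries~\ref{Kweightlemma} and~\ref{Kweightlemmadiff} apply with identical parameters $(\alpha,\beta,\gamma)$ and yield the same decay exponents.
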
		

\begin{proof}
We first prove \eqref{K1lowest+diff}.  
	Let $\lambda_\pm$ be the functions defined by \eqref{lambda_def2}.  
By the same reason as in the proof of Lemma \ref{Kweight}, we may assume that $s_2 = 0$.  
It follows from the definition that 		
\begin{align}
	&|\xi|^{s_1} \hat K_1^+(t,\xi) 
	- |\xi|^{s_1 -2 \sigma} e^{- |\xi|^{2(1 - \sigma)} t} 
	\nonumber \\
	&	= |\xi|^{s_1 -2 \sigma}
	\Bigg(	\frac{1}{\sqrt{1-4|\xi|^{2(1 - 2\sigma)}}}
	\exp \Big(\frac{- 2 	|\xi|^{2(1 - \sigma)}t}{1 + \sqrt{1-4|\xi|^{2(1 - 2\sigma)}}	}
							\Big)
	- e^{- |\xi|^{2(1 - \sigma)} t} 
			\Bigg)		
\nonumber			\\
			&\quad + |\xi|^{s_1 -2 \sigma}
\left( \frac{1}{\sqrt{1-4|\xi|^{2(1 - 2\sigma)}}} - 1 \right)
- e^{- |\xi|^{2(1 - \sigma)} t} 
\nonumber		 \\
		&	= \frac{|\xi|^{s_1 -2 \sigma} e^{- |\xi|^{2(1 - \sigma)} t} }{\sqrt{1-4|\xi|^{2(1 - 2\sigma)}}}
		\left(
				\exp \Big(\frac{ - 4|\xi|^{2(2 - 3 \sigma) } t}{(1+\sqrt{1-4|\xi|^{2(1 - 2\sigma)}})^2	}  \Big) -1
		\right)
\nonumber		\\
		&\quad -
	\frac{4 |\xi|^{s_1 + 2(1 - 2 \sigma) } e^{- |\xi|^{2(1 - \sigma)} t}  }
	{(1 + \sqrt{1-4|\xi|^{2(1 - 2\sigma)}})\sqrt{1-4|\xi|^{2(1 - 2\sigma)}}}. 
\nonumber
	\\
	&:= M_{1,1} + M_{1,2} \quad \text{(we put)}.  
		\label{weight_diss}	\end{align}

We easily see that 
		$$
		\upsilon(\rho) =  
		\frac{- \rho^{s_1 -2 \sigma}}{\sqrt{1-4\rho^{2(1 - 2\sigma)}}},
		\quad
		\lambda(\rho) = -\rho^{2(1 - \sigma)},
		\quad
		\mu(\rho) = \frac{4\rho^{2(2 - 3 \sigma) }}
		{(1+\sqrt{1-4\rho^{2(1 - 2\sigma)}})^2	}
		$$
		satisfy  the assumption 
		\eqref{fassump} 
			-- \eqref{muassump} 
				of Corollary \ref{Kweightlemmadiff} with $\alpha =  s_1 - 2 \sigma, \beta = 2(1 -\sigma), 
		\gamma = 2(2 - 3 \sigma)$.  
			Then the assumption $n \ge 2$ and $2 \sigma < 1$ implies 
		$\alpha - \beta + \gamma = s_1 + 2(1 - 3\sigma) > -\frac{n}{2}$ and $\beta > 0$, that is, $\alpha$, $\beta$ and $\gamma$ satisfy the assumption of Corollary \ref{Kweightlemmadiff}.  		 
		The assumption \eqref{qass2diss} and the definition of $\alpha, \beta, \gamma$ above imply
\begin{align*}
&\frac{1}{q_1} \ge \frac{1}{2} + \frac{2\sigma + \vartheta - s_1 }{n}
\ge \frac{1}{2} + \frac{ \vartheta - \alpha + \beta - \gamma }{n}, \;
\\
&\frac{1}{q_2} \ge \frac{1}{2} + \frac{2 \sigma - s_1 + s_2}{n} 
\ge \frac{1}{2} + \frac{ - \alpha + \beta - \gamma }{n}, \;
\end{align*}	
that is,  \eqref{qassdiff} holds.  Hence, we can apply Corollary \ref{Kweightlemmadiff} for the above choice to obtain
\begin{equation}\label{M11}
\begin{aligned}
|M_{1,1}| \lesssim 
& \langle t \rangle ^{{ \frac{1}{1 - \sigma} \left(-\frac{n}{2}(\frac{1}{q_1} - \frac{1}{2}) + \frac{\vartheta - s_1  }{2} + 2\sigma - 1  \right) }} 
\| (-\Delta)^{\frac{s_2}{2}}\varphi \|_{{q_1}}^\prime
\\
& \qquad			+
\langle t \rangle ^{{ \frac{1}{1 - \sigma} \left(-\frac{n}{2}(\frac{1}{{q_2}} - \frac{1}{2}) - \frac{s}{2} +  2\sigma - 1  \right) }} 
\||\cdot|^{\vartheta} (-\Delta)^{\frac{s_2}{2}}\varphi\|_{q_2}^\prime. 	
\end{aligned}\end{equation}

We also see that 
$$
\upsilon(\rho) =  
\frac{4 \rho^{s_1 + 2(1 - 3 \sigma) }}{(1+\sqrt{1-4\rho^{2(1 - 2\sigma)}})\sqrt{1-4\rho^{2(1 - 2\sigma)}}},
\quad
\lambda(\rho) = -\rho^{2(1 - \sigma)}
$$
satisfy  the assumption 
of Corollary \ref{Kweightlemma} with    
$\alpha =  s_1 + 2(1 - 3 \sigma) (>- \frac{n}{2})$ and $\beta = 2(1 -\sigma) (> 0)$.   
The assumption \eqref{qass} is satisfied by \eqref{qass2diss} together with the definition of $\alpha$.  
Hence, we can apply Corollary
\ref{Kweightlemma} 
to obtain
\begin{equation}\label{M12}
\begin{aligned}
|M_{1,2}| \lesssim 
& \langle t \rangle ^{{ \frac{1}{1 - \sigma} \left(-\frac{n}{2}(\frac{1}{q_1} - \frac{1}{2}) + \frac{\vartheta - s_1  }{2} + 3\sigma - 1  \right) }} 
\| (-\Delta)^{\frac{s_2}{2}}\varphi \|_{{{q_1}}}^\prime
\\
& \qquad			+
\langle t \rangle ^{{ \frac{1}{1 - \sigma} \left(-\frac{n}{2}(\frac{1}{{q_2}} - \frac{1}{2}) - \frac{s_1}{2} +  3\sigma - 1  \right) }} 
\||\cdot|^{\vartheta} (-\Delta)^{\frac{s_2}{2}}\varphi\|_{q_2}^\prime. 	
\end{aligned}\end{equation}
Inequality  \eqref{K1lowest+diff} follows from \eqref{weight_diss}, \eqref{M11}  and \eqref{M12}. 

Next we prove \eqref{K0lowest+diff}.  
It follows from \eqref{lambda_def2}, \eqref{lambda+} and \eqref{lambda+-} that  		
\begin{align}
&|\xi|^{s_1} \hat K_0^+(t,\xi) 
- |\xi|^{s_1} e^{- |\xi|^{2(1 - \sigma)} t} 
\nonumber \\
&	= 
|\xi|^{s_1} \left( \frac{- \lambda_-(|\xi|)}{\lambda_+(|\xi|)  - \lambda_-(|\xi|) }
- 1 \right)
e^{\lambda_+ (|\xi|) t}
+ 
|\xi|^{s_1}	\left(e^{\lambda_+ (|\xi|)t }-  e^{-|\xi|^{-2 (1- \sigma)}t} \right) 
\nonumber \\
&	= -
\frac{2 |\xi|^{s_1 + 2(1 - 2 \sigma) }}{(1 + \sqrt{1-4|\xi|^{2(1 - 2\sigma)}})\sqrt{1-4|\xi|^{2(1 - 2\sigma)}}}
\exp(\lambda_+(|\xi|) t)	 
\nonumber \\
& \quad	+ |\xi|^{s_1 }
\exp(-|\xi|^{2(1 - \sigma)}t)
\left(
\exp \Big(\frac{- 4|\xi|^{2(2 - 3 \sigma) }t}{(1+\sqrt{1-4|\xi|^{2(1 - 2\sigma)}})^2	}  \Big) -1
\right)
\\
&=: M_{2,1} + M_{2,2} \quad (\text{we put}). 
\label{weight3_diss}
\end{align}
We easily see that 
\begin{align*}
\upsilon(\rho) &=  
\frac{2 \rho^{s_1 + 2(1 -2 \sigma) }}{(1+\sqrt{1-4\rho^{2(1 - 2\sigma)}})\sqrt{1-4\rho^{2(1 - 2\sigma)}}},
\quad
\\
\lambda(\rho) &=\lambda_+(\rho) 
= \frac{- 2  \rho^{2 - 2 \sigma}t}{1 +\sqrt{1-4\rho^{2(1 - 2\sigma)} } }
\end{align*}
satisfy  the assumption 
of Corollary \ref{Kweightlemma} with  
$\alpha =  s_1 + 2(1 - 2 \sigma) , \beta = 2(1 -\sigma)$.   
		The assumption \eqref{qass2diss} and the definition of $\alpha$ above yield
\begin{align*}
&\frac{1}{q_3} \ge \frac{1}{2} + \frac{ \vartheta - s_1 }{n}
\ge \frac{1}{2} + \frac{ \vartheta - \alpha }{n}, 
\\
&\frac{1}{q_4} > \frac{1}{2} 
\ge \frac{1}{2} + \frac{- \alpha }{n}, 
\end{align*}	
that is,  \eqref{qass} holds for $q_1 = q_3$ and $q_2 = q_4$.  
Hence, we can apply Corollary
\ref{Kweightlemma} 
to obtain 
\begin{align}\label{M21}
|M_{2,1}| 
		&\lesssim 
\langle t \rangle ^{{ \frac{1}{1 - \sigma} \left(-\frac{n}{2}(\frac{1}{q_3} - \frac{1}{2}) + \frac{{\vartheta - s_1 } }{2} + 2\sigma - 1  \right) }} \|\varphi\|_{q_3}^\prime
\nonumber \\
& \qquad			+
\langle t \rangle ^{{ \frac{1}{1 - \sigma} \left(-\frac{n}{2}(\frac{1}{{q_4}} - \frac{1}{2}) - \frac{s_1}{2}+ 2\sigma - 1 \right) }} 
\||\cdot|^{\vartheta} (-\Delta)^{\frac{s_2}{2}}\varphi\|_{q_4}^\prime. 	
\end{align}

We also see that 
$$
\upsilon(\rho) = \rho^{s_1}, 
\quad
\lambda(\rho) = -\rho^{2(1 - \sigma)},
\quad
\mu(\rho) = \frac{4\rho^{2(2 - 3 \sigma) }}
{(1+\sqrt{1-4\rho^{2(1 - 2\sigma)}})^2	}
$$
satisfy  the assumption 
of Corollary \ref{Kweightlemmadiff} with  
$\alpha =  s_1, \beta = 2(1 -\sigma), \gamma = 2(2 - 3 \sigma)$.  
		The assumption \eqref{qass2diss} and the definition of $\alpha, \beta, \gamma$ above yield
\begin{align*}
&\frac{1}{q_3} \ge \frac{1}{2} + \frac{\vartheta - s_1 }{n}
>  
\frac{1}{2}  + \frac{\vartheta - s_1-2(1 - 2\sigma) }{n}
= \frac{1}{2} + \frac{ \vartheta - \alpha + \beta - \gamma }{n}, \;
\\
&\frac{1}{q_4} > \frac{1}{2} 
\ge \frac{1}{2}  + \frac{- s_1-2(1 - 2\sigma) }{n}
= \frac{1}{2} + \frac{ - \alpha + \beta - \gamma }{n}, \;
\end{align*}	
that is,  \eqref{qassdiff} holds for $q_1 = q_3, q_2 = q_4$.   
Hence, we can apply Corollary \ref{Kweightlemmadiff} 
to obtain													
\begin{align}\label{M22}
|M_{2,2}| 
		&\lesssim 
\langle t \rangle ^{{ \frac{1}{1 - \sigma} \left(-\frac{n}{2}(\frac{1}{q_3} - \frac{1}{2}) + \frac{{\vartheta - s_1 } }{2} + 2\sigma - 1  \right) }} \|\varphi\|_{q_3}^\prime
\nonumber \\
& \qquad			+
\langle t \rangle ^{{ \frac{1}{1 - \sigma} \left(-\frac{n}{2}(\frac{1}{{q_4}} - \frac{1}{2}) - \frac{s_1}{2}+ 2\sigma - 1 \right) }} 
\||\cdot|^{\vartheta} (-\Delta)^{\frac{s_2}{2}}\varphi\|_{q_4}^\prime. 	
\end{align}
Inequality \eqref{K0lowest+diff} follows from \eqref{weight3_diss}, \eqref{M21} and \eqref{M22}. 
 
\end{proof}

			\subsection{Estimate of the kernels for high frequency part$(|\xi| \geq 1)$}
						
			In this subsection, we consider high frequency region: 
			$|\xi| \geq 1$. 
					
		\begin{lemma}	\label{high}
	For every $s, \delta \ge 0$, the following hold.  
						\begin{align}
&			\Norm{(-\Delta)^{\frac{s }{2}}  \left(K_{1,high}(t, \cdot) * \varphi \right) }{{2}}
			\lesssim
			e^{-\frac{t}{2}} \| 
			(-\Delta)^{\frac{s }{2}} (1 - \Delta)^{-\frac{1}{2}}  \varphi \|_{{2}},	
			\label{high_der}
	\\
	&				\Norm{\langle \cdot \rangle^\delta    \left(K_{1,high}(t, \cdot)  * \varphi \right) }{{2}}
			\lesssim
			e^{-\frac{t}{2}} \|   (1 - \Delta)^{-\frac{1}{2}} \langle \cdot \rangle^\delta \varphi\|_{{2}},
					\label{high_weight}
					\\
		&			\Norm{(-\Delta)^{\frac{s }{2}}  \left(K_{0,high}(t, \cdot) * \varphi \right) }{{2}}
		\lesssim
		e^{-\frac{t}{2}} \| (-\Delta)^{\frac{s }{2}} \varphi \|_{{2}},	
		\label{high0_der}
		\\
		&				\Norm{\langle \cdot \rangle^\delta    \left(K_{0,high}(t, \cdot)  * \varphi \right) }{{2}}
		\lesssim
		e^{-\frac{t}{2}} \|\langle \cdot \rangle^\delta  \varphi\|_{{2}},	
		\label{high0_weight}			
			\end{align}
			provided the right-hand sides are finite.  
		\end{lemma}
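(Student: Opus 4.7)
The plan is to work on the Fourier side. On $\supp \chi_{high}$ we have $|\xi| \ge 1$, hence $|\xi|^{1-2\sigma} > 1/2$, and formula \eqref{lambda_def2} gives complex-conjugate eigenvalues with $\mathrm{Re}\,\lambda_\pm(|\xi|) = -\tfrac12|\xi|^{2\sigma}$ and $|\lambda_+-\lambda_-| = |\xi|^{2\sigma}\sqrt{4|\xi|^{2(1-2\sigma)}-1}$, which is comparable to $|\xi|$ uniformly on the support and also comparable to $(1+|\xi|^2)^{1/2}$ there. Since $|\xi|^{2\sigma}\ge 1$ on $\supp \chi_{high}$, we have $|e^{\lambda_\pm(|\xi|)t}| \le e^{-|\xi|^{2\sigma}t/2} \le e^{-t/2}$, and \eqref{K0def}--\eqref{K1def} immediately yield the pointwise symbol bounds
\[ |\widehat{K_{0,high}}(t,\xi)| \lesssim e^{-|\xi|^{2\sigma}t/2}\chi_{high}(\xi), \qquad |(1+|\xi|^2)^{1/2}\widehat{K_{1,high}}(t,\xi)| \lesssim e^{-|\xi|^{2\sigma}t/2}\chi_{high}(\xi), \]
both dominated by $Ce^{-t/2}$.

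The unweighted estimates \eqref{high_der} and \eqref{high0_der} then follow directly from Plancherel: for $K_0$,
\[ \|(-\Delta)^{s/2}(K_{0,high}(t,\cdot)*\varphi)\|_2 = \||\xi|^s \widehat{K_{0,high}}\,\hat\varphi\|_2 \le \|\widehat{K_{0,high}}(t,\cdot)\|_\infty\|(-\Delta)^{s/2}\varphi\|_2, \]
while for $K_1$ I factor $(1+|\xi|^2)^{1/2}$ out of the symbol so that the uniformly bounded part multiplies $|\xi|^s(1+|\xi|^2)^{-1/2}\hat\varphi$, producing the $(1-\Delta)^{-1/2}$ weight on $\varphi$. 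For the weighted estimates \eqref{high_weight} and \eqref{high0_weight}, my strategy is Peetre's inequality $\langle x\rangle^\delta \lesssim \langle x-y\rangle^\delta\langle y\rangle^\delta$ followed by Young's inequality $L^1 * L^2 \to L^2$, giving
\[ \|\langle\cdot\rangle^\delta (K(t,\cdot)*\varphi)\|_2 \lesssim \|\langle\cdot\rangle^\delta K(t,\cdot)\|_1 \,\|\langle\cdot\rangle^\delta\varphi\|_2. \]
For \eqref{high0_weight} this reduces matters to showing $\|\langle\cdot\rangle^\delta K_{0,high}(t,\cdot)\|_1 \lesssim e^{-t/2}$; by Cauchy--Schwarz $\|\langle\cdot\rangle^\delta f\|_1 \le \|\langle\cdot\rangle^{-N}\|_2\|\langle\cdot\rangle^{N+\delta}f\|_2$ for $N > n/2$, together with Parseval, this further reduces to bounding $\|\partial_\xi^\alpha\widehat{K_{0,high}}(t,\cdot)\|_2 \lesssim e^{-t/2}$ for $|\alpha|$ up to an integer exceeding $\delta + n/2$, with non-integer $\delta$ handled by interpolation. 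Each $\xi$-derivative of $\widehat{K_0}$ either hits $\chi_{high}$ (bounded with compact support in an annulus), hits $\lambda_\pm$ (yielding a factor $|\xi|^{-1}$, harmless on $|\xi|\ge 1$), or hits an exponential (yielding a factor of $t$, absorbed into $e^{-|\xi|^{2\sigma}t/2}$ via $|\xi|^{2\sigma}\ge 1$), so the required bound holds. For \eqref{high_weight} I first rewrite $K_{1,high}(t,\cdot)*\varphi = \tilde K_1(t,\cdot)*[(1-\Delta)^{-1/2}\varphi]$ with $\tilde K_1 := (1-\Delta)^{1/2}K_{1,high}$ enjoying the same bounded symbol, and apply the Peetre--Young argument to $\tilde K_1$; converting $\|\langle\cdot\rangle^\delta(1-\Delta)^{-1/2}\varphi\|_2$ back to $\|(1-\Delta)^{-1/2}\langle\cdot\rangle^\delta\varphi\|_2$ uses the $L^2$-bounded commutator $[\langle\cdot\rangle^\delta,(1-\Delta)^{-1/2}]$, whose symbol has order $-2$ relative to the leading terms.

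The main technical obstacle is the weighted $L^1$ bound $\|\langle\cdot\rangle^\delta K_{0,high}(t,\cdot)\|_1 \lesssim e^{-t/2}$ and its analogue for $\tilde K_1$: the explicit book-keeping of the $t^k$ and $|\xi|^{-k}$ factors produced by repeated $\xi$-differentiation of the quotient formulas for $\widehat{K_j}$, together with the interpolation step that extends the integer-weight estimates to general $\delta\ge 0$, is where the bulk of the technical work lies. The commutator conversion in \eqref{high_weight} is of lower order and is absorbed by the extracted $e^{-t/2}$ factor, so it does not add genuinely new difficulty.
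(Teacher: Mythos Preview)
Your treatment of the unweighted estimates \eqref{high_der} and \eqref{high0_der} is correct and coincides with the paper's argument. The gap is in the weighted estimates \eqref{high_weight} and \eqref{high0_weight}: the bound $\|\langle\cdot\rangle^\delta K_{0,high}(t,\cdot)\|_{1}\lesssim e^{-t/2}$ (and its analogue for $\tilde K_1$) is \emph{false} for small $t$, so the Peetre--Young strategy cannot give a constant uniform in $t$, which is what the lemma requires (it is later integrated over $\tau\in[0,t]$). Concretely, your Cauchy--Schwarz/Parseval reduction needs in particular the $\alpha=0$ term $\|\widehat{K_{0,high}}(t,\cdot)\|_{2}\lesssim e^{-t/2}$. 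But on the region $2\le|\xi|\le t^{-1/(2\sigma)}$ one has $e^{-|\xi|^{2\sigma}t/2}\ge e^{-1/2}$, and since $\widehat{K_0}(t,\xi)=e^{-|\xi|^{2\sigma}t/2}\bigl(\cos(bt)+\tfrac{|\xi|^{2\sigma}}{2b}\sin(bt)\bigr)$ with $b\sim|\xi|$, the modulus $|\widehat{K_{0,high}}(t,\xi)|$ is bounded below by a fixed positive constant on a subset of this region of positive density. That subset has measure $\sim t^{-n/(2\sigma)}$, hence $\|\widehat{K_{0,high}}(t,\cdot)\|_{2}\to\infty$ as $t\to 0^+$. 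The same blow-up occurs for $\widehat{\tilde K_1}$, so neither $K_{0,high}(t,\cdot)$ nor $\tilde K_1(t,\cdot)$ lies in $L^1$ uniformly near $t=0$.

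The paper circumvents this by handling the weights on the Fourier side as well, never asking for $L^1$ (or $L^2_\xi$) control of the kernel. One writes
\[
\||\cdot|^{\vartheta}(K_{1,high}*\varphi)\|_{2}
=\bigl\|(-\Delta_\xi)^{\vartheta/2}\bigl((\widehat{K_{1,high}}(t,\xi)\langle\xi\rangle)\,(\langle\xi\rangle^{-1}\hat\varphi)\bigr)\bigr\|_{2}
\]
and applies a fractional Leibniz rule of the type $\|(1-\Delta_\xi)^{\vartheta/2}(ab)\|_{2}\lesssim \bigl(\sum_{j,k}\|\partial_{\xi_j}^k a\|_{\infty}\bigr)\|(1-\Delta_\xi)^{\vartheta/2}b\|_{2}$ with $a=\widehat{K_{1,high}}\langle\xi\rangle$ and $b=\langle\xi\rangle^{-1}\hat\varphi$. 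This requires only the $L^\infty$ bounds $\|\partial_\xi^\alpha(\widehat{K_{1,high}}(t,\xi)\langle\xi\rangle)\|_{\infty}\lesssim e^{-t/2}$, which \emph{do} hold uniformly in $t\ge 0$ (exactly the derivative bookkeeping you described), and directly yields $e^{-t/2}\|\langle\xi\rangle^{-1}(1-\Delta_\xi)^{\vartheta/2}\hat\varphi\|_{2}\sim e^{-t/2}\|(1-\Delta)^{-1/2}\langle\cdot\rangle^{\vartheta}\varphi\|_{2}$. The $K_0$ case is identical with $a=\widehat{K_{0,high}}$ and $b=\hat\varphi$. No commutator step is needed.
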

		
		\begin{proof}	
	We easily see that 
				\begin{align}\label{K1high1}
			\langle \xi \rangle |\widehat{K_1^{\pm}}(t,\xi) |
				&\lesssim  e^{-\frac{|\xi|^{2 \sigma}}{2}t}  \le e^{-\frac{t}{2}},
				\end{align}
			on the support of $ \chi_{high}$. 
						Hence, 			
			\begin{equation*}
			\begin{aligned}
			&		\Norm{(-\Delta)^{\frac{s }{2}} \left(K_{1,high}(t, \cdot) * \varphi \right)}{{2}}
			=\Norm{|\xi|^{s } \hat K_{1,high}(t,\xi)  \hat \varphi(\xi)  }{{2}}
			\\
			&\le \Norm{\hat K_{1,high}(t,\xi) \langle \xi \rangle} {{\infty}}
			\Norm{|\xi|^{s } \langle \xi \rangle ^{ - 1 } \hat \varphi(\xi) }{{2}}
			\lesssim e^{-\frac{t}{2}} \| (-\Delta)^{\frac{s }{2}} (1 - \Delta)^{-\frac{1}{2}}  \varphi \|_{{2}},	
			\end{aligned}
			\end{equation*}
			that is, \eqref{high_der} holds.  
			
			In the proof of \cite[p.  10]{IIW} (see also \cite[ p. 643]{Hy}), the following Leibniz rule is shown:
			\begin{equation}\label{Lr}
			\|(1 - \Delta)^{\frac{\vartheta  }{2} } (\varphi \psi) \|_{{2}} 
							\le
		\sum_{j = 1}^n \sum_{k = 1}^{[\vartheta]+1}\|	\partial_j^k \varphi  \|_{{\infty}}
		\|(1 - \Delta)^{\frac{\vartheta  }{2} } \psi \|_{{2}}. 
			\end{equation}
		Since 
			$		|\partial_j^k (\hat K_{1,high}|\xi|)(t,\xi) |
			\le C_{k} e^{-\frac{t}{2}}$ 
					for every nonnegative integer $k$, 
								we have
			\begin{equation*}
			\begin{aligned}
		&	\Norm{|\cdot|^\vartheta 
				     \left(K_{1,high} * \varphi \right) }{{2}}
			=\Norm{|(-\Delta)^{\frac{\vartheta  }{2}} 
				\left( (\hat K_{1,high}(t,\xi) \langle \xi \rangle ) 
			(	\langle \xi \rangle^{-1}
				\hat \varphi(\xi) )
				\right) }{{2}}
					\\			
& \qquad \qquad 
\lesssim e^{-\frac{t}{2}}
								\Norm{\langle \xi \rangle^{-1} (1 - \Delta)^{\vartheta   /2} \hat \varphi(\xi) }{{2}}
				\sim e^{-\frac{t}{2}} \| (1 - \Delta)^{-\frac{1}{2}}  
			\langle \cdot \rangle^\vartheta   \varphi\|_{{2}}.   
						\end{aligned}
			\end{equation*}
	Taking $\vartheta   = 0$ and $\delta$ in this inequality,  we obtain 
	\eqref{high_weight}.  

	We can prove  \eqref{high0_der} and \eqref{high0_weight} in the same way.  	
	\end{proof}
			
	\subsection{Estimate of the kernels for middle frequency part}		

	In this subsection, we consider the region:	
	$ |\xi| \in [2^{{-\frac{3}{1 - 2\sigma} }}, 2]$. 		
	
	\begin{lemma}	\label{middle}
		There is a constant $\vare_{\sigma} \in (0, \frac{1}{2})$ such that 
		the following hold for every $s \ge 0$, 
			$ \vartheta \ge 0$:
		\begin{align}
		&			\Norm{(-\Delta)^{\frac{s}{2}}  \left(K_{1,mid}(t, \cdot) * \varphi \right)}{{2}}
		\lesssim
		e^{-\vare_{\sigma} t} 
		\| (1 - \Delta)^{-\frac{1}{2}}  \varphi \|_{{2}},	
		\label{mid_der1}
		\\
		&				\Norm{\langle \cdot \rangle^\vartheta      \left(K_{1,mid}(t, \cdot)  * \varphi \right)}{{2}}
		\lesssim
		e^{-\vare_{\sigma} t}\| (1 - \Delta)^{-\frac{1}{2}}  
		\langle \cdot \rangle^\vartheta    \varphi \|_{{2}},
			\label{mid_weight1}
		\\
		&			\Norm{(-\Delta)^{\frac{s}{2}}  \left(K_{0,mid}(t, \cdot) * \varphi \right)}{{2}}
		\lesssim
		e^{-\vare_{\sigma} t} 
		\| \varphi \|_{{2}},	
		\label{mid0_der1}
		\\
		&		\Norm{
					\langle \cdot \rangle^\vartheta      \left(K_{0,mid}(t, \cdot) * \varphi \right)}{{2}}
		\lesssim
		e^{-\vare_{\sigma} t} \| \langle \cdot \rangle^\vartheta    
	 \varphi \|_{{2}},
		\label{mid0_weight1}			
		\end{align}
		provided the right-hand sides are finite. 		
		\end{lemma}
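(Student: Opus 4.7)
The plan is to follow the template of Lemma~\ref{high} line by line, replacing the exponential factor $e^{-|\xi|^{2\sigma}t/2}$ (which only decays for $|\xi|\ge 1$) by a uniform $e^{-\vare_\sigma t}$ valid on the bounded annulus $\supp\chi_{mid}\subset\{\xi:|\xi|\in[2^{-3/(1-2\sigma)},2]\}$. The first step is to fix $\vare_\sigma\in(0,1/2)$ such that $\operatorname{Re}\lambda_\pm(|\xi|)\le-2\vare_\sigma$ on $\supp\chi_{mid}$. From \eqref{lambda_def2} and \eqref{lambda+}--\eqref{lambda-}: when $|\xi|^{1-2\sigma}\ge 1/2$ both roots are complex conjugates with $\operatorname{Re}\lambda_\pm=-|\xi|^{2\sigma}/2$, while in the real region $|\xi|^{1-2\sigma}<1/2$ one has $\lambda_+\le-|\xi|^{2-2\sigma}$ and $\lambda_-\le-|\xi|^{2\sigma}/2$; since $|\xi|$ is bounded away from $0$ on $\supp\chi_{mid}$, both real parts are bounded above by a strictly negative constant.

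The main obstacle is that the formulas \eqref{K0def}--\eqref{K1def} are formally indeterminate at the coincidence locus $|\xi|^{1-2\sigma}=1/2$ inside the middle annulus. I would resolve this by observing that $\widehat K_1$ and $\widehat K_0$ are \emph{symmetric} rational expressions in $(\lambda_+,\lambda_-)$, hence smooth functions of the elementary symmetric combinations $\lambda_++\lambda_-=-|\xi|^{2\sigma}$ and $\lambda_+\lambda_-=|\xi|^2$, which are $C^\infty$ in $|\xi|$ on $\supp\chi_{mid}$. Equivalently, with $a=-|\xi|^{2\sigma}/2$ and $b^2=|\xi|^{4\sigma}/4-|\xi|^2$ one obtains $\widehat K_1=e^{at}\sinh(bt)/b$ and $\widehat K_0=e^{at}\bigl(\cosh(bt)-(a/b)\sinh(bt)\bigr)$, both even in $b$ and manifestly smooth in $b^2$ via the Taylor expansions of $\cosh$ and $\sinh(bt)/b$. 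Combined with the previous step and the compactness of the annulus, this yields, for every multi-index $\alpha$,
$$\bigl|\partial_\xi^{\alpha}\bigl(\widehat K_{1,mid}(t,\xi)\langle\xi\rangle\bigr)\bigr|+\bigl|\partial_\xi^{\alpha}\widehat K_{0,mid}(t,\xi)\bigr|\lesssim_{\alpha}(1+t)^{|\alpha|}e^{-2\vare_\sigma t}\lesssim_{\alpha}e^{-\vare_\sigma t},$$
the polynomial $t$-factors produced by differentiating $e^{\lambda_\pm t}$ being absorbed into the slightly weaker exponential.

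With these $L^\infty$ symbol bounds in hand, the four estimates follow exactly as in Lemma~\ref{high}. For \eqref{mid_der1} and \eqref{mid0_der1}, Plancherel together with the boundedness of $|\xi|^s$ on $\supp\chi_{mid}$ gives
$$\|(-\Delta)^{s/2}(K_{1,mid}(t,\cdot)*\varphi)\|_2\le\bigl\||\xi|^s\widehat K_{1,mid}(t,\xi)\langle\xi\rangle\bigr\|_\infty\bigl\|\langle\xi\rangle^{-1}\hat\varphi\bigr\|_2\lesssim e^{-\vare_\sigma t}\|(1-\Delta)^{-1/2}\varphi\|_2,$$
and analogously for $K_{0,mid}$, this time omitting the $\langle\xi\rangle$ factor since $\widehat K_{0,mid}$ is itself bounded by $e^{-\vare_\sigma t}$. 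For the weighted estimates \eqref{mid_weight1} and \eqref{mid0_weight1} I would apply the Leibniz-type inequality \eqref{Lr} to the product decomposition $\widehat K_{1,mid}\hat\varphi=(\widehat K_{1,mid}\langle\xi\rangle)(\langle\xi\rangle^{-1}\hat\varphi)$ (resp.\ $\widehat K_{0,mid}\hat\varphi$) on the Fourier side, exactly as in the weighted part of Lemma~\ref{high}; the derivative bounds from the previous step guarantee that every $\|\partial_j^k(\cdot)\|_\infty$ factor produced by \eqref{Lr} carries the desired $e^{-\vare_\sigma t}$ prefactor, and Plancherel converts the remaining factor into $\|(1-\Delta)^{-1/2}\langle\cdot\rangle^\vartheta\varphi\|_2$ or $\|\langle\cdot\rangle^\vartheta\varphi\|_2$ as required.
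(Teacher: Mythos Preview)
Your proof is correct and follows essentially the same template as the paper's: establish uniform exponential symbol bounds on the compact annulus $\supp\chi_{mid}$, then apply Plancherel for the derivative estimates and the Leibniz rule \eqref{Lr} for the weighted ones, exactly as in Lemma~\ref{high}. The only technical difference is in how the symbol bound is obtained: the paper writes $\widehat K_1(t,\xi)=t\,e^{(\theta\lambda_+ +(1-\theta)\lambda_-)t}$ via the mean value theorem (which sidesteps the coincidence locus automatically) and then bounds the real part of the exponent by $-2\vare_\sigma$ through the same case split you use, whereas you invoke the $\cosh/\sinh$ representation and evenness in $b$ to argue smoothness directly---both devices yield the same $L^\infty$ bounds on $\partial_\xi^\alpha\widehat K_{j,mid}$ and the remainder of the argument is identical.
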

	
	\begin{proof}
		By definitions \eqref{K1def}, \eqref{lambda_def1} and \eqref{lambda_def2}, we have
		\begin{equation}
		\begin{aligned}
		|	\widehat K_1(t,\xi) | & = 
	\left|	 \frac{e^{\lambda_+(|\xi|) t}- e^{\lambda_-(|\xi|) t}}{\lambda_+(|\xi|) - \lambda_-(|\xi|)} 
	\right|
		=  	t 	\left|e^{(\theta \lambda_+(|\xi|) + (1 - \theta)\lambda_-(|\xi|))t } \right|
		\\
		&	=  t \left| e^{\frac{t}{2} (- |\xi|^{2 \sigma}  + (2 \theta - 1)
				 |\xi|^{2 \sigma}	\sqrt{1-4|\xi|^{2(1 - 2\sigma)}}  )} 
			 \right|
			 		\end{aligned}				\end{equation}
		for some $\theta \in (0,1)$.  						
		Hence, 			
		\begin{equation}\label{K1mid1}
		|	\widehat K_1(t,\xi) | =	t e^{-\frac{1}{2} |\xi|^{2 \sigma}t} 
		\le	t e^{-2^{- \frac{1}{1 - 2\sigma}}t} 
		\lesssim 
			 e^{-2 \vare_{\sigma}t}
		\end{equation}
		in the case $2|\xi|^{1 - 2 \sigma} \ge 1$, 
		where 
		$\vare_{\sigma} = 2^{- \frac{6}{1 - 2 \sigma}- 1}$.  		 	
		Next, we consider the case $2|\xi|^{1 - 2 \sigma} \le 1$.  
		Then 
		\begin{align*}
		&	\frac{1}{2}	|\xi|^{2 \sigma} (-1 + (2 \theta - 1)
		\sqrt{1-4|\xi|^{2(1 - 2\sigma)}}  )		
		\le 
		\frac{1}{2}	 |\xi|^{2 \sigma}(-1 +  \sqrt{1-4|\xi|^{2(1 - 2\sigma)}} ) 
		\\ 
		& \quad = 
		\frac { - 2|\xi|^{2 - 2\sigma}}{ 1 +  \sqrt{1-4|\xi|^{2(1 - 2\sigma)}} }
		\le  - |\xi|^{2 - 2\sigma} \le -2^{- \frac{6(1-\sigma)}{1 - 2 \sigma}} 
	\le -  2 \vare_{\sigma},
			\end{align*}	
and thus,  
			\begin{equation*}
		|	\widehat K_1(t,\xi) | 
			\le	t e^{-2  \vare_{\sigma} t }	\lesssim	 e^{- \vare_{\sigma} t}
		\end{equation*}		
on $[2^{-\frac{3}{1-2\sigma}}, 2^{-\frac{1}{1-2\sigma}}]$, 	
which together with \eqref{K1mid1} yields 
			\begin{equation}\label{mid2}
\langle \xi \rangle |	\widehat K_1(t,\xi)\chi_{mid}(\xi) | \lesssim 
			e^{- \vare_{\sigma} t}. 
		\end{equation}		
Calculating in the same way as in the proof of \eqref{high_der} by using \eqref{mid2} instead of \eqref{K1high1}, and noting that $-\Delta$ is bounded operator on the support $\chi_{mid}$,  we obtain \eqref{mid_der1}.

	In the same way as in the proof of \eqref{mid2}, we see that
		\begin{equation}\label{mid3}
		\|  (1 - \Delta)^{\frac{k}{2}} \hat K_1(t,\cdot) \chi_{mid} \|_{{\infty}} 
		\le C_k e^{- \vare_{\sigma} t}, 
		\end{equation}
		for every $k \in \N \cup \{0\}$.  
		Then by the same calculation as in the proof of \eqref{high_weight},
	we obtain \eqref{mid_weight1}.  		
		
		We can estimate
		\begin{align*}
		\hat K_0
				&=\frac{ \lambda_+ e^{\lambda_- t}- \lambda_-  e^{\lambda_+ t}}{\lambda_+ - \lambda_-}
		= \frac{ e^{\lambda_+}  -   e^{\lambda_-} }				{\lambda_+ - \lambda_-}\lambda_+
		+ \lambda_-,
		\end{align*}
		in the same way, and obtain the assertion for $K_{0,mid}$.   
			\end{proof}	

	\section{Asymptotic profile of the solutions of Linear equation}
	
In this section, we prove Theorem \ref{thm_lin_diff}.    

%\begin{proof}[\bf{Proof of Theorem \ref{thm_lin_diff}}]
Since the solution $u$ of \eqref{LW}  is written as
	\begin{align*}
	u(t,x) 
	&=
	\left( K_0(t,\cdot)\ast u_0 \right)(x)
	+
	\left( K_1(t,\cdot)\ast u_1 \right)(x),	
		\end{align*}
the conclusion of Theorem \ref{thm_lin_diff} follows from the following lemma.

\begin{lemma}\label{lemma_lin_diff}
	Let $u_j \in L^1 \cap L^2$ for $j = 0,1$.  Then the following hold.  
\begin{align}	
&
	\left\| K_1(t,\cdot)\ast u_1 
- G_{\sigma}(t,x) \int_{\R^n}u_1 (y)dy  \right\|_2
\nonumber \\
&	\lesssim \langle t \rangle 
^{{\max \{
		\frac{1}{1 - \sigma}\left(-\frac{n}{4} +  3\sigma - 1 \right),
		\frac{1}{\sigma}		 \left(-\frac{n}{4} +  \sigma \right) 
		\} }}
\| u_1 \|_{{{1}}} 
+ e^{{-\vare_\sigma t}} \|u_1 \|_{{H^{-2 \sigma} }}
\nonumber \\& \qquad 
+	 {t}^{ \frac{1}{1-\sigma}(-\frac{n}{4} + \sigma -\frac{\theta}{2} )}
\left\| | \cdot |^\theta  u_1 \right\|_1,
\label{I2est}
\\
&\left\| K_0(t,\cdot)\ast u_0
- H_{\sigma}(t,x) \int_{\R^n} u_0(y)dy  \right\|_2 
\nonumber \\
&\lesssim
\langle t \rangle 
^{{		\frac{1}{1 - \sigma}\left(-\frac{n}{4} +  2\sigma - 1 \right)}}
\|u_0\|_1
+ 
e^{-\vare_{\sigma} t} \| u_0 \|_2
+	 	{t}^{ \frac{1}{1-\sigma}(-\frac{n}{4}  - \frac{\theta}{2} )}
\left\| | \cdot |^\theta  u_0 \right\|_1. 
\label{I0est}
\end{align}
\end{lemma}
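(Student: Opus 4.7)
For \eqref{I2est}, I would use the decomposition
\begin{align*}
K_1(t,\cdot)*u_1 - \vartheta_1 G_\sigma(t,\cdot)
&= \bigl(\F^{-1}[\hat K_1^+ \chi_{low}] - \tilde G_\sigma\bigr)*u_1 + \F^{-1}[\hat K_1^- \chi_{low}]*u_1 \\
&\quad + \tilde G_\sigma * (u_1 - \vartheta_1 \delta_0) + K_{1,mid}*u_1 + K_{1,high}*u_1 \\
&\quad - \vartheta_1 \F^{-1}[|\xi|^{-2\sigma} e^{-|\xi|^{2(1-\sigma)}t}(1-\chi_{low})],
\end{align*}
where $\tilde G_\sigma := \F^{-1}[|\xi|^{-2\sigma} e^{-|\xi|^{2(1-\sigma)}t}\chi_{low}]$, so that $G_\sigma$ equals $\tilde G_\sigma$ plus the last residual, and I have used $\vartheta_1 = \int u_1$. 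The first two summands (principal $+$ piece minus the parabolic kernel, and residual $-$ piece) are controlled directly by Lemma \ref{Kweightdiff} \eqref{K1lowest+diff} and Lemma \ref{Kweight} \eqref{K1lowest-}, applied with $\vartheta=s_1=s_2=0$ and $q_1=q_2=1$; they produce $\langle t\rangle^{\frac{1}{1-\sigma}(-\frac{n}{4}+3\sigma-1)}\|u_1\|_1$ and $\langle t\rangle^{\frac{1}{\sigma}(-\frac{n}{4}+\sigma)}\|u_1\|_1$ respectively, whose maximum is the first term on the right of \eqref{I2est}.

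Since $\vartheta_1=\int u_1$, the summand $\tilde G_\sigma*(u_1-\vartheta_1\delta_0)$ equals $\int(\tilde G_\sigma(t,x-y)-\tilde G_\sigma(t,x))u_1(y)\,dy$. I would estimate its $L^2$ norm by Minkowski and Plancherel together with the elementary bound $|e^{-iy\cdot\xi}-1|\le 2^{1-\theta}(|y|\,|\xi|)^\theta$, valid for $\theta\in[0,1]$, thereby reducing the matter to evaluating $\||\xi|^{\theta-2\sigma}e^{-|\xi|^{2(1-\sigma)}t}\chi_{low}\|_2$. The scaling $\xi=t^{-1/(2(1-\sigma))}\eta$ shows this norm is $\sim t^{\frac{1}{1-\sigma}(-\frac{n}{4}+\sigma-\theta/2)}$ (the resulting $\eta$-integral converges because $2\theta-4\sigma>-n$ whenever $\sigma<1/2$ and $n\ge 2$). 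Multiplying by $|y|^\theta$ and integrating against $|u_1(y)|$ then produces the third summand of \eqref{I2est}.

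For the mid- and high-frequency pieces, Lemmas \ref{middle} and \ref{high} (with $s=0$) give $\|K_{1,mid}*u_1\|_2+\|K_{1,high}*u_1\|_2\lesssim e^{-\vare_\sigma t}\|(1-\Delta)^{-1/2}u_1\|_2\le e^{-\vare_\sigma t}\|u_1\|_{H^{-2\sigma}}$, using $\sigma<1/2$. The residual kernel $\F^{-1}[|\xi|^{-2\sigma}e^{-|\xi|^{2(1-\sigma)}t}(1-\chi_{low})]$ has Fourier support in $|\xi|\gtrsim 1$, where $|\xi|^{-2\sigma}$ is bounded and $e^{-|\xi|^{2(1-\sigma)}t}\le e^{-ct}$, so Plancherel yields an $L^2$ bound of the form $e^{-ct}$, which multiplied by $|\vartheta_1|\le\|u_1\|_1$ is absorbed into the first (polynomial) term of \eqref{I2est}. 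The proof of \eqref{I0est} follows the identical blueprint with $\tilde H_\sigma:=\F^{-1}[e^{-|\xi|^{2(1-\sigma)}t}\chi_{low}]$ in place of $\tilde G_\sigma$, invoking \eqref{K0lowest+diff}, \eqref{K0lowest-}, and the $K_0$-versions of Lemmas \ref{middle}, \ref{high}; here only a single polynomial-in-$t$ factor against $\|u_0\|_1$ survives because, for $\sigma<1/2$, the $K_0^-$ contribution $\langle t\rangle^{\frac{1}{\sigma}(-\frac{n}{4}+2\sigma-1)}$ is already dominated by the $K_0^+$-minus-$\tilde H_\sigma$ contribution $\langle t\rangle^{\frac{1}{1-\sigma}(-\frac{n}{4}+2\sigma-1)}$. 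The one genuinely delicate step is the scaling evaluation of $\||\xi|^{\theta-2\sigma}e^{-|\xi|^{2(1-\sigma)}t}\chi_{low}\|_2$ (and its $\tilde H_\sigma$ analogue), which pins down the exponent governing the $\||\cdot|^\theta u_j\|_1$-weighted term; everything else is a bookkeeping assembly of the kernel estimates from Section 4.
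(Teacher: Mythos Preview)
Your decomposition and the estimates for the first five pieces are essentially the paper's own argument. The gap is in your treatment of the last residual $-\vartheta_1\,\F^{-1}\bigl[|\xi|^{-2\sigma}e^{-|\xi|^{2(1-\sigma)}t}(1-\chi_{low})\bigr]$. You argue that on $\operatorname{supp}(1-\chi_{low})$ the multiplier is pointwise bounded by $Ce^{-ct}$ and conclude via Plancherel that the $L^2$ norm of the residual is $\lesssim e^{-ct}$. But Plancherel turns an $L^2$ bound into another $L^2$ bound, not a pointwise one: you still have to integrate $|\xi|^{-4\sigma}e^{-2|\xi|^{2(1-\sigma)}t}$ over $\{|\xi|\gtrsim 1\}$, and for $t\to 0^+$ this integral diverges (since $4\sigma<n$). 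A scaling computation gives $\|\,|\xi|^{-2\sigma}e^{-|\xi|^{2(1-\sigma)}t}(1-\chi_{low})\|_2\sim t^{\frac{1}{1-\sigma}(-\frac{n}{4}+\sigma)}$ for small $t$, and this term, multiplied by $|\vartheta_1|\le\|u_1\|_1$, is \emph{not} dominated by any of the three summands on the right of \eqref{I2est} when $\theta>0$ (it decays too slowly to match term~1, and $\|u_1\|_1$ is not controlled by $\|\,|\cdot|^\theta u_1\|_1$). The same defect occurs in your sketch for \eqref{I0est}.

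The paper avoids this by arranging the decomposition so that the high/mid-frequency part of $G_\sigma$ stays \emph{convolved with $u_1$} rather than multiplied by the scalar $\vartheta_1$: it writes
\[
K_1*u_1-\vartheta_1 G_\sigma
= I_{1,1}+I_{1,2}+K_{1,mh}*u_1
-\F^{-1}[\hat G_\sigma\chi_{mh}]*u_1
+\bigl(G_\sigma*u_1-\vartheta_1 G_\sigma\bigr),
\]
so that the dangerous piece becomes $\|\hat G_\sigma\chi_{mh}\,\hat u_1\|_2\lesssim e^{-\vare_\sigma t}\|\,|\xi|^{-2\sigma}\hat u_1\|_{L^2(|\xi|\gtrsim 1)}\lesssim e^{-\vare_\sigma t}\|u_1\|_{H^{-2\sigma}}$, which is uniform down to $t=0$ and matches the second term in \eqref{I2est}. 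Replacing your last two pieces by $(G_\sigma*u_1-\vartheta_1 G_\sigma)-\F^{-1}[\hat G_\sigma\chi_{mh}]*u_1$ (an algebraic identity) fixes the argument; the weighted-phase estimate you wrote for $\tilde G_\sigma*(u_1-\vartheta_1\delta_0)$ carries over verbatim to the full $G_\sigma$ and gives exactly term~3.
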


\begin{proof}
First we prove \eqref{I2est}.  We have
\begin{equation}\label{I2_2}
\begin{aligned}
&\left\| K_1(t,\cdot)\ast u_1 
- G_{\sigma}(t,\cdot) \int_{\R^n}u_1 (y)dy  \right\|	
\\	
&\le 
\Norm{
	\F^{-1}[\hat K_1^+(t,\xi) \chi_{low}]  * u_1 
	-  \F^{-1}[ |\xi|^{-2\sigma} e^{- |\xi|^{2(1 - \sigma)} t} \chi_{low}]  * u_1}{2}
\\
&\quad + \Norm{\F^{-1}[\hat K_1^-(t,\xi) \chi_{low}]  * u_1}{{2}} 
+\Norm{ K_{1,mh}(t,\cdot) * u_1 }{{2}}		
\\
& \quad + \Norm{ \F^{-1}\left[\hat G_{\sigma}(t,\xi)\chi_{mh}                  \right]*  u_1 }{{2}}		
+	\|	G_{\sigma}(t,\cdot)\ast u_1 - G_{\sigma}(t,\cdot) \int_{\R^n}u_1 (y)dy \|_2
\\
&=: I_{1,1} + I_{1,2}+ I_{1,3} + I_{1,4} + I_{1,5}. 
\end{aligned}
\end{equation}
By \eqref{K1lowest+diff} and \eqref{K1lowest-} for $q_1 = q_2 = 1$ and $\vartheta = s_1 = s_2 = 0$, we have 
\begin{align}\label{I21}
I_{1,1} + I_{1,2}
\lesssim 
\langle t \rangle ^{{ \frac{1}{1 - \sigma} \left(-\frac{n}{4}+ 3\sigma - 1 \right) }} \|u_1\|_{{1}}
+ \langle t \rangle ^{{ \frac{1}{\sigma} \left(-\frac{n}{4} + \sigma \right) }} \| u_1 \|_1. 
\end{align}	
By \eqref{high_der} and \eqref{mid_der1},
we have
\begin{align}
&		
I_{1,3}		\lesssim e^{-\vare_{\sigma} t} \|u_1 \|_{{H^{-1}}}.  
\label{K1hm}\end{align}
Since the support of $\chi_{mh} $ is included in $[2^{-\frac{3}{1 - 2\sigma}}, \infty)$ and 
$2^{{ -\frac{6(1 - \sigma)}{1 - 2\sigma} }} > \vare_\sigma$ , we have
\begin{align}
I_{1,4}	
&=\Norm{ \hat G_{\sigma}(t,\cdot)\chi_{mh}   \hat u_1 }{{2}}
= 
\Norm{ |\xi|^{- 2 \sigma}e^{-|\xi|^{2(1-\sigma)}t }\chi_{mh}  \hat u_1 }{{2}}
\lesssim e^{-\vare_{\sigma} t} \|u_1 \|_{{H^{-2 \sigma}}}. 
\label{G1hm}\end{align}
It is written that
\begin{align}
&I_{1,5} = \| \hat G_{\sigma}(t,\cdot)(\hat u_1(\cdot)- \hat u_1(0)) \|_{{2}}.  
\label{Gest01}\end{align}
Since $\hat G_{\sigma}(t,\xi) = |\xi|^{-2\sigma}e^{-|\xi|^{2(1-\sigma)}t }$, 
we have by the transformation $t^{\frac{1}{2(1-\sigma)}} r = \rho$ that
\begin{equation*}
\begin{aligned}
\| \hat G_{\sigma}(t,\cdot)|\cdot|^{\theta} \|_{{2}}^2  
&= \int_0^\infty r^{2( \theta-2\sigma) + n - 1}
e^{-2 r^{2(1-\sigma)} t} dr
\\	
&
=t^{- \frac{n+2( \theta-2\sigma)}{2(1-\sigma)}} 
\int_0^\infty \rho^{2(\theta -2\sigma) + n-1}
e^{-2 \rho^{2(1-\sigma)} } d\rho
\sim t^{- \frac{n+2(\theta-2\sigma)}{2(1-\sigma)}},
\end{aligned}
\end{equation*}
that is, 
\begin{equation}
\| \hat G_{\sigma}(t,\cdot)|\cdot|^{\theta} \|_{{2}}		
\sim {t}^{ \frac{1}{1-\sigma}(-\frac{n}{4}-\frac{\theta}{2} + \sigma)}. 
\label{Gest02}
\end{equation}
On the other hand, since $\theta \in [0,1]$, we have 
\begin{align}\label{phiest1}
|\hat u_1(\xi)- \hat u_1(0) | 
&\le \int_{\R^n} \left|(e^{i x \cdot \xi} - 1)u_1(x) \right| dx
=  \int_{\R^n}\left|  (e^{\frac{i x \cdot \xi}{2}} - e^{\frac{-i x \cdot \xi}{2}} )u_1(x)  \right| dx
\nonumber \\
&  = 2 \int_{\R^n}\left| \sin \left( \frac{ x \cdot \xi}{2} \right) u_1(x)  \right| dx
\nonumber \\
&
\le 2  \int_{\R^n}  \left(\frac{|x \cdot \xi|}{2}\right)^\theta
|u_1(x)| dx 
= 2^{1-\theta} |\xi|^\theta \left\|| \cdot |^\theta  u_1 \right\|_1
\end{align}
for every $\xi \in \R^n$.  
From  \eqref{Gest01}, \eqref{Gest02} and \eqref{phiest1}, it follows that
\begin{equation}
\begin{aligned}\label{I25}
I_{1,5}
&\le 2 \left\| \hat G_{\sigma}(t,\cdot)|\cdot|^{\theta}  \;
\left\|| \cdot |^\theta  u_1 \right\|_1 \right\|_{{2}} 
= 2 \| \hat G_{\sigma}(t,\cdot)|\cdot|^{\theta} \|_{{2}}  \;
\left\|| \cdot |^\theta  u_1 \right\|_1
\\
&\lesssim {t}^{ \frac{1}{1-\sigma}(-\frac{n}{4}-\frac{\theta}{2} + \sigma)}
\left\|| \cdot |^{\theta}  u_1 \right\|_1. 
\end{aligned}\end{equation}	
Substituting \eqref{I21}--\eqref{I25} into \eqref{I2_2}, we obtain \eqref{I2est}.  

Next we prove \eqref{I0est}.  We have
\begin{equation}\label{I0_1}
\begin{aligned}
& \left\| K_0(t,\cdot)\ast u_0
- H_{\sigma}(t,x) \int_{\R^n}u_0(y)dy  \right\|_2  
\\ 
&\le 
\Norm{
	\F^{-1}[\hat K_0^+(t,\cdot) \chi_{low}]  * u_0 
	-  \F^{-1}[ \hat H_{\sigma}(t,\cdot)\chi_{low}]  * u_0}{2}
\\
&\quad + \Norm{\F^{-1}[\hat K_0^-(t,\cdot) \chi_{low}]  * u_0}{{2}} 
+\Norm{ K_{0,mh}(t,\cdot) *  u_0 }{{2}}		
\\
&\quad + \Norm{ \F^{-1}\left[\hat H_{\sigma}(t,\cdot)\chi_{mh}  \right]
	* u_0 }{{2}}		
+	\|	H_{\sigma}(t,\cdot)\ast u_0 - H_{\sigma}(t,\cdot) \int_{\R^n}u_0 (y)dy \|_2
\\
&=: I_{0,1} + I_{0,2}+ I_{0,3} + I_{0,4} + I_{0,5}. 
\end{aligned}
\end{equation}
By \eqref{K0lowest+diff} with $q_j =1, s_j = 0 \;(j = 1,2)$ and $\vartheta = 0$, we have 
\begin{equation}
\begin{aligned}\label{I01}
I_{0,1} 
& = \Norm{
	\F^{-1}\left[
	\left(
	\hat K_0^+(t,\xi) -   e^{- |\xi|^{2(1 - \sigma)} t} \right)
	\chi_{low}\right]  *u_0}{2}
  \\& 
\lesssim 
\langle t \rangle ^{{ \frac{1}{1 - \sigma} \left(-\frac{n}{4}+ 2\sigma - 1  \right) }} \|u_0\|_{{1}}. 
\end{aligned}
\end{equation}
Inequality \eqref{K0lowest-} implies
\begin{align}\label{I02}
& I_{0,2}  \lesssim
\langle t \rangle ^{{ \frac{1}{\sigma} \left(-\frac{n}{4} + 2 \sigma - 1 \right) }} \| u_0 \|_1,	
\end{align}	
and inequalities \eqref{high0_der} and \eqref{mid0_der1} imply
\begin{align}\label{I03}
&  I_{0,3} \lesssim
e^{-\vare_\sigma t} \|u_0 \|_2. 
\end{align}	
Since the support of $\chi_{mh} $ is included in $[2^{-\frac{3}{1 - 2\sigma}}, \infty)$ and 
$2^{{ -\frac{6(1 - \sigma)}{1 - 2\sigma} }} > \vare_\sigma$ , we have
\begin{align}
I_{0,4}	
&	=\Norm{ e^{-|\xi|^{2(1-\sigma)}t }\chi_{mh}  \hat u_0 }{{2}}
\lesssim e^{-\vare_\sigma t} \|u_0 \|_2. 
\label{G00hm}
\end{align}
By \eqref{Gest02} with $\theta$ replaced by $2 \sigma + \theta$, we have	
\begin{equation}
\begin{aligned}
\| \hat H_{\sigma}(t,\cdot)|\cdot|^{\theta} \|_{{2}}
= \| \hat G_{\sigma}(t,\cdot)|\cdot|^{2 \sigma + \theta} \|_{{2}}
&
\sim {t}^{ \frac{1}{1-\sigma}(-\frac{n}{4}-\frac{ \theta}{2} )}. 
\end{aligned}\label{Hest02}
\end{equation}
Then, in the same way as in the proof of \eqref{I25}, by using \eqref{Hest02} instead of \eqref{Gest02}, we have
\begin{align}\label{I05}	
I_{0,5}
&\lesssim
{t}^{ \frac{1}{1-\sigma}(-\frac{n}{4}  - \frac{\theta}{2} )}
\left\| | \cdot |^\theta  u_0 \right\|_1. 
\end{align}
Since $\sigma < 1 - \sigma$, \eqref{I0est} follows from \eqref{I0_1} -- \eqref{I05}.  
\end{proof}

\section{Estimate of the nonlinear term}
		
Throughout this section, we suppose the assumption \eqref{fass}, and  
estimate nonlinear terms by using the argument of \cite{Hy} and \cite{IIW}.  
For $r \in [1,2)$, $\delta \in [ 0,\frac{n}{2} - 2 \sigma)$ and $\bar{s} \ge 1$, 
we define 
\begin{equation}\label{Xdef}
X_{r,\delta,\bar{s}} :=\{ u \in  C((0,\infty);H^{\bar{s}} \cap H^{0,\delta} ); 
\Norm{u}X_{r,\delta,\bar{s}} < \infty \},
\end{equation}
 where
 \begin{equation}\label{Xnormdef}
 	\begin{aligned}
\Norm{\varphi}{{X_{r,\delta,\bar{s}}}} 
	:= \sup_{t > 0} 
&	\Big(
\langle t \rangle ^{{\frac{1}{1 - \sigma} \left(\frac{n}{2}(\frac{1}{r} - \frac{1}{2}) -  \sigma 
		 + \frac{\bar{s}}{2}\right)	}}
	\Norm{(-\Delta)^{{\bar{s}/2}} \varphi(t)}{{2}} 
\\& \qquad 	
+
	\langle t \rangle ^{{\frac{1}{1 - \sigma} \left(\frac{n}{2}(\frac{1}{r} - \frac{1}{2}) - \frac{\delta}{2} - \sigma \right) }}
	\Norm{\langle\cdot \rangle^\delta \varphi(t)}{{2}}			
		\Big). 
	\end{aligned}
	\end{equation}

For $\vartheta \in [0, \frac{n}{2}- 2 \sigma)$, we put
				\begin{align}	\label{zeta_def}			
				\zeta_{r,\vartheta} :&=  \frac{1}{1 - \sigma} 
				\left(-\frac{n}{2}(\frac{1}{r}  - \frac{1}{2} ) + \sigma + \frac{\vartheta}{2}
				- (p-1)\left(\frac{n}{2r}- \sigma \right) +  \frac{1}{2} \right)
				\\
						&=	\frac{1}{1 - \sigma} 
				\left(	-(\frac{n}{2r}  - \sigma )p  + \frac{n}{4} +  \frac{\vartheta}{2} +  \frac{1}{2}	 \right). 
				\nonumber 
				\end{align}
			For $s \ge 0$, we define				
					\begin{align}
					\tilde q_{s} :&= \frac{2n}{n + 2  +2 [s] - 2 s}, 
					\quad \text{that is,}\quad 
					\;	\frac{1}{\tilde q_s} 	= \frac{1}{2} + \frac{1 + [s] - s }{n}. 
					\label{tildeqdef}	 
									\end{align}
		
	\begin{lemma}	\label{function}
Let $r \in [1,2)$, $\delta \in [ 0,\frac{n}{2} - 2 \sigma)$ and $\bar{s} > 2 \sigma$.  
Let $X = X_{r,\delta,\bar{s}}$.  
			Then the following holds for every $\vartheta   \in [0,\delta]$, 
$s \in [0,\bar{s}]$ 
and $u \in X$:
\begin{enumerate}
\item We have
		\begin{align}
	& \Norm{(- \Delta)^{\frac{s}{2}} u(t,\cdot)}{{2}} \lesssim
	\langle t \rangle ^{{\frac{1}{1 - \sigma} \left(-\frac{n}{2}(\frac{1}{r} - \frac{1}{2}) +  \sigma  - \frac{s }{2}
			\right) }}\Norm{u}{X},	
    \label{beta_der}			\\
		&\Norm{|\cdot|^\vartheta     u(t,\cdot)}{{2}} \lesssim 
			\langle t \rangle ^{{\frac{1}{1 - \sigma} \left(-\frac{n}{2}(\frac{1}{r} - \frac{1}{2}) + \frac{\vartheta  }{2} + \sigma \right) }}\Norm{u}{X}. 
			\label{udelta}
			\end{align}			
\item	We have	
	\begin{align}
	\label{u_q2}
	&\Norm{u(t,\cdot)}{{q,2}} \lesssim  
	\langle t \rangle ^{{\frac{1}{1 - \sigma} \left(-\frac{n}{2}(\frac{1}{r} - \frac{1}{q})  + \sigma \right) }}\|{u}\|_{X}, \;
\quad \text{if} \quad
	q = \frac{2n}{n + 2\delta },	
	\\
	&\label{u_q}
		 \Norm{u(t,\cdot)}{{q}} \lesssim
		\langle t \rangle ^{{\frac{1}{1 - \sigma} \left(-\frac{n}{2}(\frac{1}{r} - \frac{1}{q}) +  \sigma 
				\right) }}\Norm{u}{X} 
	\\
			&\nonumber \qquad	\qquad	\qquad	\qquad \qquad 
\quad \text{if} \quad
	q \in 
	\begin{cases}
	(\frac{2n}{n + 2\delta }, \frac{2n}{n - 2 \bar{s}}] \quad &(2\bar{s} < n),
	\\
	(\frac{2n}{n + 2\delta }, \infty) \qquad \quad &(2\bar{s} \ge n). 		
	\end{cases}
\end{align}

\item We have
								\begin{align}
				& \| (-\Delta)^{\frac{[s ]}{2}} f(u(t,\cdot))\|_{{{\tilde q_s }}} 
				\lesssim		
				\langle t \rangle^
				{{	\frac{1}{1- \sigma}
						\left(
						(-\frac{n}{2r} + \sigma)p + \frac{n}{4} + \frac{1}{2}  - \frac{s }{2} 							
						\right)
					}}		
					\|u \|_X^p,
					\label{nonlinear1} 	\\
					& \Norm{(- \Delta)^{\frac{s}{2}} (1 - \Delta)^{-\frac{1}{2}} f(u(t,\cdot))}{{2}} 
					\lesssim
					\langle t \rangle ^
					{{
							\frac{1}{1 - \sigma} 
							\left( (-\frac{n}{2r}+\sigma)p + \frac{n}{4} + \frac{1}{2}  - \frac{s }{2}
							\right) 
						}}
						\|{u} \|_{X}^p
						\label{nabla_beta_-}
						\\
						& 	\Norm{\langle \cdot \rangle^\vartheta  f(u(t,\cdot)) }{{ {2n/(n+2)} }  }
						\lesssim \langle t \rangle ^{\zeta_{r,\vartheta}} \|u\|_{X}^p.  
						\label{fweightq}
						\end{align}	
									\end{enumerate}						
					\end{lemma}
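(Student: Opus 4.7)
The proof reduces each assertion to interpolation between the two terms of $\Norm{\cdot}{X}$, combined with Sobolev embedding (Lemma~E) and Hardy-type inequalities. The pivotal first step is to extract the plain $L^2$-bound
\[
\Norm{u(t,\cdot)}{2} \lesssim \langle t\rangle^{\frac{1}{1-\sigma}(-\frac{n}{2}(\frac{1}{r}-\frac{1}{2}) + \sigma)}\Norm{u}{X},
\]
i.e.\ the $s=0$ case of \eqref{beta_der}. For this I would argue by Fourier splitting: write $\hat u = \hat u\,\mathbf{1}_{|\xi|\le R} + \hat u\,\mathbf{1}_{|\xi|>R}$, bound the high-frequency piece by $R^{-\bar{s}} \Norm{(-\Delta)^{\bar{s}/2} u}{2}$, and the low-frequency piece by $R^\delta \Norm{|\xi|^{-\delta}\hat u}{2}$. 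The latter is controlled by the weighted $L^2$-norm via the identity $|\xi|^{-\delta}\hat u = c\,\F(I_\delta u)$, the Hardy--Littlewood--Sobolev inequality in Lorentz form $\Norm{I_\delta u}{2} \lesssim \Norm{u}{{2n/(n+2\delta),2}}$, and Corollary~A applied to $u = \langle x\rangle^{-\delta}\cdot \langle x\rangle^\delta u$ (which is legitimate since $\delta < n/2 - 2\sigma < n/2$). Optimizing in $R$ gives the geometric mean yielding exactly the claimed exponent.

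With the $L^2$-estimate in hand, \eqref{beta_der} follows by the standard interpolation $\Norm{(-\Delta)^{s/2}u}{2} \le \Norm{u}{2}^{1-s/\bar{s}}\, \Norm{(-\Delta)^{\bar{s}/2}u}{2}^{s/\bar{s}}$, and \eqref{udelta} from the H\"older interpolation $\Norm{|x|^\vartheta u}{2} \le \Norm{u}{2}^{1-\vartheta/\delta}\, \Norm{|x|^\delta u}{2}^{\vartheta/\delta}$. For part (ii), at the cutoff $q = 2n/(n+2\delta)$ one proves \eqref{u_q2} by writing $u = \langle x\rangle^{-\delta} \cdot \langle x\rangle^\delta u$ and applying Corollary~A together with Lorentz H\"older (Lemma~B), while for larger $q$ one derives \eqref{u_q} from Lemma~E applied to $(-\Delta)^{s/2} u$ with $s\in[0,\bar s]$ chosen so that $n/2 - s = n/q$, substituting the bound \eqref{beta_der} on $\Norm{(-\Delta)^{s/2} u}{2}$.

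For part (iii), each nonlinear estimate combines \eqref{fass} with the bounds already proved. The weighted bound \eqref{fweightq} comes from $|f(u)|\lesssim |u|^p$ and H\"older with $\frac{1}{2n/(n+2)} = \frac{1}{2}+\frac{1}{n}$:
\[
\Norm{\langle \cdot \rangle^\vartheta f(u)}{{2n/(n+2)}} \lesssim \Norm{\langle \cdot\rangle^\vartheta u}{2}\; \Norm{u}{n(p-1)}^{p-1},
\]
so that \eqref{udelta} and \eqref{u_q} substituted into this product yield exactly $\zeta_{r,\vartheta}$ after a short bookkeeping of exponents. For \eqref{nonlinear1}, the case $[s]=0$ is analogous with $\tilde q_s$ replacing $2n/(n+2)$; for $[s]\ge 1$ one writes $(-\Delta)^{[s]/2} f(u)$ as a sum of products $f^{(k)}(u)\prod_i \partial^{\alpha_i} u$ with $\sum_i|\alpha_i| = [s]$ via Fa\`a di Bruno / Leibniz, estimates each product by H\"older in $L^{\tilde q_s}$ using parts (i)--(ii), and handles the fractional part $s-[s]$ by a Kato--Ponce / Moser commutator, leveraging the regularity hypothesis $[\bar{s}]<p$. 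Finally, \eqref{nabla_beta_-} follows from \eqref{nonlinear1} combined with the embedding $L^{\tilde q_s} \hookrightarrow \dot H^{-(1+[s]-s)}$ (dual form of Lemma~E, matched to the definition \eqref{tildeqdef} of $\tilde q_s$), which absorbs the missing $1+[s]-s$ derivatives and the $(1-\Delta)^{-1/2}$ factor.

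The main obstacle is twofold: first the sharp $\Norm{u}{2}$-bound in the Fourier-splitting step, where the losses must be avoided by keeping Lorentz indices throughout (Corollary~A rather than crude $L^p$-H\"older, which would fail because $\delta < n/2$); second, the fractional chain rule inside \eqref{nonlinear1} when $s\in([s],[s]+1)$, where the hypothesis $[\bar{s}]<p$ is used in an essentially sharp way to ensure that $|f^{(k)}(u)| \lesssim |u|^{p-k}$ remains integrable against the derivative products.
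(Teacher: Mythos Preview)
Your outline is largely on target and follows the same strategy as the paper (which in turn follows \cite{Hy,IIW}). In fact your Fourier-splitting derivation of the plain $L^2$-bound is \emph{more} careful than what the paper writes: the paper simply interpolates $\Norm{(-\Delta)^{s/2}u}{2}$ between $s=0$ and $s=\bar s$ and appeals to ``the definition of $\Norm{\cdot}{X}$'', but as literally stated the two-term norm \eqref{Xnormdef} only gives $\Norm{u}{2}\le\Norm{\langle\cdot\rangle^\delta u}{2}$ with the \emph{slower} decay exponent containing the extra $+\delta/2$. Your splitting argument correctly recovers the sharp exponent from the two terms of $\Norm{\cdot}{X}$ alone, so this is a genuine improvement in presentation.

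There is one real gap in your sketch. In part~(ii) you only treat $q\ge 2$ via Lemma~E; but the range in \eqref{u_q} includes $q\in(\tfrac{2n}{n+2\delta},2)$, and this sub-range is actually used later (for instance in \eqref{f(u)Lr} with $q=pr$, which can lie below~$2$ when $r$ is close to $1$ and $n$ is large). Sobolev embedding does not reach $q<2$, and Lorentz interpolation from the endpoint \eqref{u_q2} only yields $L^{q,2}\supsetneq L^q$ there. The paper fills this by the real-space weighted interpolation
\[
\Norm{u}{q}\lesssim \Norm{u}{2}^{\,1-\frac{n}{\delta}\frac{2-q}{2q}}\,\Norm{|\cdot|^\delta u}{2}^{\,\frac{n}{\delta}\frac{2-q}{2q}}\qquad\Bigl(\tfrac{2n}{n+2\delta}<q<2\Bigr),
\]
proved by splitting $\{|x|\le R\}$ versus $\{|x|>R\}$ and optimizing in $R$ (quoted as \cite[(2.12)]{IIW}). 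You should insert this step.

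A smaller point: no fractional chain rule is needed anywhere. Estimate \eqref{nonlinear1} involves only the \emph{integer} order $[s]$, so Fa\`a di Bruno plus H\"older and Sobolev suffice exactly as you say; the fractional remainder $s-[s]$ is absorbed entirely by the Sobolev embedding $\dot W^{[s],\tilde q_s}\hookrightarrow \dot H^{s-1}$ (equivalently $L^{\tilde q_s}\hookrightarrow \dot H^{-(1+[s]-s)}$) that you already invoke in passing from \eqref{nonlinear1} to \eqref{nabla_beta_-}. You can simply drop the Kato--Ponce step.
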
     	
		
			\begin{proof}
	Except use of the weak $L^p$ estimate, we follow the argument of  
			\cite[Lemmas 2.3 and 2.5]{IIW}, which is originated in \cite[Lemma 2.1, 2.3 and 2.5]{Hy}.

		(i) By Plancherel's  theroem and H\"{o}lder's inequality, we have 
				\begin{align*}
				& \Norm{(- \Delta)^{\frac{s}{2}} u(t,\cdot)}{{2}}
				= \|{|\cdot|^{s}  \hat  u(t,\cdot)}\|_{{2}} 
				\\
				&\le  \|{|\cdot|^{\bar{s}} \hat  u(t,\cdot)}\|_{{2}}^{\frac{s }{\bar{s}}} 	
				\|\hat  u(t,\cdot)\|_{{2}}^{1 -\frac{s }{\bar{s}}}
				= \|{(-\Delta)^{\frac{\bar{s}}{2}}   u(t,\cdot)}\|_{{2}}^{\frac{s }{\bar{s}}} 	
				\|u(t,\cdot) \|_{{2}}^{1 -\frac{s }{\bar{s}}},
			\end{align*}
			which together with the definition of $\|\cdot\|_X$ implies \eqref{beta_der}.  
			In the same way,  we see that \eqref{udelta} holds  by H\"{o}lder's inequality. 
						
		(ii) We first consider the case 	$\frac{2n}{n + 2\delta } = q$, that is, 
				$n(\frac{1}{q} - \frac{1}{2}) = \delta$.  Then
					\begin{equation}\label{q_cri}
					\| u(t,\cdot) \|_{{q,2}} \lesssim   
					\||\cdot|^{-\delta} \|_{{\frac{n}{\delta}, \infty}}
					\||\cdot|^{\delta}u(t,\cdot)\|_{{2}} 
					\lesssim 	\||\cdot|^{\delta}u \|_{{2}},
					\end{equation}		 
which together with the definition of $\|\cdot\|_X$ implies \eqref{u_q2}.  						 
		
	Next we  consider the case $\frac{2n}{n + 2\delta } < q < 2$, that is,  
			$0 < n \left( \frac{1}{q} - \frac{1}{2}\right) < \delta$.     
			In \cite[(2.12)]{IIW} (see also \cite[(2.5)]{Hy}), the following is shown 
				\begin{equation}\label{2.12}
		\| u \|_{{q}} \lesssim   
		\|u\|_{{2}}^{1 - \frac{n}{\delta }\frac{2 - q}{2q}}
		\||\cdot|^\delta u\|_{{2}}^{\frac{n}{\delta }\frac{2 - q}{2q}},
			\end{equation}	
when $ 0 < n \left( \frac{1}{q} - \frac{1}{2}\right) < \delta$.  This together with the definition of $\|\cdot\|_X$ implies \eqref{u_q}.

We consider the case $2 \le q \le \frac{2n}{n - 2 \bar{s}}$ and $\bar{s} < 2 n$.  
Let $\tilde s = \frac{n}{2} - \frac{n}{q} (\le \bar{s})$.  Then Sobolev's embedding theorem together with  \eqref{beta_der}  implies 
		\begin{align*}
		\| u \|_{{q}} \le \| u \|_{{\dot H^{\tilde s}}} 
			\le 
		\langle t \rangle ^{{\frac{1}{1 - \sigma} \left(-\frac{n}{2}(\frac{1}{r} - \frac{1}{2})  + \sigma - \frac{s}{2} \right) }}\|{u}\|_{X}
		= \langle t \rangle ^{{\frac{1}{1 - \sigma} \left(-\frac{n}{2}(\frac{1}{r} - \frac{1}{q})  + \sigma \right) }}\|{u}\|_{X},
					\end{align*}
	that is, \eqref{u_q} holds. 	
	In the same way, \eqref{u_q} holds also in the case $2 \le q < \infty$ and $\bar{s} \le  2 n$. 
	  
(iii) 
We put
\begin{equation}
\label{kappadef}
\kappa := \frac{n}{2} - \frac{1}{p-1}. 
\end{equation}
By the Leibniz rule together with the assumption \eqref{fass}, we have  
\begin{equation}\label{Leibniz}
\begin{aligned}
\| \nabla^{[s]} f(u(t,\cdot)) \|_{{{\tilde q_s}}}
\le 
\Big\|u(t,\cdot)^{p-[s]} \sum_{{\sum_{j=1}^{[s]}|\nu_j |= [s]
	}}
	\prod_{j=1}^{[s]}
	| D_x^{\nu_j} u(t,\cdot)| \Big \|_{{{\tilde q_s}}},
	\end{aligned}
	\end{equation}	
	where $\nu_j$ is a multi index.   
Put $k_j = |\nu_j|$.   
Then, as in the proof of \cite[Lemma 2.5]{IIW},   
we can choose $s_j \in [0,k_j - \frac{1}{p-1})$ such that $q_j$ $(j =1,\cdots,[s])$ defined by 
 \begin{align}
 \frac{1}{q_0} &= \left( \frac{1}{2} - \frac{\kappa}{n} \right)(p - [s]),
\label{q0} \\
 \frac{1}{q_j} &=  \frac{1}{2} - \frac{\kappa + s_j - k_j}{n} \;\;
 (j = 1,\cdots,[s])
 \label{qj} 
 \end{align}
satisfies 
 \begin{align}\label{qj_cond1}
&\sum_{j = 0}^{[s]}\frac{1}{q_j} = \frac{1}{\tilde q_s},
\\
\label{qj_cond2}
& (p - [s])q_0 \in [2,\infty) \; \text{and} \; q_j  \in [2,\infty) \; \text{for} \; j = 1,\cdots,[s]. 
\end{align}
Since
	\begin{align*}
&	\frac{1}{\tilde{q_{s}}}   - \frac{1}{\textbf{}q_0} 
= \frac{1}{2} + \frac{1}{n} + \frac{[s] - s}{n} 
	-  \left( \frac{1}{2} - \frac{\kappa}{n} \right)(p - [s]),
\\
& \sum_{j=1}^{[s]}\frac{1}{q_j} 
=  \sum_{j=1}^{[s]}
 \left(
  \frac{1}{2} - \frac{\kappa + s_j - k_j}{n} 
  \right)
 = (\frac{1}{2} - \frac{\kappa}{n})[s] 
  - \frac{1}{n}  \sum_{j=1}^{[s]}   s_j  + \frac{[s]}{n}, 
  	\end{align*}
the condition \eqref{qj_cond1} is equivalent to
\begin{equation}\label{betaj_cond}
\sum_{j=1}^{[s]} s_j	
= s - \kappa,
\end{equation}		
and thus, $\kappa + s_j \le s$.  
Taking \eqref{q0} -- \eqref{qj_cond2} into account, we apply  H\"{o}lder's inequality 
and Sobolev's embedding theorem to \eqref{Leibniz}. Then we obtain
\begin{equation}\label{nonlinearder}
\begin{aligned}
\| \nabla^{[s]} f(u(t,\cdot)) \|_{{{\tilde q_s}}}
&	\lesssim  
\|u^{p-[s]}\|_{{{q_0}}}
\sum_{{k_j \ge 0, 
		\sum_{j=1}^{[s]}k_j = [s]
	}}
	\prod_{j=1}^{[s]}
	\| |\nabla|^{k_j} u(t,\cdot) \|_{{{q_j}}} 
	\\
	&\lesssim  
		\| \nabla^{\kappa} u \|_{{2}}^{p - [s] }
	\sum_{{k_j \ge 0, 
			\sum_{j=1}^{[s]}k_j = [s]
		}}
		\prod_{j=1}^{[s]}
	\| |\nabla|^{\kappa + s_j} u(t,\cdot) \|_{{2}},						
		\end{aligned}
		\end{equation}
		where $|\nabla| := (-\Delta)^{1/2}$.   
Then estimating the right-hand side of \eqref{nonlinearder} by the definition of $\| \cdot \|_X$,  and using \eqref{betaj_cond} and \eqref{kappadef}, we obtain
\begin{equation*}
\begin{aligned}
\| \nabla^{[s]} f(u(t,\cdot)) \|_{{{\tilde q_s}}}
& \lesssim \langle t \rangle^
{{	\frac{1}{1- \sigma}
		\left(
		(-\frac{n}{2}(\frac{1}{r} - \frac{1}{2}) + \sigma )p
		- \frac{\kappa}{2}(p - [s])
		- \frac{1}{2} \sum_{j=1}^{[s]}
		(\kappa + s_j)			
		\right)
	}}		
	\|u \|_X^p
	\\
	&= 		
	\langle t \rangle^
	{{	\frac{1}{1- \sigma}
			\left(
			(-\frac{n}{2}(\frac{1}{r} - \frac{1}{2}) + \sigma 
			- \frac{\kappa}{2})p - 
			\frac{1}{2}\sum_{j=1}^{[s]} s_j			
			\right)
		}}		
		\|u \|_X^p
		\\
		& = 		
			\langle t \rangle^
			{{	\frac{1}{1- \sigma}
					\left(
					(-\frac{n}{2r} + \sigma)p + \frac{n}{4} + \frac{1}{2}  - \frac{s}{2} 							
					\right)
				}}		
				\|u \|_X^p,
			\end{aligned}
			\end{equation*}
that is, \eqref{nonlinear1} holds.  

	In view of Sobolev's embedding theorem, inequality \eqref{nabla_beta_-} follows from \eqref{nonlinear1}.  

By H\"{o}lder's inequality together with the assumption \eqref{fass}, we have	
		\begin{equation}\label{fweightq1}
		\begin{aligned}
		& 	\|\langle \cdot \rangle^\vartheta  f(u(t,\cdot)) \|_{{2n/(n+2)} }  
		\lesssim 
		\| \langle \cdot \rangle^\vartheta u(t,\cdot)\|_2
		\|u(t,\cdot)\|^{p-1}_{{{{n(p-1)}} }}. 
			\end{aligned}				
		\end{equation}
The assumption \eqref{pass} implies 
		$$
		n(p-1) \ge \frac{2rn}{n - 2r\sigma} > 
\frac{2n}{n+ 2 \delta}, 
	$$ 
and \eqref{pass2} implies
$$
n(p-1) \le \frac{2n}{n - 2 \bar{s}} 
$$
if $2 \bar{s} < n$. 
Thus, we can apply \eqref{udelta} and \eqref{u_q} with $q = n(p-1)$, which together with \eqref{fweightq1} yields \eqref{fweightq}. 
					\end{proof}     
		
\section{Estimates of a convolution term}

Throughout this section, we suppose the assumption \eqref{fass}.
 
\subsection{Decay estimates}

Throughout this subsection, we suppose the assumption of Proposition 1.  

\begin{lemma}\label{f(u)mh}
Let $\vartheta  > 0$. 
For every $u  \in X = X_{r,\delta,\bar{s}}$, we have 
\begin{equation}\label{sol_high_weight}
		\begin{aligned}
&		\int_0^t 
\| |\cdot|^\vartheta \F^{-1} \left[\hat K^{\pm} \chi_{mh}\right](t - \tau,\cdot) * f(u(\tau,\cdot)) \|_2 d\tau
\lesssim \langle t \rangle^{\zeta_{r, \vartheta} }	\|u\|_X^p,
		\end{aligned}
		\end{equation}
		where $\zeta_{r,\vartheta}$ is the number defined by \eqref{zeta_def}.  
\end{lemma}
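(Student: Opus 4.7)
The plan is to combine the exponential-in-time decay of the middle- and high-frequency parts of the kernel (Lemmas \ref{high} and \ref{middle}) with the weighted nonlinear bound \eqref{fweightq} from Lemma \ref{function}, and then perform a routine time integration.

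\textbf{Steps.} First, since $|x|^\vartheta \le \langle x \rangle^\vartheta$ for $\vartheta \ge 0$, I replace $|\cdot|^\vartheta$ in the integrand by $\langle \cdot \rangle^\vartheta$. Writing $\chi_{mh} = \chi_{mid} + \chi_{high}$ and reproducing the arguments that lead to \eqref{high_weight} and \eqref{mid_weight1} (applied to each individual branch $\hat K^\pm$ rather than to the full $\hat K_1$), I would obtain
\begin{equation*}
\| \langle \cdot \rangle^\vartheta ( \F^{-1}[\hat K^\pm \chi_{mh}](t-\tau, \cdot) * f(u(\tau,\cdot)) ) \|_2
\lesssim e^{-\vare_\sigma(t-\tau)} \| (1-\Delta)^{-1/2} \langle \cdot \rangle^\vartheta f(u(\tau,\cdot)) \|_2.
\end{equation*}
Next, the dual Sobolev embedding $(1-\Delta)^{-1/2}: L^{2n/(n+2)} \to L^2$ (dual to $H^1 \hookrightarrow L^{2n/(n-2)}$), combined with \eqref{fweightq}, gives
\begin{equation*}
\| (1-\Delta)^{-1/2} \langle \cdot \rangle^\vartheta f(u(\tau,\cdot)) \|_2
\lesssim \| \langle \cdot \rangle^\vartheta f(u(\tau,\cdot)) \|_{2n/(n+2)}
\lesssim \langle \tau \rangle^{\zeta_{r,\vartheta}} \|u\|_X^p.
\end{equation*}
Finally, I integrate the resulting bound $e^{-\vare_\sigma(t-\tau)} \langle \tau \rangle^{\zeta_{r,\vartheta}}$ in $\tau$ by splitting $(0,t)$ at $t/2$: on $[t/2,t]$ the polynomial factor is $\lesssim \langle t \rangle^{\zeta_{r,\vartheta}}$ for either sign of $\zeta_{r,\vartheta}$ and $\int_{t/2}^t e^{-\vare_\sigma(t-\tau)}\,d\tau$ is bounded, while on $[0,t/2]$ the exponential is at most $e^{-\vare_\sigma t/2}$ and absorbs any polynomial growth in $\tau$. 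Together these yield the asserted bound $\langle t \rangle^{\zeta_{r,\vartheta}} \|u\|_X^p$.

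\textbf{Main obstacle.} The only delicate point is the first step: the estimates \eqref{high_weight} and \eqref{mid_weight1} were proved for the combined kernel $K_1 = \F^{-1}[\hat K_1^+] + \F^{-1}[\hat K_1^-]$, whereas here we need them for each branch $\hat K^\pm$ separately, and the prefactor $1/(\lambda_+-\lambda_-)$ in \eqref{K1+-def} has a removable singularity at the interior transition $|\xi|^{1-2\sigma} = 1/2$ of the middle frequency band. I would handle this by splitting $\supp \chi_{mh}$ into the sub-region where $\lambda_\pm$ are real and the one where they are complex conjugate, and applying the argument of Lemma \ref{middle} separately on each, using the uniform bound $\Re \lambda_\pm \le -c < 0$ and the smoothness of $1/(\lambda_+-\lambda_-)$ and its $\xi$-derivatives away from the transition point. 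Once this is in place, the remainder of the proof is a direct assembly of the preceding lemmas.
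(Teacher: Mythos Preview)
Your proposal is correct and follows essentially the same route as the paper: apply the weighted exponential-decay estimates \eqref{high_weight} and \eqref{mid_weight1}, then the Sobolev embedding $L^{2n/(n+2)} \hookrightarrow H^{-1}$, then the nonlinear bound \eqref{fweightq}, and finally integrate the resulting $e^{-\vare_\sigma(t-\tau)}\langle\tau\rangle^{\zeta_{r,\vartheta}}$ in $\tau$.

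On your ``main obstacle'': the paper does not address it at all --- it simply cites \eqref{high_weight} and \eqref{mid_weight1}, which are stated for the \emph{combined} kernel $K_1$, and writes $\hat K^{\pm}$ on both sides without further comment. In fact the only downstream use of this lemma (in the proof of \eqref{conv_weight}) needs just the $K_1$ version, so the $\pm$ in the statement should most likely be read as a shorthand for the full $K_1$. Your worry is legitimate for the individual branches, since $1/(\lambda_+-\lambda_-)$ genuinely blows up at the transition point $|\xi|^{1-2\sigma}=\tfrac12$ inside $\supp\chi_{mid}$ and your suggested splitting into real/complex regions does not remove that singularity; but this issue disappears once one works with the sum $K_1$, which is how both you and the paper effectively proceed.
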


\begin{proof}
	By \eqref{high_weight} and \eqref{mid_weight1},  Sobolev's embedding theorem and \eqref{fweightq}, 
	we have 
		\begin{equation*}
		\begin{aligned}
	&\Norm{|\cdot|^\vartheta  	\F^{-1} \left[\hat K^{\pm} \chi_{mh}\right](t - \tau,\cdot) * f(u(\tau,\cdot)) }{{2}}
	\\
	& \quad \lesssim e^{-\vare_\sigma (t - \tau)} 
	\| (1 - \Delta)^{-\frac{1}{2}} \langle \cdot \rangle^\vartheta  f(u(\tau,\cdot))\|_{{2}} 
	\\
	& \quad \lesssim	 e^{-\vare_\sigma (t - \tau)} 
	\|\langle \cdot \rangle^\vartheta   f(u(\tau,\cdot))\|_{{{{\frac{2n}{n+2}}} }} 
	\lesssim  e^{-\vare_\sigma (t - \tau)} \langle \tau \rangle^{\zeta_{r,\vartheta}} \|u \|_X^p,
		\end{aligned}
		\end{equation*}
which yields \eqref{sol_high_weight}. 	
		\end{proof}

\begin{lemma}	\label{f(u)}
For every $u,v  \in X = X_{r,\delta,\bar{s}}$, we have 
\begin{align}
&\int_0^t \Norm{|\cdot|^\delta  \left(K_{1}(t - \tau,\cdot) * f(u(\tau,\cdot))\right) }{{2}}d\tau
\lesssim
	\langle t \rangle ^{{ \frac{1}{1 - \sigma} \left(-\frac{n}{2}(\frac{1}{r} - \frac{1}{2}) + \frac{\delta}{2} + \sigma \right) }} \|u\|_{X}^p. 
	\label{conv_weight}
	\\
	&\int_0^t \Norm{|\cdot|^\delta  \left(K_{1}(t - \tau,\cdot) *  
		(f(u(\tau,\cdot)) -  f(v(\tau,\cdot))) \right) }{{2}}d\tau
\nonumber 
\\
& \qquad \quad 	\lesssim
	\langle t \rangle ^{{ \frac{1}{1 - \sigma} \left(-\frac{n}{2}(\frac{1}{r} - \frac{1}{2}) + \frac{\delta}{2} + \sigma \right) }} 
	(\|u \|_X +\| v\|_{X})^{p-1} \|u - v\|_X.  
		\label{conv_weight2}	
\end{align}
\end{lemma}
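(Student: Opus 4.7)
The plan is to decompose $K_1 = K_{1,low} + K_{1,mh}$ and treat each piece separately. For the middle-plus-high frequency piece, Lemma \ref{f(u)mh} applied with $\vartheta = \delta$ yields immediately
$$
\int_0^t \Norm{|\cdot|^\delta \bigl(K_{1,mh}(t-\tau,\cdot)*f(u(\tau,\cdot))\bigr)}{2}\,d\tau \lesssim \langle t\rangle^{\zeta_{r,\delta}}\|u\|_X^p .
$$
A direct calculation from definition \eqref{zeta_def} gives
$$
\zeta_{r,\delta} - \tfrac{1}{1-\sigma}\bigl(-\tfrac{n}{2}(\tfrac{1}{r}-\tfrac{1}{2}) + \tfrac{\delta}{2} + \sigma\bigr) = \tfrac{1}{1-\sigma}\Bigl(\tfrac{1}{2} - (p-1)\bigl(\tfrac{n}{2r}-\sigma\bigr)\Bigr),
$$
which is strictly negative by the assumption $p > p_{\sigma,r}$ of Proposition \ref{existence}. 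Hence the middle-high contribution already lies below the target rate of \eqref{conv_weight}.

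For the low frequency piece I would apply the weighted estimate \eqref{K1lowweight} with $\vartheta = \delta$ and $s_1 = s_2 = 0$. The admissibility $\delta < n/2 - 2\sigma$ comes from \eqref{delta}, and I would take $q_1, q_2 \in [1,2)$ at (or just above) the critical thresholds in \eqref{qass2}, chosen so that $q_1$ matches the exponent $\hat q_1 = nr/(n-r\delta)$ appearing in Proposition \ref{existence} and $1/q_2 = 1/2 + 2\sigma/n$. This reduces matters to bounding $\|f(u(\tau))\|_{q_1}'$ and $\||\cdot|^\delta f(u(\tau))\|_{q_2}'$. Using the pointwise bound $|f(u)|\le C|u|^p$ from \eqref{fass}, Hölder's inequality converts these into products of a factor $\||\cdot|^\delta u(\tau)\|_2^{\kappa}$ (with $\kappa = 0$ or $1$) and powers $\|u(\tau)\|_\rho^{p-\kappa}$ for suitable $\rho$; Lemma \ref{function}(i)--(ii) then expresses each factor as an explicit power of $\langle\tau\rangle$ times $\|u\|_X$. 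The key observation is that the conditions \eqref{pass}--\eqref{pass2} are exactly what guarantee the auxiliary exponents $\rho$ lie in the admissible range $[\tfrac{2n}{n+2\delta},\tfrac{2n}{n-2\bar s}]$ of \eqref{u_q}.

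The remaining work is to integrate in $\tau$. The integrand becomes a finite sum of products $\langle t-\tau\rangle^{A_j}\langle\tau\rangle^{B_j}\|u\|_X^p$. I would split at $t/2$ and apply the standard device $\langle t-\tau\rangle\sim \langle t\rangle$ on $[0,t/2]$ and $\langle\tau\rangle\sim\langle t\rangle$ on $[t/2,t]$; this yields the target decay rate $\langle t\rangle^{\frac{1}{1-\sigma}(-\frac{n}{2}(1/r-1/2)+\delta/2+\sigma)}$. The main obstacle lies precisely here: the tuning of $q_1,q_2$ must be such that the two half-integrals both converge and that the resulting exponent does not exceed the target. This arithmetic is exactly what forces the placement of $\delta$ in \eqref{delta}--\eqref{delta2} together with $p > p_{\sigma,r}$; the verification is elementary but somewhat delicate, and it is where each of the numerical hypotheses of Proposition \ref{existence} is used.

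The difference estimate \eqref{conv_weight2} follows verbatim upon replacing the bound $|f(u)|\le C|u|^p$ by the Lipschitz-type bound
$$
|f(u)-f(v)|\le C(|u|+|v|)^{p-1}|u-v|
$$
from \eqref{fass}. The same Hölder argument now produces $(\|u\|_X + \|v\|_X)^{p-1}\|u-v\|_X$ in place of $\|u\|_X^p$, after which the $\tau$-integration is identical.
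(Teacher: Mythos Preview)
Your overall strategy is correct and matches the paper: decompose $K_1 = K_{1,low} + K_{1,mh}$, dispatch the middle--high piece via Lemma~\ref{f(u)mh}, apply the weighted low-frequency estimate \eqref{K1lowweight} with $\vartheta = \delta$, $s_1 = s_2 = 0$, and then integrate in $\tau$ using \eqref{integ}. The difference estimate \eqref{conv_weight2} indeed follows by the same argument with the Lipschitz bound from \eqref{fass}.

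However, your concrete choice of exponents $q_1, q_2$ is wrong, and this is not merely cosmetic. You propose $q_1 = \hat q_1 = nr/(n - r\delta)$, i.e.\ $1/q_1 = 1/r - \delta/n$. The admissibility condition \eqref{qass2} with $\vartheta = \delta$, $s_1 = s_2 = 0$ reads $1/q_1 \ge 1/2 + (2\sigma + \delta)/n$, which for your choice becomes $n(1/r - 1/2) - 2\sigma \ge 2\delta$. But the hypothesis \eqref{delta} only gives $\delta < n(1/r - 1/2) - 2\sigma$, not $2\delta \le n(1/r - 1/2) - 2\sigma$; so your $q_1$ is inadmissible whenever $\delta$ is taken near the upper end of its allowed range. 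Similarly, your $q_2$ with $1/q_2 = 1/2 + 2\sigma/n$ sits at the threshold and makes the kernel factor $\langle t-\tau\rangle^0$, after which you would need $\||\cdot|^\delta f(u(\tau))\|_{2n/(n+4\sigma)}$; this norm is not supplied by Lemma~\ref{function}, and the H\"older splitting you sketch forces $\|u\|_\rho$ with $\rho = n(p-1)/(2\sigma)$, which can exceed the Sobolev endpoint $2n/(n-2\bar s)$ allowed by \eqref{u_q}.

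The paper instead takes $q_1 = r$ and $q_2 = 2n/(n+2)$. With $q_1 = r$ the first term of \eqref{K1lowweight} already carries the target exponent $\tfrac{1}{1-\sigma}\bigl(-\tfrac{n}{2}(\tfrac{1}{r}-\tfrac{1}{2}) + \tfrac{\delta}{2} + \sigma\bigr)$, and the required input $\|f(u(\tau))\|_r'$ is bounded directly by $\|u(\tau)\|_{pr}^p$ via \eqref{u_q} (or \eqref{u_q2}), with $pr$ lying in the admissible range thanks to \eqref{delta2} and \eqref{pass2}. With $q_2 = 2n/(n+2)$ the second term matches exactly the estimate \eqref{fweightq} already proved in Lemma~\ref{function}. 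The $\tau$-integration then closes via \eqref{integ}, using \eqref{delta} to ensure the kernel exponent is $\ge -1$ and \eqref{pass} to ensure the source exponent is $< -1$. No splitting at $t/2$ is needed for this lemma.
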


\begin{proof}
	First, we estimate the low frequency part.  
	By \eqref{K1lowweight} with $q_1 = r$, $q_2 = \frac{2n}{n+2}$ and $\vartheta = \delta$, $s_1 = s_2 = 0$,  we have  
	\begin{align}\label{I1I2}
	&\int_0^t 
	\Norm{|\cdot|^\delta \left( K_{1,low  }(t - \tau,\cdot) * f(u(\tau,\cdot)) \right)}{{2}}d\tau
	\le I_1 + I_2,  
	\end{align}
	where
	\begin{align*}
&	I_1: =
	\begin{cases}
	& \int_0^{t} 
	\langle t -\tau \rangle ^{{ \frac{1}{1 - \sigma} \left(-\frac{n}{2}(\frac{1}{r} - \frac{1}{2}) 
			+ \frac{\delta}{2} + \sigma \right) }} 
	\Norm{ f(u(\tau,\cdot))}{{{r}}}d\tau 
\; \quad \text{if} \quad r \ge 1,
	\\
&  \int_0^{t} 
\langle t -\tau \rangle ^{{ \frac{1}{1 - \sigma} \left(-\frac{n}{2}(\frac{1}{r} - \frac{1}{2}) 
		+ \frac{\delta}{2} + \sigma \right) }} 
\Norm{ f(u(\tau,\cdot))}{{{r,2}}}d\tau
\quad \text{if} \quad r > 1,\end{cases}
	\\
	&I_2 := 
	\int_0^{t} 
	\langle t -\tau \rangle ^{{ \frac{1}{1 - \sigma} \left(- \frac{1}{2} + \sigma 	\right) }} 
	\Norm{|\cdot|^\delta  f(u(\tau,\cdot)) }{{{2n/n+2}}}
	d\tau. 
	\end{align*}
Since $\bar{s} \ge 1$, the assumption \eqref{pass2} implies that 
$pr \le 2 p  \le \frac{2n}{n - 2 \bar{s}}$ if $2 s_o < n$.   
From \eqref{delta2}, it follows that 
$\frac{2n}{n + 2 \delta }< pr$ in the case $r = 1$,  and 
$\frac{2n}{n + 2 \delta } \le pr$ in the case $r > 1$.  
Hence, in the case $r = 1$, we can apply  \eqref{u_q} with $q = pr$ to obtain
\begin{align}
 \Norm{  f(u(\tau,\cdot)) }{{{r}}} \lesssim \|u(\tau,\cdot)\|_{{{pr}}}^p
&\lesssim  	
 \langle \tau \rangle ^{{ \frac{1}{1 - \sigma} 
		\left( -\frac{n}{2}(\frac{p-1}{r}) + p \sigma  \right) }}
 \|u\|_{X}^p. 
\label{f(u)Lr} 
\end{align}
In the case $r > 1$, we can apply \eqref{u_q2} with $q = pr$ to obtain
\begin{align}
\Norm{  f(u(\tau,\cdot)) }{{{r,2}}} \lesssim \|u(\tau,\cdot)\|_{{{pr,2}}}^p
&\lesssim  	
\langle \tau \rangle ^{{ \frac{1}{1 - \sigma} 
		\left( -\frac{n}{2}(\frac{p-1}{r}) + p \sigma  \right) }}
\|u\|_{X}^p. 
\label{f(u)Lr2} 
\end{align}
Substituting \eqref{f(u)Lr}  or \eqref{f(u)Lr2} into $I_1$,  we obtain
\begin{align*}
I_1  \lesssim
& \int_0^{t} \langle t - \tau  \rangle^
{{  \frac{1}{1 - \sigma} \left(-\frac{n}{2}(\frac{1}{r} - \frac{1}{2}) + \frac{\delta}{2} + \sigma \right) }} 
 \langle \tau \rangle^{\frac{1}{1-\sigma}(-\frac{n}{2}(\frac{p}{r} - \frac{1}{r}) + p \sigma)} d\tau  \|u\|_{X}^p. 
\end{align*}
 
The following inequality is commonly used to estimate the nonlinear term.   
                               \begin{equation}\label{integ}
                               \int_0^t \langle t - \tau \rangle^{\rho} s^{\eta}d\tau
                               \\
                               \lesssim \begin{cases}
                               \langle t \rangle^{\max \{\rho,\eta\} }&  \text{if}\quad  \min \{\rho, \eta \} < - 1,\\
                               \langle t \rangle^{\max \{\rho,\eta\} }\log(2+t)& \text{if} \quad \min \{\rho, \eta \} = -1,
                               \\
                               \langle t \rangle^{1+\rho+\eta }& \text{if} \quad  \min \{\rho, \eta \} > -1. 
                               \end{cases}
                               \end{equation}
The assumption that $\delta \ge n(\frac{1}{r} - \frac{1}{2}) - 1$ implies 
\begin{equation}
 \frac{1}{1 - \sigma} 
 \left(
 -\frac{n}{2}(\frac{1}{r} - \frac{1}{2}) + \frac{\delta}{2} + \sigma 
 \right) \ge -1.
\end{equation}
The assumption \eqref{pass} is equivalent to 
$
p \left(\frac{n}{2r} - \sigma \right) > \frac{n}{2r} - \sigma + 1, 
$
which is equivalent to
\begin{equation}\label{less-1}
\frac{1}{1-\sigma}\left(-\frac{n}{2}(\frac{p - 1}{r}) + p \sigma \right) < -1. 
\end{equation}
Hence, by using \eqref{integ}, we obtain
\begin{align}\label{I1}
I_1 \lesssim
& \langle t \rangle^{{  \frac{1}{1 - \sigma} \left(-\frac{n}{2}(\frac{1}{r} - \frac{1}{2}) + \frac{\delta}{2} + \sigma \right) }} \|u\|_{X}^p.  
\end{align}
Since $\frac{1}{1 - \sigma} (- \frac{1}{2}+ \sigma) > -1$, it follows from 
\eqref{fweightq} and \eqref{integ} that
\begin{align*}
 I_2 
& \lesssim
 \int_0^{t} \langle t - \tau  \rangle ^{{  \frac{1}{1 - \sigma} (- \frac{1}{2}+ \sigma)}}
\langle \tau \rangle^{\zeta_{r,\delta}}d\tau  \|u\|_{X}^p
\\
& \lesssim 
\begin{cases}
\langle t  \rangle ^{{  \frac{1}{1 - \sigma} (- \frac{1}{2}+ \sigma)}}\|u\|_{X}^p
& \quad  \text{if} \; \zeta_{r,\delta} < - 1
\\
\langle t   \rangle ^{{  \frac{1}{1 - \sigma} (- \frac{1}{2}+ \sigma) + \zeta_{r,\delta} + 1}}
\log (t + 2) \|u\|_{X}^p
& \quad \text{if} \; \zeta_{r,\delta} \ge -1. 
\end{cases}
\end{align*}
The assumption that $\delta \ge n(\frac{1}{r} - \frac{1}{2}) - 1$ means 
$- \frac{1}{2}+ \sigma \le -\frac{n}{2}(\frac{1}{r} - \frac{1}{2}) + \frac{\delta}{2} + \sigma$. Hence, we have
\begin{align}\label{I2}
I_2 \lesssim
& \langle t \rangle^{{  \frac{1}{1 - \sigma} \left(-\frac{n}{2}(\frac{1}{r} - \frac{1}{2}) + \frac{\delta}{2} + \sigma \right) }} \|u\|_{X}^p 
\end{align}
in the case $\zeta_{r,\delta} < - 1$.  
By definition \eqref{zeta_def} and assumption \eqref{pass}, we have  
\begin{align*}
&\frac{1}{1 - \sigma} (- \frac{1}{2}+ \sigma) + \zeta_{r,\delta} + 1
\\
& \quad =
\frac{1}{1 - \sigma}
\left(
-\frac{n}{2}(\frac{1}{r} - \frac{1}{2}) + \frac{\delta}{2} + \sigma 
+ 1 - (p-1)\left(\frac{n}{2r} -\sigma \right)
\right)
\\
& \quad < 
\frac{1}{1 - \sigma}
\left(-\frac{n}{2}(\frac{1}{r} - \frac{1}{2}) + \frac{\delta}{2} + \sigma
\right).
\end{align*}
Hence,  \eqref{I2} holds also in the case $\zeta_{r,\delta} \ge -1$.  
Substituting \eqref{I1} and \eqref{I2} into \eqref{I1I2}, we obtain
\begin{equation*}
\begin{aligned}
\int_0^t \Norm{|\cdot|^\delta  
	\left( K_{1,low}(t - \tau,\cdot) * f(u(\tau,\cdot))\right) }{{2}}d\tau
\lesssim
\langle t \rangle ^{{ \frac{1}{1 - \sigma} \left(-\frac{n}{2}(\frac{1}{r} - \frac{1}{2}) + \frac{\delta}{2} + \sigma \right) }} \|u\|_{X}^p,
\end{aligned}
\end{equation*}
which together with \eqref{sol_high_weight} yields \eqref{conv_weight}.   	

The assumption \eqref{fass} implies 
$$
|f(u(\tau,x)) -  f(v(\tau,x))| \lesssim	(	|u(\tau,x)| + | v(\tau,x)|)^{p-1} |u(\tau,x)-  v(\tau,x)|,  
$$
and we can prove \eqref{conv_weight2} in the same way.  
\end{proof}
 
\begin{lemma} \label{f(u)der} 
Let $s  \in [0,\bar{s}]$.  
For every $u,v  \in X = X_{r,\delta,\bar{s}}$, we have  
\begin{align}
&	\int_0^t 
	\Norm{(- \Delta)^{\frac{s}{2}}  \left(K_{1}^+(t - \tau,\cdot) * f(u(\tau,\cdot))\right) }{{2}}d\tau
	\lesssim
	\langle t \rangle ^{{ \frac{1}{1 - \sigma} \left(-\frac{n}{2}(\frac{1}{r} - \frac{1}{2}) - \frac{s }{2} + \sigma \right) }} \|u\|_{X}^p. 
\label{conv+_der}
\\
&	\int_0^t 
\Norm{(- \Delta)^{\frac{s}{2}}  \left(K_{1}^-(t - \tau,\cdot) * f(u(\tau,\cdot))\right) }{{2}}d\tau
\nonumber \\
& 
\lesssim
\langle t \rangle ^{{ 
\max \{
	\frac{1}{ \sigma} \left(-\frac{n}{2}(\frac{1}{r} - \frac{1}{2}) - \frac{s }{2} + \sigma \right), 
		\frac{1}{1- \sigma}
		\left(-\frac{n}{2}(\frac{1}{r} - \frac{1}{2})- \frac{s }{2} + \sigma 
	- (p-1)(\frac{n}{2r} - \sigma) + 1							
			\right)
		\}		}} \|u\|_{X}^p
\label{conv-_der}
\\ 
&	\int_0^t 
\Norm{(- \Delta)^{\frac{s}{2}}  \left(K_{1}(t - \tau,\cdot) * f(u(\tau,\cdot))\right) }{{2}}d\tau
\lesssim
\langle t \rangle ^{{ \frac{1}{1 - \sigma} \left(-\frac{n}{2}(\frac{1}{r} - \frac{1}{2}) - \frac{s }{2} + \sigma \right) }} \|u\|_{X}^p. 
\label{conv_der}
\\
&\int_0^t 
\Norm{(- \Delta)^{\frac{s}{2}}  \left(K_{1}(t - \tau,\cdot) * (f(u(\tau,\cdot)) - f(v(\tau,\cdot)) ) \right) }{{2}}d\tau \nonumber
\\
& \qquad \qquad \lesssim
\langle t \rangle ^{{ \frac{1}{1 - \sigma} \left(-\frac{n}{2}(\frac{1}{r} - \frac{1}{2}) - \frac{s }{2} + \sigma \right) }} (\|u\|_{X} + \|v\|_{X})^{p-1} \|u - v\|_{X}. 
\label{conv_der2}
	\end{align}
for every $u,v  \in X$, 
\end{lemma}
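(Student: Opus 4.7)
The plan is to follow the template of the proof of Lemma~\ref{f(u)} (which handles the weighted analogue with $K_1$) while separately tracking the $K_1^+$ and $K_1^-$ contributions. I decompose $K_1 = K_1^+ + K_1^-$ via \eqref{K1+-def}, and further write each $K_1^\pm$ as the sum of a low-frequency part $\F^{-1}[\hat K_1^\pm \chi_{low}]$ and a mid-high-frequency part $\F^{-1}[\hat K_1^\pm \chi_{mh}]$. For each piece I estimate the kernel by the lemmas of Section~4, bound the nonlinear factor by Lemma~\ref{function}, and combine the two using the time-integral inequality \eqref{integ}.

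For the mid-high piece, the asymptotics \eqref{lambda_def2} show that $\mathrm{Re}\,\lambda_\pm \le -\varepsilon_\sigma$ on $\supp\chi_{mh}$, so the proofs of Lemmas~\ref{high} and~\ref{middle} extend to $\F^{-1}[\hat K_1^\pm\chi_{mh}]$ and give exponential decay $e^{-\varepsilon_\sigma(t-\tau)/2}$ paired with $\|(-\Delta)^{s/2}(1-\Delta)^{-1/2}f(u(\tau,\cdot))\|_2$. Inserting \eqref{nabla_beta_-} and using that the resulting $\langle\tau\rangle$-exponent is strictly less than $-1$ by \eqref{less-1}, the time integral reduces via \eqref{integ} to a bound that is absorbed by the target right-hand side.

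For the low-frequency pieces I apply \eqref{K1lowest+} (for \eqref{conv+_der}) and \eqref{K1lowest-} (for \eqref{conv-_der}) with parameters $\vartheta=s_2=0$, $s_1=s$. Mimicking the $I_1/I_2$ split from the proof of Lemma~\ref{f(u)}, I further decompose the $\tau$-integral as a sum $I_a+I_b$: in $I_a$ take $q_1=q_2=r$ and bound $\|f(u(\tau,\cdot))\|_r^\prime$ by \eqref{f(u)Lr} (for $r=1$) or \eqref{f(u)Lr2} (for $r>1$); in $I_b$ take $q_1=q_2=\tfrac{2n}{n+2}$ and use the Hardy--Littlewood--Sobolev-type estimate $\|f(u(\tau,\cdot))\|_{2n/(n+2)}\lesssim \langle\tau\rangle^{\zeta_{r,0}}\|u\|_X^p$ coming from \eqref{nonlinear1} at $s=0$. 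In both $I_a$ and $I_b$ the $\langle\tau\rangle$-exponent is less than $-1$ by \eqref{less-1}, so \eqref{integ} collapses to the $\min<-1$ branch and produces $\langle t\rangle^{\max}$ of the kernel exponent and the nonlinear exponent. For $K_1^+$ this yields \eqref{conv+_der} directly; for $K_1^-$ the slower rate $1/\sigma$ makes the two candidates in the max of \eqref{conv-_der} appear, the first coming from the kernel exponent and the second from the Hardy--Littlewood--Sobolev branch $I_b$ combined with $\eqref{K1lowest-}$. Equation \eqref{conv_der} then follows by summing \eqref{conv+_der} and \eqref{conv-_der} and using $\sigma<1-\sigma$ to absorb the $K_1^-$ term. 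Equation \eqref{conv_der2} is obtained by the same scheme after replacing $f(u)$ by $f(u)-f(v)$: the pointwise bound $|f(u)-f(v)|\lesssim(|u|+|v|)^{p-1}|u-v|$ from \eqref{fass} produces analogues of \eqref{f(u)Lr}--\eqref{nabla_beta_-} in which $\|u\|_X^p$ is replaced by $(\|u\|_X+\|v\|_X)^{p-1}\|u-v\|_X$, and the remainder of the argument is unchanged.

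The main obstacle is the bookkeeping of exponents. One must simultaneously meet the integrability conditions \eqref{qass2} on $(q_1,q_2,\vartheta,s_1,s_2)$, ensure that the $\langle\tau\rangle$-exponent from Lemma~\ref{function} is strictly less than $-1$ so that \eqref{integ} lands in the $\min<-1$ branch, and verify that the kernel exponent produced by Lemma~\ref{Kweight} is at least as large as the nonlinear one (so that the max is attained as stated). The most delicate case is \eqref{conv-_der}: since the rate is $1/\sigma>1/(1-\sigma)$ the kernel factor can decay faster than the nonlinear factor for some parameters but slower for others, and this is precisely what forces the two candidates inside the max in the statement.
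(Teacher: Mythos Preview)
There is a genuine gap in your treatment of the low-frequency piece. With the choice $\vartheta=s_2=0$, $s_1=s$ in \eqref{K1lowest+}, the kernel exponent is
\[
\rho \;=\; \frac{1}{1-\sigma}\Bigl(-\tfrac{n}{2}(\tfrac{1}{q}-\tfrac{1}{2})-\tfrac{s}{2}+\sigma\Bigr),
\]
and for $s$ close to $\bar{s}$ (or even for $s=1$ in low dimensions) one can easily have $\rho<-1$. When that happens, \eqref{integ} gives $\langle t\rangle^{\max\{\rho,\eta\}}$ with $\eta$ the nonlinear exponent; but the nonlinear exponents you obtain from \eqref{f(u)Lr}/\eqref{f(u)Lr2} or from \eqref{nonlinear1} at $s=0$ are \emph{independent of $s$}, so the $\max$ is $\eta$, not $\rho$, and you do not recover the $-\tfrac{s}{2}$ in the target bound. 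Concretely: take $n=3$, $r=1$, $\sigma=\tfrac14$, $p=2$, $s=2$. The target exponent is $-2$, your $I_a$ gives $\max\{-2,-\tfrac{4}{3}\}=-\tfrac{4}{3}$, and your $I_b$ gives $-\tfrac{5}{3}$; neither reaches $-2$. (Note also that the ``$I_1/I_2$ split'' of Lemma~\ref{f(u)} came from the two \emph{terms} in \eqref{K1lowweight}, which collapse to a single term once $\vartheta=0$; it is not an extra degree of freedom here.)

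The paper closes this gap by a time split $[0,t/2]\cup[t/2,t]$ for the low-frequency integral. On $[0,t/2]$ one uses your choice $s_2=0$, $q=r$ and factors out $\langle t-\tau\rangle^{\rho}\lesssim\langle t\rangle^{\rho}$ (valid for any $\rho\le 0$), then integrates $\langle\tau\rangle^{\eta}$ using \eqref{less-1}. On $[t/2,t]$ the paper instead shifts $[s]$ derivatives onto the nonlinearity by taking $s_2=[s]$ and $q_1=q_2=\tilde q_s$ in \eqref{K1lowest+}; this forces the kernel exponent to be $\frac{-1+2\sigma}{2(1-\sigma)}\in(-1,0)$, so the $\langle t-\tau\rangle$-factor is integrable, and the nonlinear factor $\|(-\Delta)^{[s]/2}f(u(\tau,\cdot))\|_{\tilde q_s}$ is controlled by \eqref{nonlinear1}, which now carries the $-\tfrac{s}{2}$. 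This derivative shift on the near-diagonal piece is the missing idea; once you add it, the rest of your outline (including the $K_1^-$ analysis and \eqref{conv_der2}) goes through as in the paper.
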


\begin{proof}
	We first prove \eqref{conv+_der}.  
		We divide the left-hand side of  \eqref{conv+_der} into three parts:
			\begin{equation}\label{conv_der1}
		\begin{aligned}
&\int_0^t \Norm{(- \Delta)^{\frac{s}{2}}  \left(K_{1}^+(t - \tau,\cdot) * f(u(\tau,\cdot))\right) }{{2}}d\tau
		\\
&=	\int_0^{t/2} \Norm{(- \Delta)^{\frac{s}{2}}  
	\left(\F^{-1}[\hat K_{1}^+(t - \tau,\cdot) \chi_{low }] (t - \tau,\cdot) * f(u(\tau,\cdot))\right) }{{2}}d\tau
\\
&\quad 	+
	\int_{t/2}^t \Norm{(- \Delta)^{\frac{s}{2}}  \left(\F^{-1}[\hat K_{1}^+(t - \tau,\cdot) \chi_{low }] (t - \tau,\cdot) * f(u(\tau,\cdot))\right) }{{2}}d\tau
	\\
&	\quad +	\int_0^t \Norm{(- \Delta)^{\frac{s}{2}}  
	\left(\F^{-1}[\hat K_{1}^+(t - \tau,\cdot) \chi_{mh}                                   ](t - \tau,\cdot) * f(u(\tau,\cdot))\right) }{{2}}d\tau
	\\
&	:= J_1^+ + J^+_2+ J^+_3 \;(\text{we put}). 
		\end{aligned}
	\end{equation}
Substituting \eqref{K1lowest+}  with $\vartheta = 0$, $q_1 = q_2 = r$, $s_1 = s $ and $s_2 = 0$ and \eqref{f(u)Lr} into $J_1^+$, and using \eqref{less-1}, we obtain
		\begin{equation}
		\begin{aligned}
		J_1^+ 
		&	\lesssim
			\langle t \rangle ^{{ \frac{1}{1 - \sigma} \left(-\frac{n}{2}(\frac{1}{r} - \frac{1}{2}) - \frac{s }{2} + \sigma \right) }} 
				\int_0^{t/2} 
		\langle \tau \rangle ^{{ \frac{1}{1 - \sigma} 
				\left( -\frac{n}{2}(\frac{p-1}{r}) + p \sigma  \right) }}d\tau 
								\|u\|_{X}^p
						\\
		\label{J1ineq}
&	\lesssim
		\langle t \rangle ^{{ \frac{1}{1 - \sigma} \left(-\frac{n}{2}(\frac{1}{r} - \frac{1}{2}) - \frac{s }{2} + \sigma \right) }}	\|u\|_{X}^p. 
		\end{aligned}
		\end{equation}
By \eqref{K1lowest+}  with $\vartheta = 0$, $s_1 = s, \; s_2 =[s ]$, $q_1 = q_2 = \tilde {q}_s $
(defined by \eqref{tildeqdef}),  and  \eqref{nonlinear1}, we have
\begin{align}
J_2^+ & \lesssim \int_{t/2}^{t}
\langle t - \tau  \rangle ^
{{ \frac{1}{2(1 - \sigma)} \left( - n (\frac{1}{\tilde q_s } - \frac{1}{2})  - s  + [s ] + 2 \sigma \right) }} 
	\Norm{(-\Delta)^{\frac{[s ]}{2}}   f(u(\tau,\cdot) )}{{{\tilde q_s } }}d\tau	
\nonumber\\
& \lesssim
\langle t \rangle ^
	{{	\frac{1}{1- \sigma}
			\left(
			(-\frac{n}{2r} + \sigma)p + \frac{n}{4} + \frac{1}{2}  - \frac{s }{2} 							
			\right)
		}}		
		\|u\|_{X}^p
		\int_0^{t/2} 
\langle \tau \rangle ^	
{{ \frac{1}{2(1 - \sigma)} \left( - 1 + 2 \sigma \right) }} d\tau
\nonumber \\
& \sim  \langle t \rangle ^
{{	\frac{1}{1- \sigma}
		\left(
		(-\frac{n}{2r} + \sigma)p + \frac{n}{4}  - \frac{s }{2} + 1 							
		\right)
	}}		
	\|u\|_{X}^p 
\nonumber\\
& 	= \langle t \rangle ^
	{{	\frac{1}{1- \sigma}
			\left(-\frac{n}{2}(\frac{1}{r} - \frac{1}{2})- \frac{s }{2} + \sigma 
			- (p-1)(\frac{n}{2r} - \sigma) + 1							
			\right)
		}}		
		\|u\|_{X}^p. 
\label{J2+}\end{align}
	Last we estimate $J^+_3$. 
Combining \eqref{high_der}, \eqref{mid_der1} and \eqref{nabla_beta_-}, we have
\begin{align}
J^+_3
& \lesssim
\int_0^t 
e^{-\vare_\sigma(t-\tau)}
\langle \tau  \rangle ^
{{
		\frac{1}{1 - \sigma} 
		\left( (-\frac{n}{2r}+\sigma)p + \frac{n}{4} + \frac{1}{2}  - \frac{s }{2}
		\right) 
	}} d \tau
	\|{u} \|_{X}^p 
\nonumber \\
& \lesssim \langle t \rangle ^
{{
		\frac{1}{1 - \sigma} 
		\left( (-\frac{n}{2r}+\sigma)p + \frac{n}{4} + \frac{1}{2}  - \frac{s }{2}
		\right) 
	}} 
		\|u\|_{X}^p
\nonumber\\
& 		= \langle t \rangle ^
		{{
				\frac{1}{1 - \sigma} 
				\left(-\frac{n}{2}(\frac{1}{r} - \frac{1}{2})+\sigma - \frac{s }{2}
			- (p-1)(\frac{n}{2r} - \sigma)	+ \frac{1}{2} 
				\right) 
			}} 
			\|u\|_{X}^p. 
\label{J3ineq}	
\end{align}
The assumption \eqref{pass} means $ - (p-1)(\frac{n}{2r} - \sigma) + 1 < 0$.  
Thus, \eqref{conv+_der} follows from \eqref{conv_der1} -- \eqref{J3ineq}. 

	We divide the left-hand side of  \eqref{conv-_der} into three parts:
		\begin{equation}\label{conv-_der1}
	\begin{aligned}
	&\int_0^t \Norm{(- \Delta)^{\frac{s}{2}}  \left(K_{1}^-(t - \tau,\cdot) * f(u(\tau,\cdot))\right) }{{2}}d\tau
	\\
	&=	\int_0^{t/2} \Norm{(- \Delta)^{\frac{s}{2}}  
		\left(\F^{-1}[\hat K_{1}^-(t - \tau,\cdot) \chi_{low }] (t - \tau,\cdot) * f(u(\tau,\cdot))\right) }{{2}}d\tau
	\\
	&\quad 	+
	\int_{t/2}^t \Norm{(- \Delta)^{\frac{s}{2}}  \left(\F^{-1}[\hat K_{1}^-(t - \tau,\cdot) \chi_{low }] (t - \tau,\cdot) * f(u(\tau,\cdot))\right) }{{2}}d\tau
	\\
	&	\quad +	\int_0^t \Norm{(- \Delta)^{\frac{s}{2}}  
		\left(\F^{-1}[\hat K_{1}^-(t - \tau,\cdot) \chi_{mh}                                   ](t - \tau,\cdot) * f(u(\tau,\cdot))\right) }{{2}}d\tau
	\\
	&	=: J^-_1 + J^-_2+ J^-_3 \;(\text{we put}). 
	\end{aligned}
	\end{equation}
	Substituting \eqref{K1lowest-}  with $\vartheta = 0$, $q_1 = q_2 = r$, $s_1 = s $ and $s_2 = 0$ and \eqref{f(u)Lr} into $J^-_1$, and using \eqref{less-1}, we obtain
	\begin{equation}
	\begin{aligned}
	J^-_1 
	&	\lesssim
	\langle t \rangle ^{{ \frac{1}{ \sigma} \left(-\frac{n}{2}(\frac{1}{r} - \frac{1}{2}) - \frac{s }{2} + \sigma \right) }} 
	\int_0^{t/2} 
	\langle \tau \rangle ^{{ \frac{1}{1 - \sigma} 
			\left( -\frac{n}{2}(\frac{p-1}{r}) + p \sigma  \right) }}d\tau 
	\|u\|_{X}^p
	\\
	\label{J1ineq-}
	&	\lesssim
	\langle t \rangle ^{{ \frac{1}{\sigma} \left(-\frac{n}{2}(\frac{1}{r} - \frac{1}{2}) - \frac{s }{2} + \sigma \right) }}	\|u\|_{X}^p. 
	\end{aligned}
	\end{equation}
Since $\sigma < 1-\sigma$, 
the right-hand side of \eqref{K1lowest-} is dominated by that of \eqref{K1lowest+}.  
Hence, $J_2^-$ and $J_3^-$ are estimated by the right-hand sides of \eqref{J2+} and \eqref{J3ineq}, respectively, and thus,
\begin{align}
\label{J2ineq-}
J^-_2  \lesssim
\langle t \rangle ^
	{{	\frac{1}{1- \sigma}
			\left(-\frac{n}{2}(\frac{1}{r} - \frac{1}{2})- \frac{s }{2} + \sigma 
			- (p-1)(\frac{n}{2r} - \sigma) + 1							
			\right)
		}}		
		\|u\|_{X}^p, 
		\\
 J^-_3 \lesssim
 \langle t \rangle ^
{{	\frac{1}{1- \sigma}
		\left(-\frac{n}{2}(\frac{1}{r} - \frac{1}{2})- \frac{s }{2} + \sigma 
		- (p-1)(\frac{n}{2r} - \sigma) + \frac{1}{2}							
		\right)
	}}		
	\|u\|_{X}^p. 		
\label{J3ineq-}
\end{align}  	
Substituting \eqref{J1ineq-}, \eqref{J2ineq-} and \eqref{J3ineq-} into \eqref{conv-_der1}, we obtain   \eqref{conv-_der}.  

Inequality \eqref{conv_der} follows from \eqref{conv+_der} and \eqref{conv-_der}, since $\sigma < 1 -\sigma$ and  
$- (p-1)(\frac{n}{2r} - \sigma) + 1	< 0$.  
		
By using the assumption \eqref{fass}, we can prove \eqref{conv_der2} in the same way.  
	\end{proof}

              \subsection{Diffusion estimate}
	
\begin{lemma} \label{f(u)derdiff2} 
		Let $\delta$ and $\nu$ be an arbitrary number satisfying the assumption of 
	Theorem \ref{thmdiff}.  
Let $X = X_{1,\delta,\bar{s}}$, where $X_{1,\delta,\bar{s}}$ is defined by \eqref{Xdef}.  
Then we have 
\begin{equation}
	\begin{aligned}
\Big\|
			&	\int_0^t 	
K_1(t-\tau,\cdot)		* f(u(\tau,\cdot)) d\tau
 -  G_{\sigma}(t,\cdot) 
		\int_0^\infty \int_{\R^n}f(u(\tau,y))dy d\tau		
\Big\|_{{2}}
\\	
& \lesssim
t 
^{{\max \{
			 \frac{1}{1 - \sigma} \left(-\frac{n}{4} + \sigma  
		- \min \{( p - 1)\left(\frac{n}{2} 
			\right)  - 1, 1 - 2 \sigma, \nu \} 
		\right),  
	\frac{1}{\sigma}		 \left(-\frac{n}{4} +  \sigma \right) 
	\} }}
\|u\|_{X}^p. 
\label{heat+}	
\end{aligned}
\end{equation}
\end{lemma}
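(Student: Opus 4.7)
The plan is to mirror the proof of the linear diffusion estimate (Lemma \ref{lemma_lin_diff}, in particular \eqref{I2est}) applied time-slice-wise under the Duhamel integral, handling tail contributions via direct decay estimates. Set $F(\tau) := \int_{\R^n} f(u(\tau,y))\,dy$ and decompose
\begin{align*}
&\int_0^t K_1(t-\tau,\cdot)\ast f(u(\tau,\cdot))\,d\tau - G_\sigma(t,\cdot)\int_0^\infty F(\tau)\,d\tau \\
&\quad= \int_0^{t/2}\!\bigl[K_1(t-\tau,\cdot)\ast f(u(\tau,\cdot)) - F(\tau)\,G_\sigma(t-\tau,\cdot)\bigr]\,d\tau \\
&\qquad{}+ \int_0^{t/2}\! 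F(\tau)\bigl[G_\sigma(t-\tau,\cdot)-G_\sigma(t,\cdot)\bigr]\,d\tau \\
&\qquad{}+ \int_{t/2}^t K_1(t-\tau,\cdot)\ast f(u(\tau,\cdot))\,d\tau - G_\sigma(t,\cdot)\int_{t/2}^\infty F(\tau)\,d\tau,
\end{align*}
call the four terms $I_a, I_b, I_c, I_d$, and estimate each in $L^2$ separately.

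For $I_a$, apply \eqref{I2est} slice-wise with $u_1 = f(u(\tau,\cdot))$ and integrate in $\tau\in[0,t/2]$. The three input norms are controlled via \eqref{fass}: one uses $\|f(u(\tau))\|_1 \lesssim \|u(\tau)\|_p^p$ from \eqref{f(u)Lr}, the exponentially small $H^{-2\sigma}$-term is handled via Sobolev embedding and \eqref{nabla_beta_-}, and the weighted contribution $\||\cdot|^\theta f(u(\tau))\|_1$ is treated by H\"older interpolation between $\||\cdot|^\delta u(\tau)\|_2$ and suitable $L^q$-norms of $u$. The admissible range \eqref{nudef}--\eqref{nudef2} for $\nu$ (with \eqref{nudef2} treating the subcase $\bar{s}<n/2$ via \eqref{u_q}) is precisely what makes this interpolation succeed. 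The $\tau$-integration is controlled by \eqref{integ}, with convergence guaranteed by the criticality \eqref{less-1}. This piece yields the $\nu$ and $1-2\sigma$ contributions to the minimum in \eqref{heat+}; the outer $\max$ in \eqref{heat+} is inherited from the analogous $\max$ in \eqref{I2est}, whose $\frac{1}{\sigma}$ alternative arises from the $K_1^-$ kernel via \eqref{K1lowest-}.

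For $I_b$, write $G_\sigma(t-\tau,\cdot)-G_\sigma(t,\cdot) = -\int_{t-\tau}^t \partial_s G_\sigma(s,\cdot)\,ds$; a Plancherel computation paralleling \eqref{Gest02} gives $\|\partial_s G_\sigma(s,\cdot)\|_2 \sim s^{\frac{1}{1-\sigma}(-\frac{n}{4}+\sigma)-1}$, so for $\tau\in[0,t/2]$ one obtains $\|G_\sigma(t-\tau,\cdot)-G_\sigma(t,\cdot)\|_2 \lesssim \tau\,t^{\frac{1}{1-\sigma}(-n/4+\sigma)-1}$. Pairing with $|F(\tau)|\lesssim \|u(\tau)\|_p^p$ and using \eqref{integ} together with \eqref{less-1}, the gain over the bare $\|G_\sigma(t)\|_2$ rate is $t^{-(1-2\sigma)/(1-\sigma)}$, reproducing the $1-2\sigma$ contribution. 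For $I_c$, repeat the argument of Lemma \ref{f(u)der} on the shortened domain $[t/2,t]$ (invoking \eqref{K1lowest+}, \eqref{K1lowest-}, and the high-frequency bounds \eqref{high_der}, \eqref{mid_der1}), where the restriction together with \eqref{less-1} yields the tail improvement. For $I_d$,
\[
\|I_d\|_2 \le \|G_\sigma(t,\cdot)\|_2\!\int_{t/2}^\infty \|f(u(\tau))\|_1\,d\tau \lesssim t^{\frac{1}{1-\sigma}(-\frac{n}{4}+\sigma)}\,t^{\frac{1}{1-\sigma}(-(p-1)(\frac{n}{2}-\sigma)+1)}\|u\|_X^p,
\]
via \eqref{f(u)Lr} and \eqref{less-1}; this is the main source of the $(p-1)\frac{n}{2}-1$ contribution.

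The hardest step is the careful bookkeeping for $I_a$: choosing an admissible weight exponent $\theta\in[0,1]$ in \eqref{I2est} matching the prescribed $\nu$, verifying that H\"older interpolation applied to $\||\cdot|^\theta f(u(\tau))\|_1$ is consistent with both the weighted $L^2$ control on $u$ provided by $\|\cdot\|_X$ and the range of $L^q$-norms admissible in \eqref{u_q}, and ensuring that the $\tau$-integration does not degrade the dominant exponent beyond the bound stated in \eqref{heat+}.
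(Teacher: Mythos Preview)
Your proposal is correct and follows essentially the same route as the paper: the decomposition $I_a,I_b,I_c,I_d$ coincides with the paper's $L_2,L_3,L_1,L_4$ (in that order), and each piece is handled by the same tools---slice-wise application of \eqref{I2est} with H\"older interpolation for the weighted input (this is the paper's $L_{2,3}$), the time-derivative bound on $G_\sigma$ for $I_b$, the $[t/2,t]$ portion of Lemma~\ref{f(u)der} for $I_c$, and the tail estimate via \eqref{Gsigmadecay} and \eqref{f(u)Lr} for $I_d$. The only cosmetic difference is that for $I_c$ the paper chooses $q_1=q_2=\tfrac{2n}{n+4\sigma}$ in \eqref{K1lowweight} rather than invoking the $J_2^\pm$ estimates directly, but the resulting exponent is identical.
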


\begin{proof}
We have
	\begin{align}\label{L1234}
&\Big\|\int_0^t 
		K_1(t -\tau,\cdot) * f(u(\tau,\cdot)) d\tau			- G_{\sigma}(t,\cdot)\int_0^\infty \int_{\R^n}f(u(\tau,y))dy d\tau	\Big\|_2 
\nonumber 
\\
&\quad \le L_1 + L_2 + L_3 + L_4,
\end{align}
where
\begin{align*}
L_1	&:=  \int_{\frac{t}{2}}^{t}	
			\|K_1(t-\tau,\cdot)*f(u(\tau,\cdot)) \|_2 d\tau, 
			\\
L_{2} &:= \int_0^{t/2} 
\left\|K_1(t-\tau,\cdot) * f(u(\tau,\cdot))
  - G_{\sigma}(t - \tau,\cdot) \int_{\R^n} f(u(\tau,y))dy \right\|_2 d\tau,
\\
L_3 &:= \int_0^{t/2} 
\Big\|	G_{\sigma}(t-\tau,\cdot) -  G_{\sigma}(t,\cdot)
\Big\|_2
\left|\int_{\R^n} f(u(\tau,y))dy \right|
 d\tau, 
\\
L_4 &:= 
	\| G_{\sigma}(t,\cdot) \|_2 
				\left|\int_{t/2}^\infty \int_{\R^n}f(u(\tau,y))dy d\tau \right|. 
									\end{align*}
				
				First we estimate $L_1$ by dividing the integrand as
\begin{align*}
\| K_1(t-\tau,\cdot)*f(u(\tau,\cdot)) \|_2 
&= \|K_{1,low}(t-\tau,\cdot)* f(u(\tau,\cdot)) \|_2 
\\
& \quad  + \|K_{1,mh}(t-\tau,\cdot) * f(u(\tau,\cdot)) \|_2.  
\end{align*}		
Taking $q_1 = q_2 = \frac{2n}{n + 4 \sigma}$
and $s_1 = s_2 = \vartheta = 0$ in \eqref{K1lowweight}, we obtain 
\begin{equation}
\| K_{1,low}(t-\tau, \cdot)* f(u(\tau,\cdot)) \|_2 \lesssim \|f(u(\tau,\cdot))\|_{\frac{2n}{n + 4 \sigma}}
\lesssim \|u(\tau,\cdot)\|_{{\frac{2np}{n + 4 \sigma}}}^p. 
\label{L1lowest}\end{equation}
Since $\bar{s} \ge 1$, the assumption \eqref{pass}  implies 
$$
\frac{2np}{n + 4 \sigma}  > \frac{2n}{n + 2\delta }, 
$$
and \eqref{pass2} implies 
$$
\frac{2np}{n + 4 \sigma} \le \frac{2n}{n - 2 \bar{s}} 
$$
if $2 \bar{s} < n$. 
Thus, we can apply \eqref{u_q} with $r = 1$ and 
$q = \frac{2np}{n + 4 \sigma}$ to obtain
\begin{equation*}
\begin{aligned}
\|u(\tau,\cdot)\|_{\frac{2np}{n + 4 \sigma}}^p
&\lesssim 
\langle \tau \rangle ^{{ \frac{1}{1 - \sigma} 
		\left(
		-\frac{n}{2}( p - \frac{1}{2} - \frac{2 \sigma}{n}) +  p \sigma 
		\right)	
	}} \|u\|_{X}^p
\\
&=	\langle \tau \rangle ^{{ \frac{1}{1 - \sigma} 
			\left(
					- ( p - 1)\left(\frac{n}{2} -  \sigma \right)  - \frac{n}{4} +2 \sigma
			\right)	
		}}\|u\|_{X}^p. 
\end{aligned}\end{equation*}
From the inequality above and \eqref{L1lowest}, it follows that
\begin{equation}\begin{aligned}
\label{L1lowest2}
&\int_{\frac{t}{2}}^{t}	
\|K_{1,low}(t - \tau,\cdot)*f(u(\tau,\cdot)) \|_2 d\tau
\lesssim  
\langle t \rangle ^{{ \frac{1}{1 - \sigma} 
		\left(
		-\frac{n}{4} +  \sigma 
		- ( p - 1)\left(\frac{n}{2} - \sigma \right)  + 1
		\right)	
	}}\|u\|_{X}^p. 
\end{aligned}	\end{equation}	
	By \eqref{J3ineq} and \eqref{J3ineq-} with $r = 1$ and $s = 0$, we have 
	\begin{align*}
\int_{\frac{t}{2}}^{t}	
\|K_{1,mh} (t-\tau, \cdot)*f(u(\tau,\cdot)) \|_2 d\tau
		&	\lesssim
		\langle t \rangle ^
		{{
				\frac{1}{1 - \sigma} 
				\left(-\frac{n}{4}+\sigma
				+ \frac{1}{2} - (p-1)(\frac{n}{2} - \sigma)
				\right) 
			}} 
			\|u\|_{X}^p,
					\end{align*}
	which together with \eqref{L1lowest2} yields
	\begin{equation}\label{L1est}
L_1 \lesssim  
	\langle t \rangle ^{{ \frac{1}{1 - \sigma} 
			\left(
			-\frac{n}{4} +  \sigma 
			- ( p - 1)\left(\frac{n}{2} - \sigma \right)  + 1
			\right)	
		}}\|u\|_{X}^p. 
		\end{equation}		

By \eqref{I2est} with $u_1 =f(u(\tau,\cdot))$ and 
$\theta = 2 \tilde \nu$, we have
\begin{equation}
	\begin{aligned}
	L_2 \lesssim 
& \langle t \rangle^{{
			\max \{
			\frac{1}{1 - \sigma}\left(-\frac{n}{4} +  3\sigma - 1 \right),
			\frac{1}{\sigma}		 \left(-\frac{n}{4} +  \sigma \right) 
			\} }}
\int_0^{t/2}  \| f(u(\tau,\cdot)) \|_{1} d\tau
	\\
	&  + e^{{-\frac{\vare_\sigma t}{2}}} \int_0^{t/2}
	\| f(u(\tau,\cdot)) \|_{{H^{-2 \sigma}}} d\tau
\\
&	+	 	 t^{ \frac{1}{1-\sigma}(-\frac{n}{4} + \sigma - \tilde \nu )}
	\int_0^{t/2} \| |\cdot|^{2 \tilde \nu} f(u(\tau,\cdot)) \|_{{{1}}} d\tau
		\\
 =: & L_{2,1} + L_{2,2} + L_{2,3} \: (\text{we put}).  			\end{aligned}
\end{equation}
Inequality \eqref{f(u)Lr} with $r = 1$ and \eqref{less-1} yield 
\begin{align*}
\int_0^{t/2} \Norm{  f(u(\tau,\cdot)) }{1} d\tau
&\lesssim  	
\int_0^{t/2}\langle \tau \rangle
^{{ \frac{1}{1 - \sigma} 
		\left( -\frac{n}{2}(p-1) + p \sigma  \right) }}d \tau
\|u\|_{X}^p
 \lesssim  \|u\|_{X}^p. 
\end{align*}
Thus
\begin{equation}\label{L21}
L_{2,1} \lesssim \langle t \rangle^{{
		\max \{
		\frac{1}{1 - \sigma}\left(-\frac{n}{4} +  3\sigma - 1 \right),
		\frac{1}{\sigma}		 \left(-\frac{n}{4} +  \sigma \right) 
		\} }}\|u\|_{X}^p. 
\end{equation}
Since $\bar{s} \ge 1$, \eqref{pass2} and \eqref{delta2} with $r = 1$ imply  
$2p  > \frac{2n}{n + 2\delta }$ and $2p \le \frac{2n}{n - 2 \bar{s}}$ if $2 \bar{s} < n$.   
Hence, we can use \eqref{u_q} with $r = 1$ and $q = 2p$ to obtain 
\begin{equation*}
\begin{aligned}
\| f(u(\tau,\cdot)) \|_{{H^{-2 \sigma}}}
& \lesssim \|u(t,\cdot)\|_{2p}^p \lesssim
\langle t \rangle ^{{\frac{p}{1 - \sigma} \left(-\frac{n}{2}(1 - \frac{1}{2p}) +  \sigma 	\right) }}\|u\|_{X}^p \lesssim \|u\|_{X}^p. 
\end{aligned}	
\end{equation*}
Thus
\begin{equation}\label{L22}
L_{2,2} 
\le 
e^{{-\vare_\sigma t/2}} \|u\|_{X}^p. 
\end{equation}
We estimate $L_{2,3}$.  
Let $\tilde{\nu}$ be an arbitrary number satisfying
\begin{equation}
\label{nu'def}
0 < 
\nu  < \tilde{\nu} < 
\min  
\big \{\frac{n}{4}(p-2) + \frac{1}{2}p \delta, 
\delta 
\big\}.   
\end{equation}
Assume moreover that 
\begin{equation}\label{nu'def2}
\tilde{\nu} \le \frac{\delta}{2 \bar{s}}(n - \frac{p}{2}(n - 2 \bar{s})), 
\end{equation}
if $\bar{s} < \frac{n}{2}$. 
By H\"{o}lder's inequality, we have 
\begin{equation}
\begin{aligned}
\left\|| \cdot |^{\tilde{2 \nu}}  f(u(\tau,\cdot)) \right\|_1 
\lesssim \||\cdot|^{\frac{2 \tilde{\nu}}{p} }u(\tau,\cdot) \|_p^p 
& \le 
\|(|\cdot|^\delta |u(\tau,\cdot)|)^{\frac{2 \tilde \nu}{p\delta}}\|_{\frac{p\delta}{\tilde{\nu}}}^p   \;
\||u(\tau,\cdot)|^{1 -\frac{2 \tilde \nu}{p\delta}} \|_q^p
\\
&
= \| |\cdot|^\delta u(\tau,\cdot)\|_{2}^{\frac{2 \tilde \nu}{\delta}} \;
\|u(\tau,\cdot)\|_{\tilde q}^{p -\frac{2 \tilde \nu}{\delta}},
\label{xnu/pu} \end{aligned}\end{equation}
where $q = \frac{ p \delta}{ \delta - \tilde{\nu}}$ and 
$\tilde q =  q(1 -\frac{2 \tilde \nu}{p\delta})$. 
The assumption \eqref{nu'def} implies
$$
\tilde q = q (1 -\frac{2\tilde{\nu}}{p\delta})= \frac{p \delta -2 \tilde{\nu}}{ \delta - \tilde{\nu}} 
> \frac{2n}{n + 2\delta }. 
$$ 
In fact, the condition 
$
\tilde{\nu} < \frac{n}{4}(p-2) + \frac{p \delta}{2} 
$
is equivalent to 
$
\tilde q = \frac{p \delta - 2 \tilde{\nu}}{ \delta - \tilde{\nu}} 
> \frac{2n}{n + 2\delta }
$. 
The condition \eqref{nu'def2} is equivalent to 
$
\tilde q= \frac{p \delta - 2 \tilde{\nu}}{ \delta - \tilde{\nu}} \le \frac{2n}{n - 2 \bar{s}}
$ in the case $n > 2 \bar{s}$.   
Hence, using \eqref{u_q} with taking $q$ as $\tilde q$, and definition of $\|\cdot \|_X$ with $r = 1$ (see \eqref{Xnormdef}) in the right-hand side of \eqref{xnu/pu}, we obtain
\begin{equation}\begin{aligned}
\left\|| \cdot |^{\tilde{2 \nu}}  f(u(\tau,\cdot)) \right\|_1 
& \lesssim 
\langle \tau \rangle ^
{{\frac{1}{1 - \sigma} 
		\left(-\frac{n}{2}(1 - \frac{1}{2}) +  \sigma + \frac{\delta}{2} \right) 
		\frac{2 \tilde \nu}{\delta}	
}}
\langle \tau \rangle ^{{\frac{1}{1 - \sigma} 
		\left(-\frac{n}{2}(1 - \frac{1}{\tilde q})  + \sigma \right) 
		(p -\frac{2 \tilde \nu}{\delta})
}}
\|u\|_X^p
\\
&= \langle \tau \rangle 
^{{\frac{1}{1 - \sigma} \left((-\frac{n}{2} + \sigma )p 
		+ \frac{n}{2} + \tilde \nu \right)}}
\|{u}\|_{X}^p.  \;
\label{gest2} \end{aligned}\end{equation}
Thus,
\begin{equation*}
L_{2,3} \lesssim 
\langle t \rangle^{\frac{1}{1-\sigma}(-\frac{n}{4} + \sigma - \tilde \nu)} \; 
\int_0^{t/2}\langle \tau \rangle ^{{\frac{1}{1 - \sigma} 
		\left( \left(-\frac{n}{2} + \sigma \right)p + \frac{n}{2} + \tilde \nu\right)}} 
d\tau
\|{u}\|_{X}^p,
\end{equation*}
which yields
\begin{align}
L_{2,3}  
&\lesssim 
\begin{cases}
\langle t \rangle ^
{{ \frac{1}{1 - \sigma} 
		\left(
		-\frac{n}{4} +  \sigma 
		- ( p - 1)\left(\frac{n}{2} - \sigma \right)  + 1
		\right)	
}}
&\text{if} \quad  
\frac{1}{1 - \sigma} \left(-\frac{n}{2}
(p-1) + p \sigma + \tilde \nu\right) > -1, 
\nonumber	\\
\langle t \rangle ^{{ \frac{1}{1 - \sigma} \left(-\frac{n}{4} + \sigma - \tilde \nu\right) }} 
\log (\langle t \rangle + 1)
&\text{if} \quad 
\frac{1}{1 - \sigma} \left(-\frac{n}{2}
(p-1) + p \sigma + \tilde \nu\right) = -1,
\nonumber	\\
\langle t \rangle ^{{ \frac{1}{1 - \sigma} \left(-\frac{n}{4} + \sigma - \tilde \nu\right) }} 
&\text{if} \quad 
\frac{1}{1 - \sigma} \left(-\frac{n}{2}
(p-1) + p \sigma + \tilde \nu\right)< -1. 
\end{cases}
\nonumber	\\
& \lesssim
\langle t \rangle ^{{ \frac{1}{1 - \sigma} \left(-\frac{n}{4} + \sigma 
		- \min \{( p - 1)\left(\frac{n}{2} - \sigma \right)  - 1, \nu \} 
		\right) }}.  
\label{L23}
\end{align}
Inequalities \eqref{L21}, \eqref{L22} and \eqref{L23} yield
\begin{equation}\label{L2est}
L_{2} \lesssim \langle t \rangle^{{
		\max \{
	\frac{1}{1 - \sigma} 
	\left(-\frac{n}{4} + \sigma 
			- \min \{1 - 2\sigma, ( p - 1)\left(\frac{n}{2} - \sigma \right)  - 1, \nu \} 
			\right),\;
		\frac{1}{\sigma}		 \left(-\frac{n}{4} +  \sigma \right) 
	\} 	
	}}
	\|u\|_{X}^p. 
\end{equation}

We estimate $L_3$.  
By the definition of $G_\sigma$,
\begin{align*}
\F( G_{\sigma}(t-\tau,\cdot) -  G_{\sigma}(t,\cdot))
 & =
 \tau  \int_0^{1}\spd{\hat G_{\sigma}}{t}(t- \theta \tau, \xi) 	d \theta 
\\
& =   \tau \int_0^{1} |\xi|^{2(1-2\sigma) } e^{- |\xi|^{2(1 - \sigma)} (t- \theta \tau)}
 d \theta. 
\end{align*}
Inequality \eqref{Gest02} with $\theta = 2 - 2 \sigma$ means
\begin{equation}
\begin{aligned}
 \left\| |\xi|^{2(1-2\sigma) } e^{- |\xi|^{2(1 - \sigma)} (t- \theta \tau)} \right\|_2 
& \sim  
 t^{{ \frac{1}{1 - \sigma} \left(-\frac{n}{4} -  1 + 2 \sigma \right) }} 
 \end{aligned}
\end{equation}
uniformly to $\theta \in [0,1]$ and $\tau \in [0,t/2]$.  
Then   by using \eqref{f(u)Lr} with $r = 1$ together, we have
\begin{equation}
	\begin{aligned}
	L_3
&\lesssim 	t^{{ \frac{1}{1 - \sigma} \left(-\frac{n}{4} -  1 + 2 \sigma \right) }} 
\int_0^{t/2} \tau \|f(u(\tau,\cdot))\|_1 d\tau
			\\
	&	\lesssim  
	t^{{ \frac{1}{1 - \sigma} 
	\left(-\frac{n}{4} -  1 + 2 \sigma 
	\right) }} 
	\int_0^{t/2} \tau
	\langle \tau \rangle ^{{ \frac{1}{1 - \sigma} 
			\left(-\frac{n}{2}
			(p-1) + p \sigma \right) }}
	d\tau, 		
		\end{aligned}
\end{equation}
which yields
\begin{align}
L_3 
&\lesssim  
\begin{cases}
{t}^
 {{ \frac{1}{1 - \sigma} 
 		\left(
 	-\frac{n}{4} +  \sigma 
 		- ( p - 1)\left(\frac{n}{2} - \sigma \right)  + 1
       \right)	
    }}
&\text{if} \quad 
\frac{1}{1 - \sigma} \left(-\frac{n}{2}
(p-1) + p \sigma \right) > -2
\\
{t}^{{ \frac{1}{1 - \sigma} \left(-\frac{n}{4} -  1 + 2 \sigma \right) }} 
\log (\langle t \rangle + 1)
\quad 
&\text{if} \quad 
\frac{1}{1 - \sigma} \left(-\frac{n}{2}
	(p-1) + p \sigma \right) = -2
\\
{t}^{{ \frac{1}{1 - \sigma} \left(-\frac{n}{4}  -1 + 2 \sigma \right) }} 
\quad 
&\text{if} \quad 
\frac{1}{1 - \sigma} \left(-\frac{n}{2}
 (p-1) + p \sigma \right)< - 2. 
\end{cases}
\nonumber \\
&\le 
{t}^{{ \frac{1}{1 - \sigma} \left(-\frac{n}{4} + \sigma - 
\min \{	 ( p - 1)\left(\frac{n}{2} - \sigma \right) - 1, 1 - 2 \sigma \}
				\right) }}. 		
\label{L3est} \end{align}

Last we estimate $L_4$.  
Since $n \ge 2$, the assumption \eqref{delta} implies
$$
\frac{2n}{n + 2\delta} \le \frac{2n}{2n - 2} = 1 + \frac{1}{n-1} \le 1 + \frac{2}{n - 2\sigma},
$$
which together with \eqref{pass} yields $p > \frac{2n}{n + 2\delta}$. 
By this fact and assumption \eqref{pass2}, we can apply \eqref{u_q} with 
$r = 1$ and $q = p$ to obtain
$$
\| u(\tau,\cdot) \|_p \lesssim \langle \tau \rangle ^{{\frac{1}{1 - \sigma} \left(-\frac{n}{2}(1 - \frac{1}{p})  + \sigma \right) }}\|{u}\|_{X}.  
$$
This together with \eqref{less-1} yields
\begin{align}\label{heat+2}
\int_{t/2}^\infty \left|\int_{\R^n}f(u(\tau,y))dy  \right| d\tau
&\lesssim 
\int_{t/2}^\infty \langle \tau \rangle ^{{\frac{1}{1 - \sigma} \left(-\frac{n}{2}(p - 1)  + \sigma p \right) }} d\tau \|{u}\|_{X}^p
\nonumber \\
&\sim  \langle t \rangle ^{{\frac{1}{1 - \sigma} \left(-(\frac{n}{2} - \sigma)(p - 1)  + 1 \right) }}
\|{u}\|_{X}^p. 
\end{align} 
Taking the product of \eqref{Gsigmadecay} and \eqref{heat+2}, we obtain
\begin{equation}
L_4 \lesssim 
{t}^{{ \frac{1}{1 - \sigma} 
		\left(
		-\frac{n}{4} +  \sigma 
		- ( p - 1)\left(\frac{n}{2} - \sigma \right)  + 1
		\right)	
	}}. 
\label{L4est}
\end{equation}	
Substituting  \eqref{L1est}, \eqref{L2est}, \eqref{L3est} and \eqref{L4est} 
into \eqref{L1234}, we obtain \eqref{heat+}.  
\end{proof}

\section{Proof of Proposition and Theorems}

\subsection{Proof of Proposition \ref{existence}}

%\begin{proof}[Proof of Proposition \ref{existence}]
Let $\vare> 0$, and 
$$
X(\varepsilon) = X_{r,\delta, \bar{s}}(\vare) 
:= \{ \varphi \in X_{r,\delta, \bar{s}}; \| \varphi \|_{{X_{r,\delta, \bar{s}}}} < \vare \},
$$
where $X_{r,\delta, \bar{s}}$ and $\| \varphi \|_{{X_{r,\delta, \bar{s}}}}$ are defined by \eqref{Xdef} and \eqref{Xnormdef}, respectively.  
We put $X = X_{r,\delta, \bar{s}}$ and $\|\cdot\|_X =\| \varphi \|_{{X_{r,\delta, \bar{s}}}}$,   throughout this subsection.   

If $u$ is a solution of \eqref{NW}, then Duhamel's principles implies 
\begin{align*}
u(t,x)=K_0(t,\cdot) \ast u_0 +K_1(t,\cdot) \ast u_1 + \int _0 ^t K_1(t- \tau,\cdot)\ast f(u( \tau,\cdot) ) d\tau, 
\end{align*}	                    
where $K_0$ and $K_1$ are defined by \eqref{K0def} and \eqref{K1def}. 
Taking account of the formula above,
we define the mapping $\Phi$ on $X(\vare)$ by
\begin{equation}
\label{Phidef}
	(\Phi u)(t) := K_0(t,\cdot)\ast u_0+K_1(t,\cdot)\ast u_1 
	+ \int_0^t K_1(t - \tau,\cdot)* f(u(\tau,\cdot)) d \tau. 
\end{equation}
	We prove that $\Phi$ is a contraction mapping on $X(\vare)$ provided $\vare$ and initial data are sufficiently small.   
		
First we estimate $K_1(t,\dot)*u_1$. 
By \eqref{delta}, we see that the assumption \eqref{qass2} of Lemma \ref{Kweight} is satisfied for    
$\vartheta = 0$ and $\delta$, $s_1 = s_2  = 0$, $q_1 = r$ and 
$q_2 = \frac{nr}{n - r \vartheta}(\in [r,2))$ (that is, $\frac{1}{q_2} = \frac{1}{r} - \frac{\vartheta}{n}$).   
Then, \eqref{K1lowweight}                                
gives  estimate of low frequency part.  
The high and middle frequency parts are given by \eqref{high_weight} and  \eqref{mid_weight1}.   
	Then we have
		\begin{equation}
	\begin{aligned}
\Norm{\langle \cdot \rangle^\delta  \left(K_{1}(t,\cdot)*u_1 \right) }{{2}}
& 	\lesssim
	\langle t \rangle ^{{ \frac{1}{1 - \sigma} 
			\left(
			-\frac{n}{2}(\frac{1}{r} - \frac{1}{2}) + \frac{\delta}{2} + \sigma \right) }} 	
	(\|u_1 \|_{r}^\prime + 
	\| \langle \cdot \rangle^\delta u_1 \|_{{{{ \frac{nr}{n - r \delta}, 2}} }})
\\
&	\qquad + 	e^{-\vare_{\sigma} t} 
	\|(1 - \Delta)^{-\frac{1}{2}}
           \langle \cdot \rangle^\delta u_1 \|_{{{{2}} }}. 
	\end{aligned}
	\label{u11weight1}
	\end{equation}			
	Assumption \eqref{delta} implies  
	$
	\frac{ n - r \delta}{r} - 1 \le \frac{n}{2}
	$, 
	and therefore, sharp Sobolev's embedding theorem (Lemma E) yields 
	\begin{equation*}
		\|(1 - \Delta)^{-\frac{1}{2}} \langle \cdot \rangle^\delta  u_1 \|_{{2}} 
	\le	\| \langle \cdot \rangle^\delta  u_1 \|_{{{{ \frac{nr}{n - r \delta}},2 }}}.  
	\end{equation*}
	Substituting this inequality into \eqref{u11weight1}, we obtain
			\begin{equation}
	\begin{aligned}
	&	\Norm{\langle \cdot \rangle^\delta  \left(K_{1}(t,\cdot)*u_1 \right) }{{2}}
	\lesssim
	\langle t \rangle ^{{ \frac{1}{1 - \sigma} 
			\left(
			-\frac{n}{2}(\frac{1}{r} - \frac{1}{2}) + \frac{\delta}{2} + \sigma \right) }} 	
	(\|u_1 \|_{r}^\prime + 
	\| \langle \cdot \rangle^\delta u_1 \|_{{ \frac{nr}{n - r \delta}, 2}} ). 
	\end{aligned}
	\label{u11weight}
	\end{equation}	
Inequality \eqref{K1lowweight} with $\vartheta = 0$, $s_1 = \bar{s}$, $s_2 = 0$,  
	$q_1 =q_2 = r$, and inequalities  \eqref{high_der} and \eqref{mid_der1}  with $s = \bar{s}$ yield
		\begin{equation}\label{K1u1s0}
		\begin{aligned}
		&\Norm{(-\Delta)^{\frac{\bar{s}}{2}}  \left(K_{1}(t,\cdot) * u_1 \right) }{{2}}
		\\
		&\lesssim
			\langle t \rangle ^{{ \frac{1}{1 - \sigma} 
				\left(
				-\frac{n}{2}(\frac{1}{r} - \frac{1}{2}) - \frac{\bar{s}}{2} + 
				\sigma \right) }} 	
			\|u_1\|_{{r}}^\prime 
			+ 	
		e^{-\vare_{\sigma} t} 
		\|  (-\Delta)^{\frac{\bar{s}}{2}} (1 - \Delta)^{-\frac{1}{2}}  u_1 \|_{{2}}. 	
						\end{aligned}
			\end{equation}

Next we estimate $K_0(t,\dot)*u_0$. 
By \eqref{delta}, we see that the assumption  of Lemma \ref{Kweight} is satisfied for    
$$
\vartheta   = 0 \;\text{and}\; \delta, \quad s_1 = s_2 = 0, \quad
q_3 = \frac{nr}{n - 2 r \sigma}, \quad 
q_4 = \frac{nr}{n - r(\vartheta + 2\sigma)}. 
$$
Then, \eqref{low0weight}                      
gives estimate of low frequency part.  
We estimate high and middle frequency parts by \eqref{high0_weight} and \eqref{mid0_weight1}.  
Then  we obtain
							\begin{equation*}
							\begin{aligned}
							&\Norm{\langle \cdot \rangle^\delta  \left(K_{0}(t,\cdot) *u_0 \right) }{{2}}
\\
				&	\lesssim
	\langle t \rangle ^{{ \frac{1}{1 - \sigma} 
			\left(-\frac{n}{2}(\frac{1}{r} - \frac{1}{2}) 
			+ \frac{\delta}{2} + \sigma \right) }} 	
					(\|u_0 \|_{{ \frac{nr}{n -2 r \sigma}, 2}}	+ 											
			 \|\langle \cdot \rangle^\delta u_0 \|_{{{{\frac{nr}{n - r(\delta + 2\sigma)},2}} }})
							+							
					e^{-\vare_{\sigma} t} 
				\| \langle \cdot \rangle^\delta  	u_0 \|_{{{{2}} }}. 
							\end{aligned}
													\end{equation*}		
By Corollary A, we have
\begin{equation}\label{weightdeosaeru0}
\|u_0 \|_{{ \frac{nr}{n -2 r \sigma},2 }}
\lesssim \| | x |^{-\delta}\|_{{{\frac{n}{\delta}, \infty} }}
\|\langle \cdot \rangle^\delta u_0 \|_{{{{\frac{nr}{n - r(\delta + 2\sigma)},2}} }}
\lesssim  
\|\langle \cdot \rangle^\delta u_0 \|_{{{{\frac{nr}{n - r(\delta + 2\sigma)},2}} }}. 
\end{equation}	
Hence, we have
			\begin{equation}
			\begin{aligned}
			&\Norm{\langle \cdot \rangle^\delta  \left(K_{0}(t,\cdot) *u_0 \right) }{{2}}
			\\
			&	\lesssim
			\langle t \rangle ^{{ \frac{1}{1 - \sigma} 
					\left(-\frac{n}{2}(\frac{1}{r} - \frac{1}{2}) 
					+ \frac{\delta}{2} + \sigma \right) }} 	
			\|\langle \cdot \rangle^\delta u_0 \|_{{{{\frac{nr}{n - r(\delta + 2\sigma)},2}} }}
					+							
			e^{-\vare_{\sigma} t} 
			\| \langle \cdot \rangle^\delta  	u_0 \|_{{{{2}} }}. 
			\end{aligned}
			\label{u0weight}
			\end{equation}		
	Inequality \eqref{low0weight} with $\vartheta = 0$, $s_1 = \bar{s}$, $s_2 = 0$ and 
	$q_3 = q_4 =\frac{nr}{n - 2 \sigma r}$ and inequalities \eqref{high0_der} and \eqref{mid0_der1} with $s = \bar{s}$ imply
				\begin{equation}
					\begin{aligned}
&		\Norm{(-\Delta)^{\frac{\bar{s}}{2}}  \left(K_{0}(t,\cdot) * u_0 \right) }{{2}}
\\
&		\lesssim
			\langle t \rangle ^{{ \frac{1}{1 - \sigma} 
					\left(-\frac{n}{2}(\frac{1}{r} - \frac{1}{2}) - \frac{\bar{s}}{2} + \sigma  \right) }}
		\Norm{u_0}{{{\frac{nr}{n - 2 \sigma r} },2 }}
								+ 	
								e^{-\vare_{\sigma} t} \|  u_0 \|_{{H^{\bar{s}} }}	
								\\
&\lesssim
\langle t \rangle ^{{ \frac{1}{1 - \sigma} 
		\left(-\frac{n}{2}(\frac{1}{r} - \frac{1}{2}) - \frac{\bar{s}}{2} + \sigma  \right) }}
	\|\langle \cdot \rangle^\delta u_0 \|_{{{{\frac{nr}{n - r(\delta + 2\sigma)},2}} }}
+ 	
e^{-\vare_{\sigma} t} \|  u_0 \|_{{H^{\bar{s}} }}. 							
								\end{aligned}								
								\label{u0der}
								\end{equation}
For the last inequality, we used \eqref{weightdeosaeru0}.  

 By \eqref{Phidef}, \eqref{u11weight},  \eqref{K1u1s0}, \eqref{u0weight}, \eqref{u0der},  \eqref{conv_weight}  and \eqref{conv_der} with $s  = 0,\bar{s}$, we have
\begin{equation}\label{Phiest}
\begin{aligned}
\|\Phi u \|_X 
& \lesssim  
\|\langle \cdot \rangle^\delta u_0 \|_{{{{\frac{nr}{n - r(\delta + 2\sigma)}},2} }}
+ \|\langle \cdot \rangle^\delta u_0 \|_{{{{2}} }}
+  \|  u_0 \|_{{H^{\bar{s}} }}
\\
& \quad + 
	\|u_1\|_{r}^\prime + 
\|\langle \cdot \rangle^\delta u_1 \|_{{{{ \frac{nr}{n - r \delta},2}} }}
+ \|  (-\Delta)^{\frac{\bar{s}}{2}} (1 - \Delta)^{-\frac{1}{2}}  u_1 \|_{{2}} 
+  \|u \|_X^p.  
\end{aligned}
\end{equation}	

(i) First we consider the case $r = 1$.  
By \eqref{Phiest}, there is a positive constant $C_1$ independent of initial data such that
\begin{equation*}
\begin{aligned}
\|\Phi u \|_X 
& \le C_1 
\Big(
\|\langle \cdot \rangle^\delta u_0 \|_{{\frac{n}{n - (\delta + 2\sigma)}, 2}} 
+ \|\langle \cdot \rangle^\delta u_0 \|_{{{{2}} }}
+  \|  u_0 \|_{{H^{\bar{s}} }}
\\
& \quad + 
\|u_1\|_{{1}}
+ \|\langle \cdot \rangle^\delta u_1 \|_{{ \frac{n}{n -  \delta},2 }}
+ \|  (-\Delta)^{\frac{\bar{s}}{2}} (1 - \Delta)^{-\frac{1}{2}}  u_1 \|_{{2}} 
+  \|u \|_X^p \Big). 	
\end{aligned}
\end{equation*}	
Hence, taking $\vare_1 > 0$ such that $C_1 \vare_1^{p-1} \le \frac{1}{2}$, and assuming $u_0$ and $u_1$ satisfy
\begin{equation*}
\begin{aligned}
C_1  
&\Big(
\|\langle \cdot \rangle^\delta u_0 \|_{{\frac{n}{n - (\delta + 2\sigma)},2 }}
+ \|\langle \cdot \rangle^\delta u_0 \|_{2} 
\| (-\Delta)^{\frac{\bar{s}}{2}} u_0 \|_{{2}}
\\
& \quad + 
\|u_1\|_{{{1}}}
+ \|\langle \cdot \rangle^\delta u_1 \|_{{ \frac{n}{n -  \delta}, 2}}
+ \|  (-\Delta)^{\frac{\bar{s}}{2}} (1 - \Delta)^{-\frac{1}{2}}  u_1 \|_{{2}} 
\Big)
\le \frac{\vare_1}{2}, 
\end{aligned}
\end{equation*}		
$\Phi$ becomes a mapping $X(\vare_1)$ to $X(\vare_1)$.  

By \eqref{conv_weight2}   and \eqref{conv_der2}, there is a positive constant $C_2$ independent of initial data such that 
\begin{equation}\label{Phiu-v}
\|\Phi u - \Phi v \|_X \le C_2 (\|u \|_X + \|v \|_X)^{p-1} \|u - v \|_X
\end{equation}
for every $u, v \in X$.  
Thus, by taking $\vare \in (0,\vare_1)$ such that 
$$
C_2  (2 \vare)^{p-1} < 1,
$$
$\Phi$ becomes a contraction mapping from $X(\vare)$ to $X(\vare)$,  
and therefore $\Phi$ has the only one fixed point $u$, which is the unique solution.  

(ii) Next we consider the case $r \in (1,\frac{2n}{n + 2 \sigma}]$.  	
By Corollary A, we have
\begin{equation*}
\|u_1 \|_{{{r}}}^\prime = \|u_1 \|_{{{r,2}}} 
\lesssim \| | x |^{-\delta}\|_{{{\frac{n}{\delta}, \infty} }}
\| \langle \cdot \rangle^\delta u_1 \|_{{{{ \frac{nr}{n - r \delta},2}} }}
\lesssim \| \langle \cdot \rangle^\delta u_1 \|_{{{{ \frac{nr}{n - r \delta}},2} }}. 
\end{equation*}	
Substituting this inequality into 
  \eqref{Phiest}, we obtain 
  				  \begin{equation}\label{Phi_est}
				 \begin{aligned}
	\|\Phi u \|_X 
	& \le C_3 
	\Big(
 \|\langle \cdot \rangle^\delta u_0 \|_{{{{\frac{nr}{n - r(\delta + 2\sigma)}},2} }}
	+ \|\langle \cdot \rangle^\delta u_0 \|_{{{{2}} }}
	+  \|  u_0 \|_{{H^{\bar{s}} }}
	\\
	& \quad + 
\|\langle \cdot \rangle^\delta u_1 \|_{{{{ \frac{nr}{n - r \delta},2}} }}
	+ \|  (-\Delta)^{\frac{\bar{s}}{2}} (1 - \Delta)^{-\frac{1}{2}}  u_1 \|_{{2}} 
	+  \|u \|_X^p \Big),	
	 \end{aligned}
	 \end{equation}	
for a positive constant $C_3$ independent of initial data.  
		Hence, taking $\vare_2 > 0$ such that $C_3 \vare_2^{p-1} \le \frac{1}{2}$, and assuming $u_0$ and $u_1$ satisfy
\begin{equation*}
\begin{aligned}
C_3	&\Big(
	\|\langle \cdot \rangle^\delta u_0 \|_{{{{\frac{nr}{n - r(\delta + 2\sigma)}},2} }}
	+ \|\langle \cdot \rangle^\delta u_0 \|_{{{{2}} }}
	+  \|  u_0 \|_{{H^{\bar{s}} }}
	\\
	& \quad \quad + 
	\|\langle \cdot \rangle^\delta u_1 \|_{{{{ \frac{nr}{n - r \delta},2}} }}
	+ \|  (-\Delta)^{\frac{\bar{s}}{2}} (1 - \Delta)^{-\frac{1}{2}}  u_1 \|_{{2}}
	\Big)
\le \frac{\vare_2}{2}, 
\end{aligned}
\end{equation*}		
	$\Phi$ becomes a mapping from  $X(\vare_2)$ to $X(\vare_2)$.  
	In the same way as \eqref{Phiu-v}, 
there is a positive constant $C_4$ independent of initial data such that 
	\begin{equation*}
	 \|\Phi u - \Phi v \|_X \le C_4 (\|u \| + \|v \|)^{p-1} \|u - v \|_X
	\end{equation*}
	for every $u, v \in X$.  
	Thus, taking $\vare \in (0,\vare_2)$ such that 
	$
	C_4  (2 \vare)^{p-1} < 1
	$, 
	$\Phi$ becomes a contraction mapping $X(\vare)$ to $X(\vare)$,  
	and therefore $\Phi$ has the only one fixed point $u$, which is the unique solution.

\subsection{Proof of Theorems 2 and 3}
	
\begin{proof}[Proof of Theorems \ref{existence2}]
	
	We prove Theorem \ref{existence2} by reducing it to Proposition \ref{existence}.  
  
In the case $p_\sigma < p \le 1 + \frac{4}{n}$, 
we can define $r$ by
\begin{equation}\label{rdef}
\frac{1}{r} = \frac{2}{n} \left(\frac{1}{p-1} + \sigma \right) + \eta
\end{equation}
satisfying 
\begin{equation}\label{rass}
\frac{1}{2} + \frac{2 \sigma}{n} < \frac{1}{r} < 1, \quad \text{that is}, \quad r \in (1, \frac{2n}{n + 4 \sigma}),
\end{equation}
 if $\eta >0$ is sufficiently small.   
Then $\frac{1}{r} > \frac{2}{n} \left(\frac{1}{p-1} + \sigma \right) $ means the condition \eqref{pass}.  
  
(Case 1) First we consider the case $p_\sigma < p \le  1 + \frac{4}{n + 2 - 4 \sigma}$.   
We define $r$ by \eqref{rdef} satisfying \eqref{rass}, and we put
  \begin{equation}\label{deltaprimedef}
  \delta^\prime= 2 \left(\frac{1}{p-1} + \sigma \right) - \frac{n}{2} - 1 + n \eta,
    \end{equation}
    which is equal to 
    \begin{equation}\label{deltaprime1}
    \delta^\prime= n \left(\frac{1}{r} - \frac{1}{2} \right) - 1, 
    \end{equation}
  by the definition \eqref{rdef}.  
Comparing \eqref{deltaass1} and \eqref{deltaprimedef}, we see that
\begin{equation}\label{deltaprime}
\delta^\prime \le \delta 
\end{equation}
if $\eta >0$ is sufficiently small.   
Hence, taking $\eta > 0$ sufficiently small, we can assume that $r$ and $\delta^\prime$ defined above satisfy \eqref{rass} and \eqref{deltaprime}.    
We check that the conditions  \eqref{delta} and \eqref{delta2} of Proposition \ref{existence} are satisfied 
with $\delta$ replaced $\delta^\prime$.  
Since $2 \sigma < 1$, \eqref{delta} is trivial by \eqref{deltaprime1}.  
Since $n \ge 2$ and $2 \sigma < 1$, we have
$$
p \le  1 + \frac{4}{n + 2 - 4 \sigma} < 1 + \frac{1}{1 - 2\sigma} = \frac{2 - 2\sigma}{1 - 2\sigma},  
$$
from which it follows that 
$$
2\left(\frac{1}{p - 1} + \sigma \right) - 1 > \frac{2}{p} \left(\frac{1}{p - 1} + \sigma \right). 
$$
From this and the definition of $\delta^\prime$ and $r$, it follows that
\begin{equation}
\delta^\prime= 2 \left(\frac{1}{p-1} + \sigma \right) -1 - \frac{n}{2} + n \eta
> \frac{2}{p} \left(\frac{1}{p - 1} + \sigma \right) + n \eta - \frac{n}{2}
= n\left( \frac{1}{pr} - \frac{1}{2} \right), 
\end{equation}
that is, \eqref{delta2} is satisfied with $\delta$ replaced by $\delta^\prime$.  
Hence the assumption of Proposition \ref{existence} is satisfied.  
Let $\hat q_j$ $(j = 0,1)$ be the constants defined by \eqref{hat_qdef} with $\delta = \delta^\prime$ and $r$ defined above.  
Then 
\begin{equation}
\frac{1}{\hat q_0   } = \frac{n - r(\delta^\prime + 2 \sigma)}{nr} = \frac{n + 2 - 4 \sigma}{2 n},
\quad 
\frac{1}{\hat q_1   } = \frac{n - r \delta^\prime}{nr} = \frac{n+2}{2n},
\end{equation}
that is, $\hat q_j    = q_j$ $(j = 0,1)$.  
Since $\delta^\prime \le \delta$, the conditions \eqref{initial1} implies \eqref{initialass_r} with $\delta$ replaced by $\delta^\prime$.  
Thus, Proposition \ref{existence} guarantees the existence of the solution $u \in C^1([0,\infty), H^{\bar{s}}) \cap C([0,\infty), H^{\bar{s} - 1})$ if $\vare$ is sufficiently small.  
By the standard argument, the uniqueness holds in the class 
$C^1([0,\infty), H^{\bar{s}}) \cap C([0,\infty), H^{\bar{s} - 1})$.  

(Case 2-1) Next we consider the case 
\begin{equation}\label{pass3}
1 + \frac{4}{n + 2 - 4\sigma} < p \le 1 + \frac{4}{n}. 
\end{equation} 
We define $r$ by \eqref{rdef} satisfying \eqref{rass}.  
We show that $\delta = 0$ satisfies the conditions  \eqref{delta} and \eqref{delta2}, that is, 
\begin{align}
&  2 \left(\frac{1}{p-1} + \sigma \right) + n \eta  - \frac{n}{2} - 1
=	n(\frac{1}{r} - \frac{1}{2}) - 1 
\le 0 
< n(\frac{1}{r} - \frac{1}{2}) - 2\sigma, 
\label{cond3}
\\
& \frac{2}{p}\left(\frac{1}{p-1} + \sigma \right) + \frac{n \eta}{p} - \frac{n}{2}
=  	n(\frac{1}{pr} - \frac{1}{2}) \le 0,
\label{cond4}
\end{align}
if $\eta > 0$ is sufficiently small.  

The condition \eqref{rass} implies $n(\frac{1}{r} - \frac{1}{2}) - 2\sigma > 0$.   The assumption 
\\
$1~+~\frac{4}{n + 2 - 4\sigma} < p$ is equivalent to 
\begin{equation}
  2 \left(\frac{1}{p-1} + \sigma \right)  - \frac{n}{2} - 1 < 0. 
  \end{equation}
  Hence \eqref{cond3} holds if $\eta$ is sufficiently small.   

From the assumption that $p \ge 1 + \frac{4}{n + 2 - 4 \sigma}$, it follows that
\begin{align*}
 \frac{2}{p}\left(\frac{1}{p-1} + \sigma \right) - \frac{n}{2} 
&\le \frac{2(n + 2 - 4\sigma)}{n + 6 - 4\sigma}\left(\frac{n + 2 - 4\sigma}{4} + \sigma \right) - \frac{n}{2} 
\\
&=  -\frac{n+ 4 \sigma - 2}{n + 6 - 4\sigma} < 0,
\end{align*}
and thus the condition \eqref{cond4} holds if $\eta > 0$ is sufficiently small.   
Hence the assumption of Proposition \ref{existence} is satisfied with $\delta = 0$.  
Let $\hat q_j$ $(j = 0,1)$ be the constants defined by \eqref{hat_qdef} with $\delta = 0$, and $r$ be defined by \eqref{rdef} and \eqref{rass}.  
Then  
\begin{equation}
\frac{1}{\hat q_0   } = \frac{1}{r} - \frac{2 \sigma}{n} = \frac{2}{ n(p-1)} + \eta,
\quad 
\frac{1}{\hat q_1   } = \frac{1}{r} =  = \frac{2}{n(p-1)} + \frac{2 \sigma}{n} + \eta. 
\end{equation}
Then by the assumption of $q_j$ ($j = 0,1$), we have
\begin{equation}
\label{qeta_ass}
q_j < \hat q_j    \quad (j = 0,1),
\end{equation}
if $\eta > 0$ is sufficiently small.  
Since $p < 1 + \frac{4}{n}$, 
\begin{equation}\label{qj2}
\frac{1}{\hat q_j} > \frac{2}{n(p-1)} > \frac{1}{2}, \quad \text{that is,} \quad \hat q_j < 2 \quad (j = 0,1). 
\end{equation}
By \eqref{qeta_ass} and \eqref{qj2}, 
$$
L_{\hat q_j,2   } \subset L_{q_j,2} \cap L_{2}.  
$$
Hence \eqref{initial2} implies \eqref{initialass_r}. 
Thus the conclusion holds by Proposition \ref{existence} in the same way as above.   
	
(Case 2-2) Last we consider the case $p \ge 1 + \frac{4}{n}$.  	
We define $r$ by
		\begin{equation}\label{rdef2}
	\frac{1}{r} = \frac{1}{2} + \frac{2\sigma}{n} + \eta \quad (\eta > 0).  
		\end{equation} 
	Since $2 \sigma < 1$,  \eqref{delta} holds for $\delta = 0$ if $\eta  > 0$ is sufficiently small.  
The condition
$$\
n(\frac{1}{pr} - \frac{1}{2}) = \frac{n}{p}\left(\frac{1}{2} + \frac{2\sigma}{n} + \eta \right) - \frac{n}{2} < 0
$$
is equivalent to
$$
1 + \frac{4\sigma}{n} + 2 \eta < p,
$$
which holds if $\eta > 0$ is sufficiently small, since $\sigma < 1$ and $p \ge 1 + \frac{4}{n}$.   
Hence, defining $r$ by \eqref{rdef2} with 
sufficiently small $\eta > 0$, we can take $\delta = 0$ in Proposition \ref{existence}.  
Let $\hat q_j$ $(j = 0,1)$ be the constants defined by \eqref{hat_qdef} with $\delta = 0$ and $r$ defined above.  Then, considering the asumption of $q_j$ ($j = 0,1$), we see that
\begin{equation}
\frac{1}{\hat q_0   } = \frac{1}{r} - \frac{2 \sigma}{n} = \frac{1}{2} + \eta < \frac{1}{q_0},
\quad 
\frac{1}{\hat q_1   } = \frac{1}{r} = \frac{1}{2} + \frac{2 \sigma}{n} + \eta < \frac{1}{q_1},  
\end{equation}
if $\eta > 0$ is sufficiently small.   
This imply that $q_j < \hat q_j < 2$ $(j = 0,1$).  
Hence \eqref{initial3} implies \eqref{initialass_r} with $\delta = 0$, and the conclusion holds by Proposition \ref{existence}.  
\end{proof}
	
\begin{proof}[Proof of Theorem \ref{thmdiff}]	
Since $u =\Phi u$ ($\Phi$ is defined by \eqref{Phidef}), we can write
\begin{align}
&u(t,\cdot) - \varTheta G_{\sigma}(t,x) 
=
\left( K_0(t,\cdot)\ast u_0
+
K_1(t,\cdot)\ast u_1 
- \vartheta_1 G_{\sigma}(t,x) 
\right)
\nonumber \\
&+ \left(\int_0^t K_1(t - \tau,\cdot)* f(u(\tau,\cdot)) d \tau
	- G_{\sigma}(t,x)
		\int_0^\infty \int_{\R^n}f(u(\tau,y))dy d\tau
	\right),
\label{I1I2I3} \end{align}
where $\vartheta_1$ is defined by \eqref{theta1def}.  
Since $K_0(t,\cdot)\ast u_0 + K_1(t,\cdot)\ast u_1$ is a solution of the linear equation \eqref{LW}, 
the first term of the right-hand side of \eqref{I1I2I3} is estimated by Theorem \ref{thm_lin_diff}.   
The second term is estimated in \eqref{heat+}.  
Combining these estimates, we obtain the assertion. 
\end{proof}

\end{document}